\documentclass[reqno,a4paper,12pt]{amsart} 

\usepackage{amsmath,amscd,amsfonts,amssymb}
\usepackage{mathrsfs,dsfont}

\allowdisplaybreaks[3]

\numberwithin{equation}{section}
\numberwithin{figure}{section}

\newcommand\R{\mathbb{R}}

\newcommand\Z{\mathbb{Z}}

\newcommand\G{\mathcal{G}}

\newcommand\del{\delta}
\newcommand\Del{\Delta}
\newcommand\lam{\lambda}
\newcommand\Lam{\Lambda}

\newcommand\Om{\Omega}

\newcommand\1{\mathds{1}}
\newcommand\eps{\varepsilon}
\newcommand{\pphi}{\varphi}

\newcommand{\PP}{\mathbb{P}}
\newcommand{\E}{\mathbb{E}}

\newcommand{\TTT}{T}
\newcommand{\SSS}{S}
\newcommand{\ts}{\tau}

\newcommand{\gtsp}{g_{ts}^{\phantom{*}}}
\newcommand{\gtss}{g_{ts}^*}

\renewcommand\le{\leqslant}
\renewcommand\ge{\geqslant}
\renewcommand\leq{\leqslant}

\newcommand\sbt{\subset}

\newcommand{\ft}[1]{\widehat{#1}}
\newcommand{\dotprod}[2]{\langle #1 , #2 \rangle}

\newcommand{\supp}{\operatorname{supp}}

\newcommand{\cspan}{\overline{\operatorname{span}}}

\theoremstyle{plain}
\newtheorem{thm}{Theorem}[section]
\newtheorem{lem}[thm]{Lemma}
\newtheorem{lemma}[thm]{Lemma}

\newtheorem*{claim*}{Claim}

\newcommand{\thmref}[1]{Theorem~\ref{#1}}
\newcommand{\secref}[1]{Section~\ref{#1}}

\newcommand{\lemref}[1]{Lemma~\ref{#1}}

\theoremstyle{definition}
\newtheorem{definition}[thm]{Definition}
\newtheorem*{definition*}{Definition}
\newtheorem*{remarks*}{Remarks}
\newtheorem*{remark*}{Remark}

\newenvironment{enumerate-alph}
{\begin{enumerate}
\addtolength{\itemsep}{5pt}
}
{\end{enumerate}}

\newenvironment{enumerate-num}
{\begin{enumerate}
\addtolength{\itemsep}{5pt}
}
{\end{enumerate}}

\newenvironment{enumerate-text}
{\begin{enumerate}
\addtolength{\itemsep}{5pt}
}
{\end{enumerate}}

\begin{document}

\title{Gabor unconditional bases and frames in $L^p(\R)$}

\author{Nir Lev}
\address{Department of Mathematics, Bar-Ilan University, Ramat-Gan 5290002, Israel}
\email{levnir@math.biu.ac.il}

\author{Anton Tselishchev}
\address{St. Petersburg Department of Steklov Mathematical Institute, Fontanka 27, St. Petersburg 191023, Russia}
\email{celis\_anton@pdmi.ras.ru}

\date{May 17, 2026}
\subjclass[2020]{42C15, 46B15, 46E30}
\keywords{Gabor systems, unconditional bases, Schauder frames}
\thanks{Research supported by ISF Grant No.\ 854/25 and the Foundation for the Advancement of Theoretical Physics and Mathematics ``BASIS''}

\begin{abstract}
We consider the following problem: given a set $\Lambda \subset \mathbb{R} \times \mathbb{R}$ and $p \neq 2$, does there exist a function $g \in L^p(\mathbb{R})$ such that the Gabor system $\{g(x-t) e^{2\pi i s x}\}$, $(t,s) \in \Lambda$, consisting of time--frequency shifts of $g$, forms an unconditional basis or unconditional Schauder frame in the space $L^p(\mathbb{R})$? We completely resolve this question for $p>2$; in particular, we characterize the sets $\Lambda$ such that an unconditional Schauder frame of this form exists. We also prove a Balian-Low type result, showing that the window function $g$ cannot enjoy mild continuity and decay conditions. For $1<p<2$, we prove that a Gabor system cannot form an unconditional basis or unconditional Schauder frame in $L^p(\mathbb{R})$ if the set $\Lambda$ satisfies a natural separation condition.
\end{abstract}

\maketitle

% =======================================

\section{Introduction}

\subsection{}

For a function $g$ on the real line $\R$ and a countable set $\Lambda\subset\R \times \R$, the \emph{Gabor system} generated by $g$ and $\Lambda$ is the system of functions 
\begin{equation}\label{eq:Gabordef}
\G(g;\Lambda)=\{ g(x-t)e^{2\pi i s x} : (t,s)\in\Lambda\}.
\end{equation}
The function $g$ is often called a \emph{window function}, and the elements of the system are called \emph{time--frequency shifts} of the function $g$.

Gabor systems are one of the main objects of study in time--frequency analysis. In particular, one of the most important questions is whether Gabor systems can form good coordinate systems in various function spaces.

Recall that a system of vectors $\{u_n\}$ in a separable Hilbert space $H$ 
is called a \emph{Riesz basis} if it can be obtained as the image of an orthonormal basis 
under a bounded and invertible linear operator. This condition implies that 
(i) every element $x\in H$ admits a unique series expansion
$x=\sum_{n}c_n u_n$, and this series converges
unconditionally, i.e.\ it converges for any rearrangement of its terms; and
(ii) there exist positive constants $A,B$ such that $A \le \|u_n\| \le B$ for every $n$.
Moreover, it is known that properties (i) and (ii) characterize the Riesz bases $\{u_n\}$ in $H$ 
(see e.g. \cite[Lemma 3.6.9]{Chr16}).

It is well known that Gabor systems can form Riesz bases (or even orthonormal bases) in the space $L^2(\R)$, the basic example being $\G(\1_{[0,1]};\Z\times\Z)$. On the other hand, the classical Balian--Low theorem states that if the function $g$ satisfies sufficient smoothness and decay conditions, then $\G(g; \Z \times \Z)$ cannot form a Riesz basis in the space $L^2(\R)$. There exist various versions of this result; in particular, if $\G(g; \Z\times \Z)$ is a Riesz basis in the space $L^2(\R)$ then $g$ cannot be simultaneously continuous and belong to the 
Wiener amalgam space $W(L^\infty, \ell^1)$, see \cite[Theorem 11.33]{Hei11}.

The Balian--Low theorem is one of the motivations to study more general coordinate systems formed by time--frequency shifts of a single function. A system of vectors $\{u_n\}$  in a separable Hilbert space $H$ 
is called a \emph{frame} (in the sense of  Duffin and Schaeffer \cite{DS52})
if there exist positive constants $A,B$  such that the inequalities
\begin{equation}\label{eq:frame_ineq}
A \|x\|^2 \le \sum_n |\langle x, u_n \rangle|^2\le B\|x\|^2
\end{equation}
hold for every $x\in H$. In this case there exists another frame
$\{v_n\}\subset H$ such that 
\begin{equation}\label{eq:frame_convergence}
x=\sum_{n} \dotprod{x}{v_n}u_n 
\end{equation}
for every $x \in H$,
and the series converges unconditionally
(see \cite[Section 4.7]{You01}).

We note that if
$\{u_n\}$  is a Riesz basis in $H$ then it is a frame, but in general
a frame need not be a Riesz basis and the
system $\{v_n\}$ satisfying  \eqref{eq:frame_convergence}
need not be unique.

If a system $\G(g; \Lambda)$ is a frame in the space $L^2(\R)$, then, naturally, it is called a Gabor frame. There are numerous results on Gabor frames. We do not attempt to survey the most recent results here and refer the reader to \cite[Chapter 11]{Hei11} and \cite[Chapters 11--13]{Chr16} for some classical background on this subject.

\subsection{}
There are several ways to define analogues of Riesz bases and frames in the Banach space setting, see e.g. \cite[Chapter 24]{Chr16}. If we are interested only in series expansion of vectors with respect to a coordinate system, but not in estimates similar in some sense to \eqref{eq:frame_ineq}, then the most natural notions are unconditional bases and unconditional Schauder frames.

A system $\{u_n\}_{n=1}^{\infty}$ in a separable Banach space $X$ is called a \emph{Schauder basis} if every $x\in X$ admits a unique series expansion $x = \sum_{n=1}^{\infty} c_nu_n$. If this series moreover converges unconditionally for every $x \in X$, then $\{u_n\}$ is called an \emph{unconditional basis}. The most well-known example of an unconditional basis in the space $L^p[0,1]$, $1 < p < \infty$, is the Haar basis (see \cite[Theorem 6.1.7]{AK16}). The system $\{u_n\}$ is called an \emph{unconditional basic sequence} if it is an unconditional basis for its closed linear span.

If  $X$ is a separable Banach space with dual space $X^*$, then a system of elements
$\{(u_n, u_n^*)\}_{n=1}^{\infty} \sbt X\times X^*$ is called a \emph{Schauder frame} if every $x\in X$ admits a series expansion given by
\begin{equation}
\label{eq:R1.1}
x=\sum_{n=1}^{\infty} u_n^*(x) u_n.
\end{equation}
If the series \eqref{eq:R1.1} 
converges unconditionally for every $x \in X$, then
$\{(u_n, u_n^*)\}$
is called an \emph{unconditional Schauder frame}.

Note that if $\{u_n\}$ is an
unconditional basis in $X$, then there exists a unique system 
of biorthogonal  coefficient functionals
$\{u_n^*\} \sbt X^*$
such that \eqref{eq:R1.1} holds for every $x \in X$
(see \cite[Section 1.1]{AK16}). In this sense, every 
unconditional basis is an unconditional Schauder frame.
However, in general the coefficient functionals
$\{u_n^*\}$ need not be unique
and they are not necessarily biorthogonal to $\{u_n\}$. 

\subsection{}
It is possible for a Gabor system to form a (not unconditional) Schauder basis in the space $L^p(\R)$, $p\neq 2$; for example, the system $\G(\1_{[0,1]}; \Z\times\Z)$ can be ordered to become a Schauder basis in the space $L^p(\R)$, $1 < p < \infty$. It is not known whether a Schauder basis in $L^p(\R)$, $1<p<\infty$, may be formed by only translates of a single function (see \cite[Problem 4.4]{OSSZ11}). However it is known that a system of translates can form a Schauder frame in $L^p(\R)$, see \cite[Section 4]{FPT21}, \cite{LT26a}, \cite{LT26b} for some recent results on this topic.

On the other hand, $L^p(\R)$ spaces for $p\neq 2$ are considered not the right spaces to search for unconditionally convergent Gabor expansions, see \cite[Section 24.5]{Chr16} or \cite{GH01}. But as we will see below, the situation is in fact more complicated, and the study of Gabor systems forming an unconditional basic sequence or an unconditional Schauder frame in $L^p(\R)$ spaces gives rise to several non-trivial results.

The present paper is devoted to the study of unconditional Gabor frames, bases and basic sequences in $L^p(\R)$ spaces for $p\neq 2$. We consider only values $p > 1$ since in the space $L^1(\R)$ 
there are no unconditional Schauder frames, see e.g. \cite[Section 4.3]{BC20}.

There are not many results showing that systems arising in time--frequency analysis cannot form unconditional bases in $L^p(\R)$ spaces, $p\neq 2$. We mention \cite[Theorem 2]{FGW92} which states that Wilson bases are not unconditional in $L^p(\R)$. We do not know any results of this kind for unconditional Schauder frames. See also \cite{GH01}, \cite{GL01} for some related results on Gabor expansions in $L^p(\R)$ spaces.

\subsection{}
Gabor systems are formed by translations and modulations of a single function. But there are several nontrivial questions already about unconditional Schauder bases and frames formed only by translates of a single function in $L^p(\R)$. It  is known that a system consisting of translates cannot form an unconditional basis in any of the spaces $L^p(\mathbb{R})$ \cite{OZ92}, \cite{OSSZ11}, \cite{FOSZ14}.

Some other nontrivial results about unconditional basic sequences and Schauder frames formed by translates of a single function can be found in \cite{OSSZ11}, \cite{FOSZ14}, \cite{BC20}, \cite{LT25a}. Specifically, in \cite{FOSZ14} it was shown that unconditional Schauder frames of translates exist in $L^p(\R)$ if $p > 2$ (in particular, this gives examples of uncon\-ditional Gabor frames in these spaces). On the other hand, we proved in \cite{LT25a} that for $p\le 2$ there are no unconditional Schauder frames of translates in $L^p(\R)$.

Another question closely related to the topic of the present paper is the problem about existence of unconditional Schauder frames of exponentials in the space $L^p[0,1]$ which was solved recently in \cite{LT25b}. While it has long been known that exponentials cannot form an unconditional basis in $L^p$ for $p\neq2$, the non-existence of unconditional Schauder frames of exponentials turned out to be a more subtle fact, and the study of this problem led to certain interesting results.

The results of the present paper, which we state below, are close in spirit to the results in the above mentioned papers. In the proofs we further develop the approaches from \cite{OSSZ11}, \cite{FOSZ14}, \cite{LT25a}, \cite{LT25b} and use also several new ideas from Fourier analysis and Banach space theory.

% =======================================

\section{Results}

\subsection{}

We begin the presentation of our results with the case $p > 2$.

\begin{thm}\label{thm:nobases_p>2}
If $p > 2$ then there does not exist any Gabor system
$\G(g;\Lambda)$ that forms an
unconditional basis in the space $L^p(\R)$.
\end{thm}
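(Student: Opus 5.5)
Suppose, towards a contradiction, that $\G(g;\Lambda)$ is an unconditional basis in $L^p(\R)$ with $p>2$, and write $g_{ts}(x)=g(x-t)e^{2\pi i s x}$. Let $\{\gtss\}\sbt L^{q}(\R)$, $1/p+1/q=1$, be the biorthogonal coefficient functionals. The plan is to play the lattice structure of $L^p$ against the translation and modulation invariance of the system. Since $L^p$ has cotype $p$ and type $2$, the unconditional basis yields, for finite sums, the square function equivalence
\[
\Big\|\sum c_{ts}\gtsp\Big\|_p \asymp \Big\|\Big(\sum |c_{ts}|^2|\gtsp|^2\Big)^{1/2}\Big\|_p .
\]
Here $|\gtsp(x)|=|g(x-t)|$, so the modulation drops out of the right-hand side. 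This is the key structural consequence.

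\textbf{Step 1 (normalization and separation).} The norms $\|\gtsp\|_p=\|g\|_p$ are constant, so by unconditionality the functionals satisfy $\sup\|\gtss\|_q<\infty$. For any bounded box $Q$ in the time-frequency plane I would show that $\Lambda\cap Q$ is finite and uniformly bounded in cardinality. If infinitely many points $(t,s)$ had the same $t$, or accumulating $t$, the corresponding $|\gtsp|$ would nearly coincide. The square function would then make $\sum c_{ts}\gtsp$ behave like $\ell^2$ in the coefficients. On the other hand, modulations $g(x-t)e^{2\pi i s x}$ with $s\to\infty$ tend weakly to $0$, and a weakly null normalized block sequence in $L^p$, $p>2$, with asymptotically disjoint or equimeasurable-like behaviour should span $\ell^p$ or $\ell^2$ according to Kadec--Pełczyński. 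I would use this dichotomy to control the $t$-projection of $\Lambda$.

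\textbf{Step 2 (main argument via Kadec--Pełczyński).} By the Kadec--Pełczyński dichotomy, every normalized unconditional basic sequence in $L^p$, $p>2$, has a subsequence equivalent either to the $\ell^2$ basis or to the $\ell^p$ basis (the latter when it is close to disjointly supported). Consider two families:
\begin{enumerate-num}
\item Elements $g_{t_k s_k}$ with $|t_k|\to\infty$. Their moduli are translates of $|g|$ escaping to infinity, so they are asymptotically disjoint, and by the square function these span $\ell^p$.
\item Elements with $t_k$ bounded. These exist with infinite multiplicity by a density argument: the basis must represent functions in $L^p[0,1]$ with arbitrarily high frequency. By the square function with comparable moduli, these span $\ell^2$.
\end{enumerate-num}
The contradiction should come from applying the square function estimate to a specific test function, namely $f=\1_{[0,1]}e^{2\pi i N x}$ or a long train of such bumps. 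Its coefficients $\gtss(f)$ are controlled by translation and modulation covariance of the dual system. The dual system is itself Gabor-like only if $\Lambda$ is a group; in general I would instead use the uniform bound on $\|\gtss\|_q$.

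\textbf{Main obstacle.} I expect the hardest step to be turning the square function equivalence into a contradiction for an arbitrary irregular $\Lambda$. The natural route is to average. Apply the estimate to $f(\cdot-a)e^{2\pi i b\,\cdot}$ and average over $(a,b)$, using that the unconditional constant is invariant. This converts the problem into an inequality of the form $\|\,|g|^2 * \mu\|_{p/2}$ versus $\|\mu\|$ for a measure $\mu$ built from $\Lambda$. I would then test with $f$ a sum of $M$ widely spaced bumps, for which $\|f\|_p\sim M^{1/p}$, and with $f$ a wave packet with $M$ frequencies at one location, for which $\|f\|_p\sim M^{1/2}$. These two tests force incompatible growth of the same coefficient density of $\Lambda$, and this incompatibility is exactly what $p\neq2$ rules out.
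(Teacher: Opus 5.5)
Your proposal never reaches a contradiction, and the ingredients it collects cannot produce one. Every feature you extract is already present in the example following Theorem~\ref{thm:uncseq_p<2}: the system $\G(\1_{[0,1]};\Z\times\{2^j\})$, whose verification works verbatim for $p>2$. The features in question are:
\begin{itemize}
\item the square-function equivalence;
\item Kadec--Pe{\l}czy\'nski subsequences equivalent to $\ell^p$ (far-apart translates) or to $\ell^2$ (many frequencies over a bounded set of $t$);
\item test functions with norms $\sim M^{1/p}$ (spaced bumps) and $\sim M^{1/2}$ (wave packets).
\end{itemize}
That system is an unconditional basic sequence whose span is isomorphic to $\ell^p(\ell^2)$. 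So no ``incompatible growth'' can come out of such local tests, since both behaviours coexist in $L^p$ and in $\ell^p(\ell^2)$ alike.

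The averaging over $(a,b)$ has nothing to act on either. For an arbitrary $\Lambda$ the system is not invariant under $f\mapsto f(\cdot-a)e^{2\pi i b\,\cdot}$, so the coefficients of the shifted function bear no relation to $\gtss(f)$.

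The obstruction has to be an isomorphic invariant, and the missing idea is a global structure theorem. The paper proves that the whole system is equivalent to the unit vector basis of $(\bigoplus_k\ell^2_{\Lambda_k})_p$. Here $\Lambda_k$ is the part of $\Lambda$ in the strip $k\delta\le t<(k+1)\delta$, with $\delta$ chosen so that $\|\tau_tg-g\|_p\le\frac12\|g\|_p$ for $0<t<\delta$. It then invokes Lemma~\ref{lem:noniso}: $L^p$, $p\ne2$, does not embed in $\ell^p(\ell^2)$.

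Proving that structure theorem is exactly where your item (1) breaks down. For $p>2$ the Johnson--Odell lemma (Lemma~\ref{lem:JO.EXT}) fails. Asymptotic disjointness of the moduli $|\gtsp|$ therefore gives $\ell^p$ behaviour only along sparse subsequences, not across the full family of blocks. Theorem~\ref{thm:n.c.s} exhibits translates $g(x-j)$ forming an unconditional basic sequence that is not equivalent to any subsequence of the unit vector basis of $\ell^p(\ell^2)$.

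The paper gets around this by duality, which is where the hypothesis that the span is all of $L^p$ (hence complemented) enters. Your plan never uses this hypothesis. The argument runs as follows.
\begin{itemize}
\item The functionals $g^*_{ts}=P^*\pphi^*_{ts}$ form an unconditional basic sequence in $L^{p'}$, $p'<2$.
\item Each block obeys two-sided $\ell^2$ estimates, obtained from type $2$, the square function with translation continuity inside a strip, and biorthogonality.
\item Random signs are chosen so that each primal block $h_{k,\theta}$ has almost all its $L^p$ mass on $E_k=I+k\delta$.
\item Via $\int h_{k,\theta}h^*_{k,\theta}=\sum|b_{ts}|^2$, this forces the dual block $h^*_{k,\theta}$ to keep a fixed proportion of its $L^{p'}$ mass on $E_k$.
\item Lemma~\ref{lem:JO.EXT} in $L^{p'}$ then yields the $\ell^{p'}(\ell^2)$ structure, which dualizes back.
\end{itemize}

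Your item (2) is also unjustified as stated. The frame in Theorem~\ref{thm:framesp>2}(i) represents every $f\in L^p$, including high-frequency functions on $[0,1]$, using only elements whose $t$-coordinates tend to infinity. So your ``density argument'' does not force points with bounded $t$, though the paper's proof does not need this anyway.
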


We emphasize the generality of this result: there are no a priori assumptions on the window function $g\in L^p(\R)$ or the countable time--frequency shift set $\Lambda\subset\R \times \R$. 

\thmref{thm:nobases_p>2} will be obtained as a consequence of a more general result: we will prove that if the Gabor system $\G(g;\Lambda)$ forms an unconditional basis for a complemented subspace $X \sbt L^p(\R)$, $p>2$, then the system is equivalent to a subsequence of the standard unit vector basis of the Banach space $(\ell^2 \oplus\ell^2 \oplus\cdots)_p$. In turn, this implies that $X \ne L^p(\R)$. We moreover present an example showing that the requirement that the subspace $X$ be complemented is crucial in this result (see \secref{sec:basesp>2}).

\subsection{}
Next, we turn to the problem of existence of Gabor systems $\G(g;\Lambda)$ that form an unconditional Schauder frame in the space $L^p(\R)$, $p>2$. The following result completely characterizes the time--frequency shift sets $\Lam$ for such Gabor systems.

\begin{thm}\label{thm:framesp>2}
\textup{(i)} Let $2 < p < \infty$ and $\Lambda\subset\R \times \R$ be a countable set which is not contained in any vertical strip $[-a, a]\times \R$. Then there exist a function $g\in L^p(\R)$ and a sequence $\{ \gtss \}$ in $(L^p(\R))^*$ such that the system
\begin{equation}
\label{eq:gaborschframe}
\{(\gtsp, \gtss) : (t,s)\in\Lambda\}, \quad \gtsp(x) = g(x-t)e^{2\pi i s x},
\end{equation}
forms an unconditional Schauder frame in the space $L^p(\R)$;

\textup{(ii)} To the contrary, if $ p \ge 2$ and $\Lambda\subset [-a,a]\times\R$, then no system \eqref{eq:gaborschframe} can  form an unconditional Schauder frame in $L^p(\R)$.
\end{thm}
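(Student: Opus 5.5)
Part (i) will build on the construction of \cite{FOSZ14}, which gives, for $p>2$, an unconditional Schauder frame of translates $\{(f(x-\lambda_n), f_n^*)\}$ in $L^p(\R)$. The point to exploit is that those frames can use a translation set $\{\lambda_n\}$ that is sparse and tends to infinity. Since $\Lambda$ is not contained in any vertical strip, we can choose a sequence $(t_n,s_n)\in\Lambda$ with $|t_n|\to\infty$ as fast as we like. Passing to a subsequence, we may assume (say) $t_n\to+\infty$ with $t_{n+1}-t_n$ growing rapidly.

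The plan is to redo the FOSZ construction with these prescribed translates and absorb the modulations into the coefficients.
\begin{enumerate-text}
\item Write the window as $g=\sum_k h_k$, where the pieces $h_k$ have essentially disjoint, far-apart supports, each piece is an $L^2$-type block, and $\|h_k\|_p$ is summable.
\item Because $p>2$, the space $(\sum \ell^2)_p$-type structure lets us choose $g$ so that $g(\cdot-t_n)$ reproduces, on a localized block, a prescribed element of a fixed unconditional frame of $L^p$ (for instance, Haar or wavelet pieces of $L^2$-normalized blocks). The remaining pieces of $g(\cdot-t_n)$ are then handled as small perturbations.
\item The modulation $e^{2\pi i s_n x}$ is harmless. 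Multiplying by a unimodular function is an isometry, so we put the factor $e^{2\pi i s_n x}$ into the design of the $n$-th block of $g$. Concretely, on the region that lands near the target after translating by $t_n$, we place $e^{-2\pi i s_n (x+t_n)}$ times the target block. Because the supports are disjoint, these choices do not interfere.
\item The dual functionals $\gtss$ are defined to be zero for elements of $\Lambda$ outside the chosen subsequence, and for the chosen elements they are taken from the frame we are copying.
\end{enumerate-text}
Unconditional convergence then follows from a perturbation argument. The error terms coming from the other blocks must be unconditionally summable, which requires the separation $t_{n+1}-t_n$ to be large and a decay choice such as $\|h_k\|_p\le 2^{-k}$. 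The main obstacle is exactly this step: controlling the cross terms, i.e. ensuring that the operator $x\mapsto\sum_n \gtss(x)\,(\text{tail pieces})$ is small in norm uniformly over sign choices. I would try to get this from the $p>2$ estimate $\|\sum \pm a_n u_n\|_p\lesssim(\sum\|u_n\|_p^2)^{1/2}$ for disjointly supported blocks, combined with type $2$ of $L^p$.

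For part (ii), assume $\Lambda\subset[-a,a]\times\R$ and suppose we have an unconditional frame. Fix $R$ large and take $x$ supported in $[R,R+1]$. Every $\gtsp$ is a modulation of $g(\cdot-t)$ with $|t|\le a$, so on $[R,R+1]$ it is controlled by $g$ restricted to $[R-a,R+1+a]$, a set whose $L^p$ mass tends to $0$ as $R\to\infty$. Here we use the restriction operator $P_R$ (multiplication by $\1_{[R,R+1]}$) applied to the expansion
\[
x = P_R x = \sum_{(t,s)\in\Lambda} \gtss(x)\, P_R \gtsp .
\]
Unconditionality yields uniform boundedness of the operators $x\mapsto\sum \pm\gtss(x)\gtsp$. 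Averaging over signs and using cotype $p$ of $L^p$ when $p\ge2$ (a square-function estimate), we get
\[
\|x\|_p \lesssim \Bigl\|\Bigl(\sum |\gtss(x)|^2|P_R\gtsp|^2\Bigr)^{1/2}\Bigr\|_p .
\]
Since $|P_R\gtsp|=|g(\cdot-t)|\1_{[R,R+1]}$ does not depend on $s$, the right-hand side is at most roughly $\|\1_{[R-a,R+a+1]}g\|_{p}\cdot(\text{bounded coefficient quantity})$. The aim is to bound that coefficient quantity uniformly in $x$ with $\|x\|_p\le 1$, using the duality and unconditionality constants. Letting $R\to\infty$ would then force $\|x\|_p\to0$, a contradiction. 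The delicate point is exactly this uniform coefficient bound: the coefficient sum over $s$ may be infinite.
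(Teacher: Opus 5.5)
\textbf{Part (ii).} This is the substance of the theorem, and the gap is here. Your step
$\|x\|_p\lesssim\bigl\|\bigl(\sum|\gtss(x)|^2|P_R\gtsp|^2\bigr)^{1/2}\bigr\|_p$
does not follow from unconditionality.

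\begin{itemize}
\item For a Schauder frame, \lemref{lem:unc_constant} gives only the upper bound $\mathbb{E}\|\sum\eps_{ts}\gtss(x)\gtsp\|_p\le K\|x\|_p$. The reverse bound is a feature of unconditional bases, coming from uniqueness of coefficients. No cotype property can supply it, since cotype compares the random sum with its individual terms, not with $x$.
\item The reverse bound actually fails for the frames of part (i). There each Haar function $h_l$ is reproduced by $N_l$ frame elements with coefficients $N_l^{-1/2}$. So for suitable $x$ with $\|x\|_p\asymp 1$, the sign-randomized expansion has norm $O(N_l^{1/p-1/2})\to 0$.
\item The coefficient bound you leave open is also a real step for $p>2$, since there is no cotype $2$. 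The paper obtains $\sum_k\bigl(\sum_{t\in\TTT}\sum_{s\in\SSS_k}|\gtss(f)|\bigr)^2\le M^2\|f\|_p^2$ as follows. For each time cell $\TTT_n$ it randomizes only over frequency blocks $\SSS_k$ of width $\del$, using the coefficients $|\gtss(f)|$, and restricts to a large interval $I$. It then replaces $e_s\ts_tg$ on each cell $\TTT_n\times\SSS_k$ by $e_{k\del}\ts_{n\del}g$, whose modulus does not depend on $k$, and controls the error with \lemref{lem:mink}.
\item Even this estimate does not let you sum the expansion on $[R,R+1]$ directly. Infinitely many frequency blocks contribute, and the total $\ell^1$ mass of the coefficients is not controlled.
\end{itemize}

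Your instinct to test on functions escaping to infinity is right; the paper uses $f_n=\1_{[n,n+1]}$. The missing idea is to pass through a band-limited convolution $U_\pphi f=f\ast\pphi$ and show that it maps weakly null sequences to norm-null ones.
\begin{itemize}
\item Frequency blocks $|k|\le N$ are handled by a finite-rank approximation together with weak nullness.
\item Blocks $|k|>N$ are handled by rewriting $(e_s\ts_tg)\ast\pphi$ in terms of $\Del_{I_k}g$. Rubio de Francia's Littlewood--Paley inequality makes $\|(\sum_{|k|>N}|\Del_{I_k}g|^2)^{1/2}\|_p$ small, and this is combined, via Cauchy--Schwarz in $k$, with the block estimate.
\end{itemize}
Then $U_\pphi f_n=\ts_nU_\pphi f_0$ gives the contradiction.

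\textbf{Part (i).} Your outline matches the paper: follow FOSZ14, insert the factor $e^{-2\pi i s_n(x+t_n)}$ into the piece of $g$ that lands on the target after the time shift, and take zero functionals off the chosen subsequence. However, the mechanism you suggest for the cross terms would fail. If each block of $g$ carries a single target with $\|h_k\|_p\le 2^{-k}$, as your decay choice suggests, the reproducing functionals must grow like $2^k$, and the cross-term operator becomes unbounded.

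The construction instead works as follows.
\begin{itemize}
\item Each $L^p$-normalized Haar function $h_k$ is placed into $N_k$ separate pieces of $g$ with weight $N_k^{-1/2}$, and the functionals are $N_k^{-1/2}h_k^*$.
\item The main terms then add up to $h_k^*(f)h_k$.
\item The disjointly supported cross terms have norm at most $K_p\bigl(\sum_kN_k^{1-p/2}\bigr)^{2/p}\|f\|_p<\|f\|_p$. This is exactly where $p>2$ is used.
\end{itemize}
This yields only an unconditional approximate frame, so the functionals must still be corrected via \lemref{lem:approxframe}.
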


In other words, an unconditional Gabor frame  in $L^p(\R)$, $2 < p < \infty$, with a given time--frequency shift set $\Lam$ exists if and only if the time shifts (i.e.\ the translates) in the set $\Lam$ form an unbounded sequence; at the same time $\Lam$ can be arbitrarily sparse. The proof of part (i) is done by a minor modification of the construction in \cite{FOSZ14} of unconditional Schauder frames of translates. 

The main novelty, thus, lies in part (ii), which is new even for $p=2$. In fact, by an application of Plancherel's theorem, our result implies that also if the set $\Lam$ is contained in a \emph{horizontal} strip $\R \times [-a,a]$, then no system \eqref{eq:gaborschframe} can  form an uncon\-ditional Schauder frame in $L^2(\R)$. 
This generalizes our recent result from \cite{LT25a} on the non-existence of unconditional Schauder frames of translates in the space $L^2(\R)$.

The case $p > 2$ is, however, significantly more complicated than the  $p=2$ case, mainly due to the absence of Plancherel's theorem in $L^p(\R)$ spaces. Our approach involves a certain substitute, namely, Rubio de Francia's Littlewood--Paley inequality for arbitrary intervals, which was proved in \cite{Rub85}.

\subsection{}
Once we know that unconditional Gabor frames do exist in the space $L^p(\R)$, $p > 2$, it is natural to ask whether the window function $g$ can be ``nice'', i.e.\ enjoy certain smoothness and decay properties. Our next theorem is a Balian--Low type result giving a negative answer in this direction.

Recall that the \emph{Wiener amalgam space} $W(L^\infty, \ell^1)$, that
we will shortly denote by $W$, consists of all measurable
functions $g$ on $\R$ satisfying
\begin{equation}
\label{eq:amalgam}
\|g\|_W =\sum_{k\in\Z} \|g|_{[k,k+1]}\|_{\infty} < \infty,
\end{equation}
see e.g.\ \cite[Section 11.4]{Hei11}.
It is easy to see that $W \subset L^p(\R)$, $1\le p\le\infty$.

We also recall that a set $\TTT\subset\R$ is called 
\emph{uniformly discrete} if the distance between 
any two distinct points of $\TTT$ is bounded from 
below by a positive constant.

\begin{thm}
\label{thm:nicegen}
Let $p \ne 2 $, and suppose that $g$ is a continuous function in $W$. 
Then for any uniformly discrete set $\TTT\subset \R$ and any 
countable set $\Lam \sbt \TTT\times \R$, 
no system \eqref{eq:gaborschframe} can form 
an unconditional Schauder frame in $L^p(\R)$.
\end{thm}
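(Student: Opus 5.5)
Suppose, towards a contradiction, that $\{(\gtsp,\gtss)\}$ is an unconditional Schauder frame in $L^p(\R)$, with $g\in W$ continuous and $\Lam\sbt \TTT\times\R$, $\TTT$ uniformly discrete. The plan is to localize the frame to one bounded interval. There the problem becomes a statement about exponentials times a fixed continuous function, and we can then invoke the known obstruction for unconditional frames of exponentials in $L^p$ of an interval (\cite{LT25b}), together with the separate obstruction for $p<2$ coming from the translates case (\cite{LT25a}).

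\textbf{Step 1: partition by time shifts.} Write $\Lam=\bigcup_{t\in \TTT}\{t\}\times \Lam_t$. Since the frame is unconditional, the uniform boundedness principle gives a constant $C$ with
\[
\Bigl\|\sum_{(t,s)\in F}\eps_{ts}\,\gtss(f)\,\gtsp\Bigr\|_p\le C\|f\|_p
\]
for all finite $F$ and all signs (or unimodular scalars) $\eps_{ts}$. Fix a short interval $I$ and consider $f$ supported in $I$. By continuity of $g$, choose $I$ and a point $x_0$ with $|g|\ge c>0$ near $x_0$. The decay $g\in W$ together with uniform discreteness of $\TTT$ gives $\sum_{t\in\TTT}\sup_{x\in I}|g(x-t)|\le C'\|g\|_W<\infty$, uniformly over translates of $I$. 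Hence only a summable family of time shifts contributes substantially on $I$. Multiplying the expansion by $\1_{I}$, we get for $f\in L^p(I)$
\[
f=\sum_{t}\ g(x-t)\sum_{s\in\Lam_t}\gtss(f)e^{2\pi i s x}\quad\text{on }I,
\]
unconditionally.

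\textbf{Step 2: reduce to a single translate.} Using the $W$ bound, truncate to finitely many $t$, say $t_1,\dots,t_N$, with a small error in operator norm. Choose $I$ so small that on $I$ the functions $g(x-t_j)$ are continuous and close to constants $g(x_0-t_j)$ (Step 1 needs uniform continuity on a compact set, which holds). Then the system restricted to $I$ is a small perturbation of a union of $N$ families of exponentials $\{e^{2\pi i s x}\}_{s\in\Lam_{t_j}}$ with constant coefficients. Grouping the families gives an unconditional Schauder frame of exponentials in $L^p(I)$, obtained after a perturbation argument: an unconditional frame operator $\sum_n u_n^*(\cdot)u_n$ that is $\delta$-close to the identity can be corrected by the inverse of that operator. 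The constants $|g(x_0-t_j)|$ are absorbed into the functionals.

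\textbf{Step 3: contradiction.} For $p\ne2$, \cite{LT25b} shows there is no unconditional Schauder frame of exponentials in $L^p$ of an interval, which contradicts Step 2.

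The main obstacle is Step 2. The perturbation is not uniform in the frequencies, since the unconditional sums over $s$ may be large. So the closeness of $g(x-t_j)$ to a constant must be controlled through a multiplier estimate. My first attempt would be to write $g(x-t_j)=g(x_0-t_j)+h_j(x)$ with $\|h_j\|_{L^\infty(I)}$ small and use $\|h_j\phi\|_p\le\|h_j\|_\infty\|\phi\|_p$. This requires bounding $\sum_{s}\eps_s\gtss(f)e^{2\pi i s x}$ in $L^p(I)$ for each $t_j$ separately, and extracting that bound from the unconditional constant is exactly where the uniform discreteness of $\TTT$ has to be used.
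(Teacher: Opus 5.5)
Your overall architecture matches the paper's proof: localize to an interval where $|g|\ge c$, discard far time shifts using the amalgam bound, replace the remaining finitely many $\ts_t g$ by constants on a small subinterval, then pass through \lemref{lem:approxframe} to an unconditional Schauder frame of exponentials and invoke \cite{LT25b}. But there is a genuine gap, at exactly the point you flag and leave open. You need the uniform estimate
\[
\Big\|\sum_{s}\theta_s\, g_{ts}^*(f)\, e_s\Big\|_{L^p(I)}\le M\|f\|_p,\qquad |\theta_s|\le 1,
\]
with $M$ independent of $t\in\TTT$, and everything depends on it. Without it, the identity in your Step 1 is not justified. The inner series need not converge on $I$ when $\ts_t g$ is small or vanishes there; you only know that the full fiber $\sum_s g_{ts}^*(f)e_s\ts_t g$ converges. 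Likewise, the bound $\sum_{t}\|(\ts_t g)|_I\|_\infty<\infty$ gives no control of the far shifts. And the approximation of $\ts_{t_j}g$ by constants in $L^\infty$ cannot be turned into an operator-norm error. Your guess that this estimate is where uniform discreteness enters also points the wrong way: it holds for each $t$ separately, with no assumption on $\TTT$.

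The missing idea is translation covariance of time--frequency shifts: $(e_s\ts_t g)(x+t)=e^{2\pi i st}e_s(x)g(x)$. Apply unconditionality to the single fiber at $t$ with the unimodular multipliers $e^{-2\pi i st}$. The function $v_tf=\sum_s e^{-2\pi i st}g_{ts}^*(f)\,e_s\ts_t g$ then satisfies $\|v_tf\|_p\le K\|f\|_p$, and $(v_tf)(x+t)=g(x)\sum_s g_{ts}^*(f)e_s(x)$. Take $I$ to be a fixed interval where $|g|\ge c$ and divide by $g$ there. This gives unconditional convergence in $L^p(I)$ and the bound with $M=Kc^{-1}$, uniform in $t$. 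The same $I$ works for every $t$ because each fiber is translated back to where $g$ itself is large. Uniform discreteness is used only to make $\sum_{t\in\TTT}\|(\ts_tg)|_I\|_\infty$ finite, so that shifts with $|t|>r$ contribute at most $\frac14\|f\|_p$, and to make $\TTT\cap[-r,r]$ finite.

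With this lemma in hand, your Step 2 works as you describe. You can use continuity of $g$ directly on a subinterval $J\sbt I$ instead of the paper's step-function approximation, and combine the functionals as $h_s^*=\sum_{|t|\le r}c_tg_{ts}^*$ after reducing to $\Lam=\TTT\times\SSS$. Finally, \cite{LT25b} already covers all $p\ne2$, so the appeal to \cite{LT25a} is unnecessary.
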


In fact, the continuity assumption can be relaxed:
our proof works for any function $g\in W$
which can be uniformly approximated by compactly supported
step functions. Also, the result remains valid in the more general case where 
$\TTT \sbt \R$ is a set of bounded density, i.e.\ a finite union of uniformly discrete sets.

The reader may compare \thmref{thm:nicegen} to the Balian--Low type result 
in \cite[Theorem 11.33]{Hei11} mentioned above. Another result worth mentioning here is that if $g\in (L^p \cap L^1)(\R)$, then no system of uniformly discrete translates of $g$ may form a Schauder frame (even not unconditional) in $L^p(\R)$, see \cite[Proposition 2.2]{OSSZ11}.

\subsection{}
We now turn to the case $1 < p < 2$. For these values of $p$ the situation changes, 
and our result states that under a uniform discreteness assumption, no unconditional 
Gabor bases or frames exist in the space $L^p(\R)$.

\begin{thm}
\label{thm:gabframesp<2}
Let $1<p<2$, and assume that
$\Lambda\subset \TTT\times \R$ for some uniformly discrete set $\TTT\subset\R$.
Then no system \eqref{eq:gaborschframe} can form an unconditional Schauder frame in $L^p(\R)$.
In particular, there is no unconditional basis of the form $\G(g;\Lambda)$ in $L^p(\R)$.
\end{thm}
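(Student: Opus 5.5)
We argue by contradiction, following the strategy of \cite{LT25a} for frames of translates when $p\le 2$. Suppose $\{(\gtsp,\gtss)\}$, $(t,s)\in\Lam\sbt\TTT\times\R$, is an unconditional Schauder frame in $L^p(\R)$, $1<p<2$. The first step is standard: unconditionality and the uniform boundedness principle give a constant $C$ such that
\[
\Big\|\sum_{(t,s)\in F}\eps_{ts}\,\gtss(f)\,\gtsp\Big\|_p\le C\|f\|_p
\]
for every finite $F\sbt\Lam$ and all signs $\eps_{ts}$. Averaging over random signs and using Khintchine's inequality turns this into the square function bound
\[
\Big\|\Big(\sum_{(t,s)\in\Lam}|\gtss(f)|^2|\gtsp|^2\Big)^{1/2}\Big\|_p\le C\|f\|_p .
\]
Since $p<2$, $L^p$ has cotype $2$, so we also get $\big(\sum|\gtss(f)|^2\|\gtsp\|_p^2\big)^{1/2}\le C\|f\|_p$.

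The second step uses the modulations. For each $t\in\TTT$, group together the terms with that time shift, $\Lam_t=\{s:(t,s)\in\Lam\}$. All functions $\gtsp$ with $s\in\Lam_t$ have the same modulus $|g(\cdot-t)|$. The square function therefore collapses to
\[
\Big\|\Big(\sum_{t\in\TTT}|g(x-t)|^2 a_t(f)^2\Big)^{1/2}\Big\|_p,\qquad a_t(f)=\Big(\sum_{s\in\Lam_t}|\gtss(f)|^2\Big)^{1/2}.
\]
The reconstruction identity then says that $f$ is the sum over $t$ of pieces $f_t=\sum_{s}\gtss(f)\gtsp$. Each $f_t$ is, up to a random-sign average, controlled pointwise by $|g(\cdot-t)|\,a_t(f)$. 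In this way the frame is dominated by a ``translation-type'' structure with $\ell^2$-valued coefficients, and the frequency content in $\Lam_t$ plays no further role.

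The third step, the heart of the proof, produces a test function contradicting this bound. Take $g$ approximated in $L^p$ by a compactly supported function; the uniform discreteness of $\TTT$ then gives bounded overlap among the translates $g(\cdot-t)$, $t\in\TTT$. Consider $f$ spread over a long interval $[0,N]$, say $f=N^{-1/p}\1_{[0,N]}$ multiplied by a suitable modulation, or a lacunary sum of normalized bumps. Because the overlap is bounded, the left-hand side is comparable to $\big(\sum_t a_t(f)^p\|g\|_p^p\big)^{1/p}$ restricted to $t$ near $[0,N]$; in particular $\|a(f)\|_{\ell^p}\lesssim\|f\|_p$. On the other hand, reconstruction together with the triangle and Hölder inequalities, using bounded overlap and $p<2$, should force $\|f\|_p\lesssim\|a(f)\|_{\ell^2}$-type bounds involving the $\ell^p$ norm of the coefficients on blocks. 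Now let $N\to\infty$ and exploit the tails of $g$ together with the fact that $L^p$ with $p<2$ cannot contain this $\ell^p$-sum structure uniformly, as in the argument of \cite{LT25a}. The aim is to show that the compact-support approximation error accumulates over $\sim N$ translates to a quantity of order $N^{1/p-1/2}\to\infty$, since $p<2$. That contradicts the uniform bound $C$.

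I expect this third step to be the main obstacle. The difficulty is making the non-compact tails of $g$ negligible uniformly in $N$ while keeping the $\ell^2$ versus $\ell^p$ mismatch. Concretely, the pieces $f_t$ must be shown to be essentially localized near $t$ with $\ell^p$-summable control. I would first try to adapt the duality argument from \cite{LT25a}: pass to $L^{q}$ with $q>2$, where the dual functionals are constrained, and use the square function bound there.
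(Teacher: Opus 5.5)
Steps 1 and 2 are correct as far as they go, but there is a genuine gap. Step 3 never produces a contradiction, and the reduction in Step 2 discards exactly the information the theorem depends on.

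\textbf{Why your route cannot work.} You say that after collapsing the square function ``the frequency content in $\Lam_t$ plays no further role''. So everything afterwards uses only $|e_s|=1$, bounded overlap of translates, and the tails of $g$. No argument of that kind can succeed. By \cite{LT25b} there are unimodular functions $h_s$, $s\in\Z$, forming an unconditional Schauder frame in $L^p[0,1]$. Extended $1$-periodically, $\{h_s\,\ts_t\1_{[0,1]}\}$, $(t,s)\in\Z\times\Z$, is an unconditional Schauder frame in $L^p(\R)$. Here $\TTT=\Z$ is uniformly discrete and the window is compactly supported, so there are no tails and no error to accumulate over $\sim N$ translates. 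All your Step 1--2 conclusions hold in this example, yet nothing is contradicted.

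The $\ell^p(\ell^2)$-type obstruction you allude to does work for unconditional \emph{bases}; this is \thmref{thm:uncseq_p<2}, and it indeed only uses $|e_s|=1$. For frames, however, the functionals are not biorthogonal and no such structure is forced.

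Your pointwise claim in Step 2 is also too strong. Only the Rademacher average is controlled: $\E|\sum_s\eps_s\gtss(f)e_s(x)|^2=a_t(f)^2$. The actual piece $f_t=\ts_t g\cdot\sum_s\gtss(f)e_s$ is not bounded by $|g(\cdot-t)|\,a_t(f)$. When $\Lam_t$ has clustering frequencies, even $\|\sum_s\gtss(f)e_s\|_{L^2(I)}$ is not controlled by the $\ell^2$ norm of the coefficients. That is why the cotype-2 bound alone is insufficient.

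\textbf{What is missing: a way to use the exponentials.} The paper does this in two moves.

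First, it proves a stronger coefficient estimate. Choose $\del>0$ with $\sup_{0<s<\del}\int_\R|g|^p|e_s-1|^p<2^{-p}$, and split the frequencies into blocks $\SSS_l=\SSS\cap[l\del,(l+1)\del)$. Apply unconditionality with signs constant on each pair $(t,l)$ and coefficients $|\gtss(f)|e^{-2\pi i st}$. Then within a block the elements $\ts_t(e_s g)$ are nearly aligned, uniformly in $t$. Cotype $2$ now gives
\[
\Big(\sum_{t\in\TTT}\sum_{l\in\Z}\Big(\sum_{s\in\SSS_l}|\gtss(f)|\Big)^2\Big)^{1/2}\lesssim\|f\|_p .
\]
Via \lemref{lem:bess} this yields $\sum_t\|u_tf\|_{L^2(I)}^2\lesssim\|f\|_p^2$ for $u_tf=\sum_s\gtss(f)e_s$ on every unit interval $I$. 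This is precisely where exponentials are used, and it fails for general unimodular $h_s$.

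Second, the contradiction is local; it is not obtained by spreading $f$ over $[0,N]$. From $f=\sum_t\ts_tg\cdot u_tf$ and H\"older with $p'>2$, one gets $|f|\le(\sum_t|u_tf|^2)^{1/2}\,G$, where $G=(\sum_t|\ts_tg|^p)^{1/p}$. By uniform discreteness of $\TTT$, $G$ is locally in $L^p$, so no compact-support approximation of $g$ is needed. On a set $E\sbt I$ of positive measure where $G\le M$, this gives $\|f\|_2\lesssim\|f\|_p$ for every $f$ supported on $E$. That is impossible for $p<2$. The duality detour to $L^q$, $q>2$, that you propose is not needed.
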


Moreover, we prove a result about unconditional Gabor basic sequences:
 if the Gabor system $\G(g;\Lambda)$ forms an unconditional basic sequence in $L^p(\R)$, 
 $1 \le p < 2$, and if
 $\Lambda\subset \TTT\times \R$ for some uniformly discrete set $\TTT\subset\R$,
  then the system is equivalent to a subsequence of the standard unit vector basis of $(\ell^2 \oplus\ell^2 \oplus\cdots)_p$. As a consequence, the system $\G(g;\Lambda)$ cannot span $L^p(\R)$. Note that in this result (contrary to the $p>2$ case) we do not assume that the system $\G(g;\Lambda)$ spans a complemented subspace of $L^p(\R)$.

The results are true more generally in the case where
$\TTT \sbt \R$ is a finite union of uniformly discrete sets.
On the other hand, we will present an example showing that 
our result about unconditional Gabor basic sequences
in $L^p(\R)$, $1 \le p<2$,
becomes false if the uniform discreteness assumption
 is dropped (see \secref{sec:basesp<2}).

We suspect that no unconditional Gabor bases or frames exist in $L^p(\R)$, $1 < p <2$, without imposing any a priori assumptions on the countable set $\Lambda \sbt \R \times \R$ or the
 function $g \in L^p(\R)$; however we leave this problem open.

\subsection{}
In the remainder of the paper, we prove the theorems stated above and
 provide some additional remarks about the results. 
 In \secref{sec:prelim}, we review some preliminary 
basic facts from Banach space theory and Fourier analysis, 
which are used throughout the paper. 
In Sections \ref{sec:basesp<2} and \ref{sec:framesp<2} we 
prove our results on unconditional Gabor basic sequences and 
frames in $L^p(\R)$ spaces, $1 < p <2$.
In Sections \ref{sec:basesp>2}, \ref{sec:framesp>2} 
and \ref{sec:nicewind}, the results concerning  the case $p>2$ are presented.

\emph{Remark}. The results extend, with essentially the same proofs, to higher dimensions,
i.e., they hold for Gabor systems in $L^p(\R^d)$ for any dimension $d$.
We present the results in dimension one merely in order to simplify the exposition.

% =======================================

\section{Preliminaries}
\label{sec:prelim}

In this section, we review some preliminary 
basic facts from Banach space theory and Fourier analysis, 
that will be used throughout the paper. 

\subsection{}
\emph{Notation}. 
For $t\in\R$ we use $\ts_t g$ to denote translates of a function $g\in L^p(\R)$, i.e. 
\begin{equation}
(\ts_t g)(x)=g(x-t), \; x \in \R.
\end{equation}
For $s \in \R$ we use $e_s$ to denote the exponential function with frequency $s$,
\begin{equation}
e_s(x) = e^{2\pi i s x}, \;  x \in \R.
\end{equation}
Thus, time-frequency shifts of $g$ will be denoted as 
\begin{equation}
(e_s \ts_t g)(x) =  g(x-t) e^{2 \pi i s x} 
\end{equation}
where $(t,s) \in \R \times \R$.

We will write $A\lesssim B$ meaning that $A\le CB$ holds for some positive constant $C$ (it will   be clear from the context on what parameters the constant $C$ can depend). Also, $A\asymp B$ means that we have both $A\lesssim B$ and $B\lesssim A$.

\subsection{}
We start with a basic property of unconditional Schauder frames, that can be proved
using the uniform boundedness principle in the same way 
as a similar statement for unconditional Schauder bases (see e.g.\ \cite[Proposition 3.1.3]{AK16}).

\begin{lemma}
\label{lem:unc_constant}
Let $\{(u_j, u_j^*)\}_{j=1}^\infty$ be an unconditional Schauder frame in 
a Banach space $X$.  Then there exists a constant $K$ such that for any 
$x\in X$ and for any
sequence of scalars $\{\theta_j\}$ satisfying  $|\theta_j|\le 1$, we have
\begin{equation}
\label{eqtheta}
\Big\| \sum_{j=1}^\infty \theta_j u_j^*(x) u_j \Big\| \leq K \|x\|
\end{equation}
and the series in \eqref{eqtheta} converges unconditionally.
\end{lemma}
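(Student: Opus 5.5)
We will argue by the uniform boundedness principle, following \cite[Proposition 3.1.3]{AK16}. First, fix $x \in X$. Since $\sum_j u_j^*(x) u_j$ converges unconditionally, for every choice of scalars $|\theta_j| \le 1$ the series $\sum_j \theta_j u_j^*(x) u_j$ also converges unconditionally. This is the standard fact that unconditional convergence of $\sum y_j$ is equivalent to convergence of all subseries $\sum_{j\in A} y_j$, and that this implies bounded multiplier convergence. For real scalars $\theta_j \in [-1,1]$, one writes $\theta_j$ as a limit of finite sign combinations, or uses the Cauchy criterion together with the bound $\|\sum_{j\in F}\theta_j y_j\| \le 2\sup_{G\subset F}\|\sum_{j\in G} y_j\|$, which follows by convexity (the extreme points of $[-1,1]^F$ are sign vectors). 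For complex $\theta_j$ one splits into real and imaginary parts, at the cost of a factor~$2$. Consequently the Cauchy criterion gives convergence for arbitrary bounded multipliers, and the convergence is unconditional.

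Next we define, for each finite set $F \subset \mathbb{N}$ and each sign vector $\eps \in \{-1,1\}^F$, the operator
\[ T_{F,\eps}(x) = \sum_{j\in F} \eps_j u_j^*(x) u_j . \]
Each $T_{F,\eps}$ is a bounded linear operator on $X$ because $u_j^* \in X^*$. For fixed $x$ the family $\{T_{F,\eps}x\}$ is bounded, since by unconditional convergence $\sup_{G \text{ finite}} \|\sum_{j\in G} u_j^*(x)u_j\| < \infty$: otherwise one could build disjoint finite blocks with large norm, contradicting the Cauchy criterion for the unconditionally convergent series. Splitting by sign then gives $\|T_{F,\eps}x\| \le 2\sup_G\|\cdots\|$. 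By the uniform boundedness principle (valid since $X$ is a Banach space), $\sup_{F,\eps}\|T_{F,\eps}\| =: K_0 < \infty$.

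Finally, for general $|\theta_j|\le 1$ and finite $F$, the convexity argument above yields $\|\sum_{j\in F}\theta_j u_j^*(x)u_j\| \le 2K_0\|x\|$; the factor $2$ accounts for splitting complex $\theta_j$ into real and imaginary parts, and the real case is a convex combination of sign vectors. Passing to the limit along the partial sums of the convergent series gives \eqref{eqtheta} with $K = 2K_0$. The only mildly delicate step is the pointwise boundedness and bounded-multiplier convergence for a fixed $x$, which is the classical characterization of unconditional convergence in Banach spaces. The rest is a direct application of the Banach--Steinhaus theorem.
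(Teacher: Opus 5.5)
Your proof is correct and follows the same route the paper indicates: the paper only says the lemma follows from the uniform boundedness principle, as in \cite[Proposition 3.1.3]{AK16}. Your argument fills this in with pointwise boundedness of the finite sign-sums, Banach--Steinhaus applied to the operators $T_{F,\eps}$, and the convexity/bounded-multiplier step to pass from signs to arbitrary $|\theta_j|\le 1$, which gives both the bound and the unconditional convergence.
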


In this case we say that  
$\{(u_j, u_j^*)\}_{j=1}^\infty$ 
is a \emph{$K$-unconditional} Schauder frame. In a similar way we define $K$-unconditional bases and basic sequences.

If $\{u_j\}_{j=1}^\infty$ is a $K$-unconditional basis in the space $X$, then it follows from uniqueness of the expansion that if $|\theta_j| = 1$ for all $j$ then also a reverse estimate holds,
\begin{equation}
\|x\|\le K \Big\| \sum_{j=1}^\infty \theta_j u_j^*(x) u_j \Big\|.
\end{equation}

\subsection{}
Following \cite{FOSZ14}, we introduce the notion of approximate Schauder frame.

\begin{definition}
Let $X$ be a separable Banach space with dual space $X^*$. 
A sequence of elements $\{(u_n, u_n^*)\}_{n=1}^\infty \subset X\times X^*$ is called an \emph{approximate Schauder frame} for $X$ if for any $x\in X$ the series
\begin{equation}\label{eq:approx_frame}
Sx = \sum_{n=1}^\infty u_n^*(x) u_n
\end{equation}
converges in $X$ and defines a bounded and invertible linear operator $S: X \to X$. If additionally the series \eqref{eq:approx_frame} converges unconditionally for every $x\in X$, then the system 
$\{(u_n, u_n^*)\}_{n=1}^\infty$ is called an \emph{unconditional approximate Schauder frame}.
\end{definition} 

The main property of an approximate Schauder frame is that it can be turned into a Schauder frame by a change of coordinate functionals. We formulate this fact in the following lemma.

\begin{lemma}\label{lem:approxframe}
If $\{(u_n, u_n^*)\}_{n=1}^\infty$ forms an
unconditional approximate Schauder frame for $X$, 
then $\{(u_n, (S^{-1})^* u_n^*)\}_{n=1}^\infty$
forms an unconditional Schauder frame for $X$.
\end{lemma}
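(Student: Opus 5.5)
The plan is to verify directly that the new functionals reproduce every $x$ through an unconditionally convergent series. Since $S$ is bounded and invertible, $S^{-1}$ is bounded, and its adjoint $(S^{-1})^*\colon X^*\to X^*$ is well defined. Setting $v_n^* = (S^{-1})^* u_n^*$, we have for each $x\in X$
\begin{equation*}
v_n^*(x) = \big((S^{-1})^* u_n^*\big)(x) = u_n^*(S^{-1}x).
\end{equation*}
So the candidate expansion of $x$ is the series $\sum_n u_n^*(S^{-1}x)\, u_n$.

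Put $y = S^{-1}x \in X$. By the definition of an unconditional approximate Schauder frame, the series $\sum_n u_n^*(y) u_n$ converges unconditionally to $Sy$. But $Sy = S S^{-1} x = x$. Combining these,
\begin{equation*}
\sum_{n=1}^\infty v_n^*(x)\, u_n = \sum_{n=1}^\infty u_n^*(y)\, u_n = Sy = x.
\end{equation*}
The two series have the same terms, so the first one also converges unconditionally, and it converges to $x$.

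Every $x\in X$ is therefore expanded as $x=\sum_n v_n^*(x) u_n$ with unconditional convergence, which is exactly the definition of an unconditional Schauder frame. There is no real obstacle here. The only points needing care are that the expansion series is the given one evaluated at $S^{-1}x$ rather than at $x$, and that boundedness of $S^{-1}$ is what guarantees $v_n^*\in X^*$.
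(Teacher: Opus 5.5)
Your proof is correct and is exactly the standard argument the paper has in mind; the paper omits it as straightforward and refers to Lemma 3.1 of FOSZ14. That argument is what you wrote: apply the unconditionally convergent expansion of $S$ at the point $S^{-1}x$, and use the identity $((S^{-1})^*u_n^*)(x)=u_n^*(S^{-1}x)$ together with $S S^{-1}x = x$.
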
 
The proof is straightforward, see \cite[Lemma 3.1]{FOSZ14}.

\subsection{}
We will constantly use Khintchine's inequality. It will be convenient for us to use the language and notation of probability theory.  
If $(\Omega, \PP)$ is a probability space, then the \emph{expectation} (or \emph{mean})
of a function $f \in L^1(\Omega, \PP)$ is defined by
$\E f = \int_{\Omega} f d\PP$.

A \emph{Rademacher sequence} is a sequence of i.i.d.\ random variables $\{\eps_n\}_{n=1}^\infty$ defined on some probability space $(\Omega, \PP)$ such that $\PP(\eps_n = 1) = \PP(\eps_n = -1) = 1/2$ for every $n$.

The classical Khintchine's inequality states that for any $1\le p < \infty$ there exist positive constants $A_p$ and $B_p$ such that for every $N$ and any sequence of scalars $\{a_n\}$,
\begin{equation}\label{eq:Khintch}
A_p \Big( \sum_{n=1}^N |a_n|^2 \Big)^{1/2}\le \Big(\E \Big| \sum_{n=1}^N a_n\eps_n \Big|^p\Big)^{1/p} \le B_p\Big( \sum_{n=1}^N |a_n|^2 \Big)^{1/2}.
\end{equation}
Since $\{\eps_n\}$ is an orthonormal sequence in $L^2(\Omega, \PP)$, we can put $A_p = 1$ for $p\ge 2$ and $B_p = 1$ for $p\le 2$. We refer the reader to \cite[Section 6.2]{AK16} for more information about Khintchine's inequality and averaging in Banach spaces. In particular, we will use the following corollary of Khintchine's inequality, see \cite[Theorem 6.2.13]{AK16}.

\begin{lem}\label{lem:squarefunc}
Let $1\le p < \infty$. For any finite sequence of functions $\{f_j\}_{j=1}^N$ in $L^p(\R)$,
\begin{equation}
A_p \Big\| \Big(\sum_{j=1}^N |f_j|^2\Big)^{1/2} \Big\|_p\le \Big( \E \Big\| \sum_{j=1}^N \eps_j f_j \Big\|_p^p \Big)^{1/p}\le B_p \Big\| \Big(\sum_{j=1}^N |f_j|^2\Big)^{1/2} \Big\|_p,
\end{equation}
where $A_p$ and $B_p$ are the constants from Khintchine's inequality \eqref{eq:Khintch}.
\end{lem}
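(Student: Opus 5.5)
The plan is to apply Khintchine's inequality \eqref{eq:Khintch} pointwise in $x$ and then integrate, using Fubini's theorem to swap the expectation over $\Omega$ with the integral over $\R$.

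First fix the finite family $f_1,\dots,f_N \in L^p(\R)$ and expand
\[
\E \Big\| \sum_{j=1}^N \eps_j f_j \Big\|_p^p = \E \int_\R \Big| \sum_{j=1}^N \eps_j(\omega) f_j(x) \Big|^p dx .
\]
The integrand is a nonnegative jointly measurable function on $\Omega\times\R$: it is a finite sum of products of measurable functions of $\omega$ and of $x$. So Tonelli's theorem lets us rewrite the right-hand side as
\[
\int_\R \E \Big| \sum_{j=1}^N \eps_j f_j(x) \Big|^p dx .
\]
Since only finitely many $\eps_j$ appear, one may take $\Omega=\{-1,1\}^N$ with uniform measure, and then the expectation is simply a finite average.

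Next, for each fixed $x$, apply \eqref{eq:Khintch} with scalars $a_j = f_j(x)$. Raising to the $p$-th power gives
\[
A_p^p \Big(\sum_j |f_j(x)|^2\Big)^{p/2} \le \E\Big|\sum_j \eps_j f_j(x)\Big|^p \le B_p^p \Big(\sum_j |f_j(x)|^2\Big)^{p/2}.
\]
Integrating over $x$ yields
\[
A_p^p \Big\|\Big(\sum_j|f_j|^2\Big)^{1/2}\Big\|_p^p \le \E\Big\|\sum_j\eps_jf_j\Big\|_p^p \le B_p^p \Big\|\Big(\sum_j|f_j|^2\Big)^{1/2}\Big\|_p^p ,
\]
and taking $p$-th roots gives the claim with the same constants $A_p$, $B_p$.

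There is no real obstacle here. The only point needing care is the justification of the interchange of expectation and integral. Nonnegativity plus Tonelli handles this; finiteness is not even needed for the equality, and finiteness of both sides then follows from $f_j\in L^p$.
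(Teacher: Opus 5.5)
Your proof is correct and is the standard argument: apply Khintchine's inequality pointwise with $a_j = f_j(x)$, integrate in $x$, and use Tonelli to interchange expectation and integral. The paper does not prove this lemma itself but cites Albiac--Kalton (Theorem 6.2.13), whose proof proceeds exactly this way.
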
 

Another useful consequence of Khintchine's inequality is the fact that the space $L^p(\R)$, $1\le p\le 2$, has cotype 2 which we formulate in the following lemma; see \cite[Theorem 6.2.14]{AK16} for the proof.

\begin{lemma}
\label{lem:cotype}
Let $1\le p \le 2$. There exists a positive constant $C_p$ such that for any finite 
sequence of functions $f_1, f_2, \ldots, f_N \in L^p(\R)$ we have 
\begin{equation}
\label{eq:cotype}
\Big( \sum_{j=1}^N \|f_j\|_p^2 \Big)^{1/2}\leq C_p \, \E \Big\|\sum_{j=1}^N \eps_j f_j\Big\|_p.
\end{equation}
\end{lemma}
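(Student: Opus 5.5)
The statement to prove is Lemma~\ref{lem:cotype}: $L^p(\R)$, $1\le p\le 2$, has cotype $2$. The plan is to combine Lemma~\ref{lem:squarefunc} with two elementary norm inequalities, working pointwise through the square function.

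\emph{Step 1: pass to the square function.} Since $p \ge 1$, Kahane's inequality (the equivalence of the $L^1$ and $L^p$ moments of Rademacher sums in a Banach space) bounds $\E\|\sum \eps_j f_j\|_p$ from below by $\kappa_p(\E\|\sum\eps_j f_j\|_p^p)^{1/p}$. Applying Lemma~\ref{lem:squarefunc} to the latter moment gives
\[
\E \Big\|\sum_{j=1}^N \eps_j f_j\Big\|_p \ \ge\ \kappa_p A_p \Big\| \Big(\sum_{j=1}^N |f_j|^2\Big)^{1/2}\Big\|_p .
\]
Kahane's inequality is the one outside ingredient. To stay inside $L^p$ instead, I would apply Khintchine's inequality \eqref{eq:Khintch} with exponent $1$ pointwise in $x$, giving $\E|\sum \eps_j f_j(x)|\ge A_1(\sum|f_j(x)|^2)^{1/2}$. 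Taking $L^p$ norms in $x$ and using Minkowski's integral inequality $\|\E F\|_p\le \E\|F\|_p$, which holds since $p\ge1$, yields the same display with constant $A_1$.

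\emph{Step 2: compare the square function with $\ell^2$ of the norms.} It remains to show
\[
\Big(\sum_j \|f_j\|_p^2\Big)^{1/2}\le \Big\| \Big(\sum_j |f_j|^2\Big)^{1/2}\Big\|_p .
\]
Write $\|f_j\|_p^2=\big\||f_j|^p\big\|_{L^1}^{2/p}$, so that the left-hand side is $\big(\sum_j \||f_j|^p\|_{1}^{q}\big)^{1/2}$ with $q=2/p\ge1$. By Minkowski's inequality for the $L^1$ norm of the $\ell^q$-valued function $(|f_j|^p)_j$, that is, the integral triangle inequality in $\ell^q$, we get
\[
\Big(\sum_j \||f_j|^p\|_1^q\Big)^{1/q}\le \Big\|\Big(\sum_j |f_j|^{pq}\Big)^{1/q}\Big\|_1=\int\Big(\sum_j|f_j|^2\Big)^{p/2}.
\]
Raising both sides to the power $1/p$ gives exactly the desired inequality.

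\emph{Conclusion.} Combining the two steps gives \eqref{eq:cotype} with $C_p=1/A_1$ (or $1/(\kappa_pA_p)$ in the Kahane route). The only point that needs care is the direction of Minkowski's inequality in Step 2, which works precisely because $q=2/p\ge1$, that is, because $p\le 2$. This is where the restriction on $p$ enters, and I expect it to be the main step to get right; the rest is routine.
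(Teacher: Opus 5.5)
Your argument is correct, and it is essentially the standard proof behind the paper's citation (Albiac--Kalton, Theorem 6.2.14), which the paper does not reprove. That proof bounds $\E\|\sum_j \eps_j f_j\|_p$ from below by the square function via Khintchine, and then applies the reverse of Lemma~\ref{lem:mink}, which holds for $p\le 2$ exactly as you show using Minkowski's inequality in $\ell^{2/p}$. The only difference is cosmetic: you apply Khintchine with exponent $1$ pointwise and then the triangle inequality in $L^p$, which bypasses Kahane's inequality and gives $C_p = 1/A_1$.
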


On the other hand, if $2\le p < \infty$, then the space
$L^p(\R)$ has type 2; this fact is formulated as follows.

\begin{lemma}
\label{lem:type}
Let $2\le p < \infty$. There exists a positive constant $C_p$ such that for any finite 
sequence of functions $f_1, f_2, \ldots, f_N \in L^p(\R)$ we have 
\begin{equation}
\label{eq:type}
\E \Big\|\sum_{j=1}^N \eps_j f_j\Big\|_p\le C_p\Big( \sum_{j=1}^N \|f_j\|_p^2 \Big)^{1/2}.
\end{equation}
\end{lemma}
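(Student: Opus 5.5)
The plan is to derive the type 2 inequality from the square function estimate of \lemref{lem:squarefunc}, followed by the triangle inequality in $L^{p/2}(\R)$. Note first that $\E \|\cdot\|_p \le (\E\|\cdot\|_p^p)^{1/p}$ by H\"older's (or Jensen's) inequality on the probability space. It therefore suffices to bound the $p$-th moment, and \lemref{lem:squarefunc} gives
\begin{equation*}
\E \Big\|\sum_{j=1}^N \eps_j f_j\Big\|_p \le \Big(\E \Big\|\sum_{j=1}^N \eps_j f_j\Big\|_p^p\Big)^{1/p} \le B_p \Big\| \Big(\sum_{j=1}^N |f_j|^2\Big)^{1/2}\Big\|_p .
\end{equation*}

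Next, we rewrite the square function norm as
\begin{equation*}
\Big\| \Big(\sum_{j} |f_j|^2\Big)^{1/2}\Big\|_p = \Big\| \sum_{j} |f_j|^2\Big\|_{p/2}^{1/2}.
\end{equation*}
Since $p\ge 2$, we have $p/2\ge 1$, so $\|\cdot\|_{p/2}$ is a genuine norm on $L^{p/2}(\R)$. Minkowski's inequality then gives
\begin{equation*}
\Big\|\sum_j |f_j|^2\Big\|_{p/2}\le \sum_j \big\||f_j|^2\big\|_{p/2} = \sum_j \|f_j\|_p^2 .
\end{equation*}
Taking square roots and combining with the first display yields \eqref{eq:type} with $C_p = B_p$.

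There is no real obstacle in this argument. The only point requiring care is that the direction of Minkowski's inequality is correct precisely because $p/2\ge1$; this is exactly where the hypothesis $p\ge 2$ enters, and it is also why the inequality reverses in the cotype case $p\le 2$ treated in \lemref{lem:cotype}.
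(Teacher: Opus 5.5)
Your proof is correct: Jensen's inequality, then \lemref{lem:squarefunc}, then Minkowski's inequality in $L^{p/2}$ (which is exactly the paper's \lemref{lem:mink}) gives \eqref{eq:type} with $C_p = B_p$. The paper does not prove this lemma itself but cites \cite[Theorem 6.2.14]{AK16}, and your argument is essentially the standard proof given there, so the two approaches coincide.
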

Again, we refer the reader to \cite[Theorem 6.2.14]{AK16} for the proof.

\subsection{}

The following inequality can be proved by an application of Minkowski's integral inequality with exponent $p/2$ (see e.g.\ \cite[p.\ 156]{AK16}).

\begin{lem}\label{lem:mink}
If $2\le p < \infty$, then $\Big\| \Big( \sum_j |f_j|^2\Big)^{1/2} \Big\|_p\le \Big( \sum_j \|f_j\|_p^2 \Big)^{1/2}$.
\end{lem}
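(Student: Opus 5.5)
The plan is to apply Minkowski's integral inequality in the space $L^{p/2}(\R)$, which is a normed space because $p/2 \ge 1$. We may assume the sum over $j$ is finite; the general case then follows by monotone convergence, since both sides increase with the index set.

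First, rewrite the left-hand side through the $L^{p/2}$ norm:
\begin{equation*}
\Big\| \Big( \sum_j |f_j|^2\Big)^{1/2} \Big\|_p^2 = \Big( \int_\R \Big( \sum_j |f_j(x)|^2 \Big)^{p/2} dx \Big)^{2/p} = \Big\| \sum_j |f_j|^2 \Big\|_{p/2}.
\end{equation*}

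Next, apply the triangle inequality in $L^{p/2}(\R)$. This is the finite-sum form of Minkowski's inequality and is valid precisely because $p/2\ge 1$. It gives
\begin{equation*}
\Big\| \sum_j |f_j|^2 \Big\|_{p/2} \le \sum_j \big\| |f_j|^2 \big\|_{p/2} = \sum_j \|f_j\|_p^2,
\end{equation*}
where the last equality is the identity $\| |f|^2 \|_{p/2} = \|f\|_p^2$. Taking square roots yields the claim.

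No step presents a real obstacle. The only point needing care is the hypothesis $p\ge 2$: it is exactly what makes $\|\cdot\|_{p/2}$ a norm, and for $p<2$ the inequality reverses. For an infinite family, the case where the right-hand side is infinite is trivial, and otherwise monotone convergence of the partial sums inside the integral passes the finite estimate to the limit.
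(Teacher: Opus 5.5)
Your proof is correct and follows the paper's approach: the paper deduces the inequality from Minkowski's inequality with exponent $p/2$, which is exactly your identity $\big\| (\sum_j |f_j|^2)^{1/2} \big\|_p^2 = \big\| \sum_j |f_j|^2 \big\|_{p/2}$ followed by the triangle inequality in $L^{p/2}$. Your monotone convergence step for infinitely many $f_j$ correctly handles the extension from finite sums.
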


If $1 \le p \le 2$, then the reverse inequality holds, but we will not use this fact.

It is sometimes convenient to denote the quantity on the left-hand side of the above inequality as $\|(f_j)_j\|_{L^p(\ell^2)}$. This norm on sequences of functions defines the Banach space $L^p(\ell^2)$. If we want to emphasize the domain of definition of the functions $\{f_j\}$, say, they are defined on $\Omega$, we will use the notation $\|(f_j)_j\|_{L^p(\Omega;  \ell^2)}$.

\subsection{}
We will need the following result.

\begin{lem}\label{lem:JO.EXT}
Let $\{f_n\}$ be a $K$-unconditional basic sequence in $L^p(\R)$, $1\le p\le 2$,
such that  $C = \sup_n \|f_n\|_p$ is finite.
Assume that there exist measurable sets 
$E_n\subset\R$ and constants $M$ and $\del>0$, such that 
$\sum_n \1_{E_n}\le M$ a.e., and 
$\|f_n|_{E_n}\|_p\ge\delta$ for every $n$. 
Then $\{f_n\}$ is equivalent to the standard unit vector basis of $\ell^p$, namely,
\begin{equation}\label{eq:2.6}
\Big\| \sum_n a_n f_n \Big\|_p \asymp \Big( \sum_n |a_n|^p \Big)^{1/p}
\end{equation}
for any sequence of scalars
$\{a_n\}$ with only finitely many nonzero terms.
Moreover, the implied constants in \eqref{eq:2.6} depend only on $p$, $K$, $C$, $M$ and $\delta$, and otherwise not on the specific sequence of functions $\{f_n\}$. 
\end{lem}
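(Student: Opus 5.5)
We have to prove \lemref{lem:JO.EXT}: a bounded $K$-unconditional basic sequence in $L^p$, $1\le p\le2$, whose members have mass at least $\delta$ on sets $E_n$ of bounded overlap, is equivalent to the $\ell^p$ unit vector basis. The plan is to get each inequality in \eqref{eq:2.6} separately, using unconditionality to pass to Rademacher averages and then \lemref{lem:squarefunc} to replace these by square functions. For finitely supported $\{a_n\}$, unconditionality gives, for every choice of signs,
\[
\Big\|\sum_n a_n f_n\Big\|_p \asymp_K \Big\|\sum_n \eps_n a_n f_n\Big\|_p .
\]
Averaging the $p$-th powers over signs and applying \lemref{lem:squarefunc} then yields
\[
\Big\|\sum_n a_n f_n\Big\|_p \asymp \Big\|\Big(\sum_n |a_n|^2|f_n|^2\Big)^{1/2}\Big\|_p ,
\]
with constants depending only on $p$ and $K$.

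\emph{Upper bound.} Since $p/2\le 1$, we have the pointwise inequality $(\sum_n |b_n|^2)^{p/2}\le \sum_n |b_n|^p$. Applying it with $b_n=a_nf_n$ and integrating gives
\[
\Big\|\Big(\sum_n |a_nf_n|^2\Big)^{1/2}\Big\|_p^p \le \sum_n |a_n|^p\|f_n\|_p^p \le C^p\sum_n|a_n|^p .
\]
This uses only $\sup_n\|f_n\|_p\le C$.

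\emph{Lower bound.} First restrict the square function to the sets $E_n$: the square function dominates
\[
\Big(\sum_n |a_n|^2|f_n|^2\1_{E_n}\Big)^{1/2}.
\]
At each point $x$, at most $M$ indices $n$ have $x\in E_n$. Hence the sum at $x$ has at most $M$ nonzero terms, and for such a sum the reverse comparison of $\ell^2$ and $\ell^p$ norms holds with a constant depending only on $M$:
\[
\sum_n |b_n|^p\1_{E_n}(x) \le M^{1-p/2}\Big(\sum_n |b_n|^2\1_{E_n}(x)\Big)^{p/2},
\]
by H\"older's inequality on at most $M$ terms. Integrating in $x$ gives
\[
\Big\|\Big(\sum_n |a_nf_n|^2\1_{E_n}\Big)^{1/2}\Big\|_p^p \ge M^{-(1-p/2)}\sum_n |a_n|^p\|f_n|_{E_n}\|_p^p \ge M^{p/2-1}\delta^p\sum_n|a_n|^p .
\]

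Combining the two bounds gives \eqref{eq:2.6}, with constants depending only on $p,K,C,M,\delta$. None of the steps is really an obstacle. The main point is the lower bound, where bounded overlap turns the pointwise $\ell^2$ norm into an $\ell^p$ norm; this uses the finite-overlap H\"older inequality rather than any cotype argument. For $p=1$, \lemref{lem:squarefunc} still holds with $A_1>0$, so the same argument applies.
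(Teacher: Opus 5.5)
Your proof is correct and is essentially the argument the paper has in mind; the paper defers the proof to Johnson--Odell. Unconditionality together with \lemref{lem:squarefunc} reduces everything to the square function $\|(\sum_n |a_n f_n|^2)^{1/2}\|_p$. The upper bound then comes from $\ell^p\subset\ell^2$ pointwise, and the lower bound from restricting to the sets $E_n$. In the disjoint case, the restricted square function equals $(\sum_n|a_n|^p\|f_n|_{E_n}\|_p^p)^{1/p}$ exactly; your H\"older step on at most $M$ terms is the only modification needed for bounded overlap, and it costs just the factor $M^{1/2-1/p}$.
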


This result is a straightforward generalization of 
\cite[Section 3, Lemma 2]{JO74}, and can be
proved in essentially the same way.
In \cite{JO74} the lemma was stated such that the sets
$E_n$ are required to be pairwise disjoint.
The condition $\sum_n \1_{E_n}\le M$ a.e.\ 
can be interpreted as a relaxation of this requirement:
 it means that almost every point of 
the real line can lie in no more than $M$ of the sets $E_n$.

\subsection{}

If we are given a (finite or infinite) sequence of Banach spaces $\{X_n\}$, then its direct $\ell^p$-sum is defined as
\begin{equation}
\label{eq:DIR.LP}
\Big(\bigoplus_n X_n\Big)_p = \Big\{(x_n) : x_n\in X_n \; \text{and} \; \|(x_n)\|_p := \Big(\sum_n \|x_n\|_{X_n}^p\Big)^{1/p} < \infty\Big\}.
\end{equation}
See \cite[II.B.21]{Woj91} for some information on direct sums of Banach spaces.

We will be mostly interested in the case where $X_n = H_n$ is a separable Hilbert space  (either finite or infinite dimensional) for every $n$. If all the spaces $H_n$ are infinite dimensional and there are infinitely many of them, then the direct sum \eqref{eq:DIR.LP} is the Banach space 
$\ell^p(\ell^2)=(\ell^2\oplus\ell^2\oplus\cdots)_p$; otherwise, it is a closed subspace of $\ell^p(\ell^2)$. In particular, for $p\neq 2$ it follows that this space is not isomorphic to $L^p$ due to the following classical result.

\begin{lem}\label{lem:noniso}
If $1 \le p < \infty$, $p\neq 2$, then the space $L^p$ is not isomorphic to a closed subspace of $\ell^p(\ell^2)$.
\end{lem}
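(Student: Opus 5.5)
The plan is to find, inside $L^p$, a normalized weakly null sequence with the following property: no subsequence of it is equivalent to the unit vector basis of any of the few "model" sequence spaces that weakly null sequences in $\ell^p(\ell^2)$ can generate. The first step, which serves both cases $p<2$ and $p>2$, is a structural description of normalized weakly null sequences $(y_k)$ in $\ell^p(\ell^2)=(\bigoplus_n H_n)_p$. Write $y_k=(y_{k,n})_n$ with $y_{k,n}\in H_n=\ell^2$, and consider the outer profile $\phi_k=(\|y_{k,n}\|)_n$, which lies in the unit sphere of $\ell^p$. Using weak nullity, a gliding hump argument and a diagonal argument, we pass to a subsequence with the following properties:
\begin{enumerate-num}
\item $\phi_k\to\phi$ coordinatewise;
\item for a finite set $F$ of outer indices, the restrictions $u_k=y_k|_F$ are small perturbations of vectors whose components in each $H_n$, $n\in F$, are pairwise disjoint blocks, with profiles close to $\phi|_F$;
\item the remainders $v_k=y_k-u_k$ are essentially supported on pairwise disjoint sets of outer indices.
\end{enumerate-num}
A perturbation argument then gives the equivalence
\begin{equation*}
\Big\|\sum_k a_k y_k\Big\| \asymp \max\Big( \|\phi\|_p\Big(\sum_k |a_k|^2\Big)^{1/2},\ c\Big(\sum_k|a_k|^p\Big)^{1/p}\Big),
\end{equation*}
with a \emph{constant} weight $\|\phi\|_p$ on the $\ell^2$ part and some $c\ge 0$. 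Here the ``$\asymp$'' is up to a perturbation that I expect to control using $\ell^p$-disjointness in the outer index and orthogonality in the inner one. In summary, every such subsequence is equivalent to the unit vector basis of $\ell^2$, $\ell^p$, or $\ell^2\cap\ell^p$ with the max-norm. This step is the main technical obstacle, mostly bookkeeping of the $\varepsilon$-perturbations.

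For $1\le p<2$, fix $r\in(p,2)$. Sequences of i.i.d.\ symmetric $r$-stable random variables span $\ell^r$ isometrically in $L^p$, and their unit vectors are weakly null. If $L^p$ embedded into $\ell^p(\ell^2)$, the image of this sequence would be weakly null and seminormalized. By the first step, some subsequence of it would then be equivalent to the unit vector basis of $\ell^2$, $\ell^p$ or $\ell^2\cap\ell^p$. Testing with $a_k=1$ for $k\le N$ gives the growth $N^{1/r}$ on one side, against $N^{1/2}$, $N^{1/p}$ or $\max(N^{1/2},N^{1/p})$ on the other. Since $r\notin\{p,2\}$, this is a contradiction.

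For $2<p<\infty$, every subspace of $L^p$ that is spanned by a weakly null sequence generates only $\ell^2/\ell^p$-type norms, so we must use non-constant weights instead. Take independent symmetric three-valued random variables $f_k$ normalized in $L^p$, with $w_k=\|f_k\|_2\to 0$ slowly, for instance so that $\sum_k w_k^{2p/(p-2)}=\infty$ along every subsequence, which is achieved by letting $w_k$ decrease slowly enough. By Rosenthal's inequality,
\begin{equation*}
\Big\|\sum_k a_k f_k\Big\|_p\asymp\max\Big(\Big(\sum_k|a_k|^p\Big)^{1/p},\Big(\sum_k|a_k|^2w_k^2\Big)^{1/2}\Big),
\end{equation*}
and these $f_k$ are weakly null. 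It is a classical part of Rosenthal's analysis of the spaces $X_p$ that, under this divergence condition, no subsequence of this basis is equivalent to the unit vector basis of $\ell^p$, of $\ell^2$, or of $\ell^2\cap\ell^p$ with a constant weight. One checks this by testing on block averages adapted to the level sets of $w_k$: the $\ell^2$ part is neither negligible nor comparable to a constant-weight $\ell^2$ norm. Comparing with the first step yields the contradiction. The delicate point here is making the choice of $(w_k)$ robust under passage to subsequences, and I would first verify this on dyadic blocks where $w_k\approx 2^{-j}$.
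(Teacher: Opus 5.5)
Your treatment of $1\le p<2$ is sound. $L^p$ contains a copy of $\ell^r$, $p<r<2$, spanned by a weakly null sequence of i.i.d.\ $r$-stable variables. Your gliding-hump analysis, once made precise, shows that every seminormalized weakly null sequence in $\ell^p(\ell^2)$ has a subsequence equivalent to the unit vector basis of $\ell^2$ or $\ell^p$. (Item 3 needs repair: the remainders share a small common part coming from $\phi$ outside $F$. Also, for $p<2$ your max-norm is just the $\ell^p$ norm.) The $N^{1/r}$ test then gives the contradiction. This also handles $p=1$, where the paper instead appeals to the fact that $L^1$ does not embed into a space with an unconditional basis. The paper gives no proof of the lemma and relies on Lindenstrauss--Pe{\l}czy\'nski for $1<p<2$ and for $p>2$.

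For $2<p<\infty$ there is a genuine gap, and it is fatal to the strategy, not a bookkeeping issue. The condition you want on the weights cannot hold. If $w_k\to 0$, you can extract $k_1<k_2<\cdots$ with $w_{k_j}\le 2^{-j}$, so $\sum_j w_{k_j}^{2p/(p-2)}<\infty$. By H\"older, $\bigl(\sum_j|a_j|^2w_{k_j}^2\bigr)^{1/2}\le \|(w_{k_j})\|_{\ell^{2p/(p-2)}}\|a\|_{\ell^p}$, so Rosenthal's inequality gives $\|\sum_j a_jf_{k_j}\|_p\asymp\|a\|_{\ell^p}$. Thus a subsequence equivalent to the $\ell^p$ basis always exists. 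More fundamentally, no choice of sequence can work. By the Kadec--Pe{\l}czy\'nski dichotomy, every seminormalized weakly null sequence in $L^p$, $p>2$, has a subsequence equivalent to the unit vector basis of $\ell^2$ or of $\ell^p$. These are exactly the types your structural step produces in $\ell^p(\ell^2)$ when $p>2$, since $\max(c\|a\|_2,c'\|a\|_p)\asymp\|a\|_2$ as soon as $c>0$. Even the full span of your sequence, Rosenthal's space $X_{p,w}$, embeds into $\ell^p\oplus\ell^2\subset\ell^p(\ell^2)$ via $f_k\mapsto(e_k,w_ke_k)$, because the norm there is exactly $\max\bigl(\|a\|_p,(\sum|a_k|^2w_k^2)^{1/2}\bigr)$. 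So for $p>2$ you need an invariant that sees more of $L^p$ than a single weakly null sequence and its span. That is the content of the Lindenstrauss--Pe{\l}czy\'nski theorem the paper cites for this case, and your proposal supplies no substitute.
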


This fact can be found in \cite[Example 8.2]{LP68} for $1<p < 2$, and in \cite[Theorem 6.1]{LP71} for $p > 2$. The case $p=1$ may be deduced from the fact that the space $L^1$ is not isomorphic to a closed subspace of any space with
an unconditional basis, see \cite[Section II.D, Theorem 10]{Woj91}).

\subsection{}
It is well known that if $\SSS \sbt \R$ is a uniformly discrete set, then there 
exists a constant $C = C(S) > 0$ such that for any interval $I\subset\R$ of unit length, and for arbitrary scalars $\{a_s\}_{s\in\SSS}$ such that only finitely many of them are nonzero, the estimate 
\begin{equation}
\Big\| \sum_{s\in \SSS} a_se_s \Big\|_{L^2(I)}\le C \Big( \sum_{s\in \SSS} |a_s|^2 \Big)^{1/2}
\end{equation}
holds. In order to deal with non-uniformly discrete sets $\SSS$ we will need a generalization of this result, see \cite[Section 5, p.14]{LT25b} for the proof. 

\begin{lem}
\label{lem:bess}
For any $\delta > 0$ there exists a constant $C(\delta) > 0$ with the following property. Given an arbitrary countable set $\SSS\subset\R$, denote $\SSS_l = \SSS\cap [l\delta, (l+1)\delta)$, $l\in\Z$. Then for any interval $I\subset\R$ of unit length, and for arbitrary scalars
$\{a_s\}_{s\in \SSS}$ we have
\begin{equation}
\label{eq:as.es}
\Big\| \sum_{s\in \SSS} a_se_s \Big\|_{L^2(I)}\le C(\delta)  \Big(\sum_{l\in\Z} \Big( \sum_{s\in \SSS_l}|a_s| \Big)^2\Big)^{1/2}
\end{equation}
provided that
the quantity on the right hand side is finite. In this case,
the series on the left hand side converges unconditionally in $L^2(I)$.
\end{lem}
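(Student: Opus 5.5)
The plan is to dominate the $L^2(I)$ norm by a smooth weighted $L^2$ norm. The weight is chosen to have compactly supported Fourier transform, so that expanding the square produces only interactions between frequencies at distance at most $\delta$. Such interactions are confined to adjacent blocks $\SSS_l$, and these can be controlled by the Cauchy--Schwarz inequality.

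\emph{Step 1: reduction to $I=[0,1]$ and finite sums.} If $I=[c,c+1]$, substituting $x\mapsto x+c$ replaces $a_s$ by $a_s e^{2\pi i s c}$. This leaves every $|a_s|$, and hence the right-hand side of \eqref{eq:as.es}, unchanged. So it suffices to take $I=[0,1]$. I will first prove the inequality for finitely supported $\{a_s\}$, with a constant depending only on $\delta$.

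\emph{Step 2: constructing the weight.} Let $h(x)=\big(\sin(\pi\delta x/2)/(\pi\delta x/2)\big)^2$. Then $\ft h$ is a triangle function supported in $[-\delta/2,\delta/2]$, and $h\ge c_\delta>0$ on an interval $J$ around $0$ of length comparable to $1/\delta$. Put $\psi_0=h^2\ge 0$. Its Fourier transform is $\ft h*\ft h$, which is supported in $[-\delta,\delta]$, and $\psi_0\ge c_\delta^2$ on $J$.

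Cover $[0,1]$ by $N=N(\delta)$ translates $J+y_k$ and set $\psi=c_\delta^{-2}\sum_k \psi_0(\cdot-y_k)$. Then $\psi\ge \1_{[0,1]}$, and $\ft\psi$ is still supported in $[-\delta,\delta]$. Moreover $\|\ft\psi\|_\infty\le c_\delta^{-2}N\|\psi_0\|_1=:C_1(\delta)$.

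\emph{Step 3: the main estimate.} For finitely supported coefficients,
\[
\int_0^1\Big|\sum_s a_se_s\Big|^2\le\int_\R\Big|\sum_s a_se_s\Big|^2\psi=\sum_{s,s'}a_s\overline{a_{s'}}\,\ft\psi(s'-s).
\]
A term with $s\in\SSS_l$ and $s'\in\SSS_{l'}$ vanishes unless $|s-s'|\le\delta$, which forces $|l-l'|\le1$. Writing $b_l=\sum_{s\in\SSS_l}|a_s|$, the sum is therefore at most
\[
C_1(\delta)\sum_{|l-l'|\le1}b_lb_{l'}\le 3C_1(\delta)\sum_l b_l^2,
\]
where the last step uses $b_lb_{l'}\le\tfrac12(b_l^2+b_{l'}^2)$. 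This gives \eqref{eq:as.es} with $C(\delta)=(3C_1(\delta))^{1/2}$.

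\emph{Step 4: the infinite case.} Suppose the right-hand side is finite. Apply the finite inequality to an arbitrary finite set $F\sbt\SSS$ lying outside a large finite set $F_0$. This bounds $\|\sum_{s\in F}a_se_s\|_{L^2(I)}$ by $C(\delta)$ times the right-hand side computed for the coefficients $a_s\1_{\SSS\setminus F_0}(s)$. That quantity tends to $0$ as $F_0$ increases, by dominated convergence over $l$ together with absolute convergence inside each $\SSS_l$. This is the Cauchy criterion for unconditional convergence, and passing to the limit gives \eqref{eq:as.es} in general.

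The only delicate point is Step 2: the weight must simultaneously dominate $\1_I$ and have Fourier support of width at most $\delta$. The trick of squaring a Fejér-type kernel and summing finitely many translates handles both requirements.
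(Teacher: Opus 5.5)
Your proof is correct and complete. The paper does not actually prove this lemma; it defers to \cite[Section 5]{LT25b}, so there is no in-paper argument to compare yours against.

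Your route dominates $\1_I$ by a nonnegative weight $\psi$ with $\supp \ft{\psi} \sbt [-\del,\del]$. After expanding $\int |\sum_s a_s e_s|^2 \psi$, only pairs of frequencies from the same or adjacent blocks $\SSS_l$ survive, and these are controlled by $\sum_l b_l^2$. The argument is self-contained. It also yields the uniformly discrete Bessel estimate stated just before the lemma as the special case in which each $\SSS_l$ has at most one point, rather than depending on it.

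A common alternative goes the other way. For $s \in \SSS_l$, write $e_s = e_{l\del}\, e_{s-l\del}$, Taylor-expand $e_{s-l\del}$ on $[0,1]$, and apply the Bessel inequality for $\del\Z$ at each order. This reduces the lemma to the uniformly discrete case at the cost of a factor $e^{2\pi\del}$.

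Two cosmetic remarks:
\begin{enumerate}
\item Squaring $h$ is unnecessary: $h$ is already nonnegative and integrable, and $\ft{h}$ is supported in $[-\del/2,\del/2]$.
\item Since $h \ge 4/\pi^2$ on $[-1/\del, 1/\del]$, you can take $c_\del$ to be an absolute constant.
\end{enumerate}
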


\subsection{}
A sequence of positive real numbers $\{s_n\}_{n=1}^{\infty}$ is called \emph{(Hadamard) lacunary} if there is a real number $\lambda>1$ such that $s_{n+1}/s_n \ge \lambda$ for all $n$. It is well known that in this case, the exponential system $\{e_{s_n}\}_{n=1}^{\infty}$ behaves 
similar to a Rademacher sequence. Specifically, we will use the following fact,
see \cite[Chapter V, Theorem 8.20]{Zyg59}, i.e.\ a version of Khintchine's inequality for lacunary trigonometric series.

\begin{lem}
\label{lem:lacun}
Let $\{s_n\}_{n=1}^{\infty}$ be 
a sequence of positive integers such  that $s_{n+1}/s_n \ge \lambda > 1$ for all $n$. Then for any $1 \le p < \infty$, 
every $N$ and  any sequence of scalars $\{a_n\}$,
\begin{equation}
\label{eq:kh:lac}
A_{p,\lam} \Big( \sum_{n=1}^N |a_n|^2 \Big)^{1/2}\le \Big(
\int_{0}^{1}  \Big| \sum_{n=1}^N  a_n e_{s_n}(x)  \Big|^p dx \Big)^{1/p} \le B_{p, \lam} \Big( \sum_{n=1}^N |a_n|^2 \Big)^{1/2}
\end{equation}
for some positive constants $A_{p,\lam}$ and $B_{p, \lam}$ which depend on $p$ and $\lam$ only.
\end{lem}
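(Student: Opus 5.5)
This is the Zygmund lacunary Khintchine inequality, \lemref{lem:lacun}, for Hadamard lacunary integer frequencies $s_{n+1}/s_n\ge\lam>1$. I would prove it in the classical way: first reduce to $\lam\ge 3$, then compare with Rademacher-type behaviour via $L^4$ and Riesz products. Since the functions are $1$-periodic and $\{e_{s_n}\}$ is orthonormal in $L^2[0,1]$, the $L^2$ norm equals $(\sum|a_n|^2)^{1/2}$.

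\textbf{Step 1: reduction to $\lam\ge 3$.} Choose $q$ with $\lam^q\ge 3$ and split $\{s_n\}$ into the $q$ subsequences $\{s_{n}\}_{n\equiv r \bmod q}$. Each subsequence has ratio at least $3$. The upper bound for the full sum then follows from the triangle inequality, at the cost of a factor $\sqrt q$ by Cauchy--Schwarz.

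\textbf{Step 2: upper bound for $p>2$.} For $\lam\ge3$, expand $|\sum a_ne_{s_n}|^4$. The number of representations $s_i+s_j=s_k+s_l$ with $\{i,j\}\neq\{k,l\}$ is controlled by lacunarity: if $i\ge j$ and $k\ge l$, the ratio condition forces $s_i=s_k$ and $s_j=s_l$, so only trivial coincidences occur. Hence $\|f\|_4\le C\|f\|_2$. For general even $p=2m$, count representations as sums of $m$ terms; lacunarity with a large ratio bounds the multiplicity by a constant $C(m,\lam)$, giving $\|f\|_{2m}\le C\|f\|_2$. By monotonicity of $L^p[0,1]$ norms this covers every $p\ge2$ with $B_{p,\lam}$ depending only on $p$ and $\lam$; for $p\le2$ one can take $B_{p,\lam}=1$.

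\textbf{Step 3: lower bound.} For $p\ge2$ the lower bound is trivial with $A_{p,\lam}=1$. For $1\le p<2$, apply Hölder interpolation between $L^p$ and $L^4$:
\[
\|f\|_2\le \|f\|_p^{\theta}\|f\|_4^{1-\theta}.
\]
Combined with $\|f\|_4\le C\|f\|_2$ from Step 2, this gives $\|f\|_2\le C'\|f\|_p$. The resulting constant depends only on $p$ and $\lam$, since the $L^4$ bound in Step 2 already holds for the full sequence with ratio $\lam$ (with a constant depending on $\lam$), so no further decomposition is needed.

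\textbf{Main obstacle.} The difficulty is the combinatorial counting in Step 2. For ratio close to $1$, the number of representations of an integer as a sum $\pm s_{n_1}\pm\dots\pm s_{n_m}$ is bounded by a constant depending on $m$ and $\lam$, but proving this requires the standard argument: the largest index is determined up to $O(\log_\lam m)$ choices by the size of the integer. Alternatively, Riesz products $\prod(1+\operatorname{Re}(\bar{\eta}_n e_{s_n}))$ for $\lam\ge3$ give the $L^1$ lower bound directly. Since this is a classical result, the paper simply cites \cite[Chapter V, Theorem 8.20]{Zyg59}, and I would follow that reference.
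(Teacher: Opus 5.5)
Your proposal is correct. The paper does not prove this lemma; it only cites Zygmund (Chapter V, Theorem 8.20). Your sketch therefore supplies a self-contained version of the classical argument. Splitting into $q$ subsequences of ratio at least $\lambda^q$ and using the triangle inequality with orthogonality gives the $L^{2m}$ upper bound for the full sequence, at a loss of $\sqrt q$. Log-convexity, $\|f\|_2\le\|f\|_p^{\theta}\|f\|_4^{1-\theta}$, then turns that $L^4$ bound into the lower bound for $1\le p<2$. So neither Riesz products nor any separate treatment of the lower bound is needed.

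There is one inaccurate statement, in your closing remark. With mixed signs, the number of representations of an integer as $\pm s_{n_1}\pm\dots\pm s_{n_m}$ is \emph{not} bounded in terms of $m$ and $\lambda$. For instance, take $s_1=1$, $s_2=2$ and $s_n=s_{n-1}+s_{n-2}+1$. All ratios exceed $3/2$, yet $1=s_n-s_{n-1}-s_{n-2}$ for every $n\ge 3$. This does not affect your proof. The identity $\|f\|_{2m}^{2m}=\|f^m\|_2^2$ only involves representations $N=s_{n_1}+\dots+s_{n_m}$ with all plus signs. There are at most $C(m,\lambda)$ of these, because the largest term lies in $[N/m,N]$. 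After splitting into subsequences of ratio greater than $m$, such a representation is even unique up to order. This gives $\|f\|_{2m}^{2m}\le m!\,\big(\sum|a_n|^2\big)^m$ on each subsequence.
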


% =======================================

\section{The case $1 <p < 2$: Unconditional Gabor basic sequences}
\label{sec:basesp<2}

In this section we prove that a Gabor system
$\G(g;\Lambda)$ such that 
$\Lambda\subset \TTT\times \R$ for some uniformly discrete set $\TTT\subset\R$,
cannot form an unconditional basis 
 in the space $L^p(\R)$, $1<p<2$.
We obtain this as a consequence of a more
general result, about unconditional Gabor basic sequences in
$L^p(\R)$.

\subsection{}
We start by stating our result about unconditional Gabor basic sequences.
 To this end, observe that
any countable set $\Lambda\subset \R\times\R$ may be represented in the form
\begin{equation}\label{eq:Lambda}
\Lambda=\bigcup_{t\in \TTT} \{t\}\times \SSS_t,
\end{equation}
where $\TTT \sbt \R$ is a finite or countable set, and each $\SSS_t\subset\R$ is a finite or countable nonempty set.

We denote by $\ell^2_{\SSS_t}$ the space $\ell^2$ of sequences indexed by the set $\SSS_t$.

\begin{thm}
\label{thm:uncseq_p<2}
Let $1\le p \le 2$ and let $\Lambda$ be a set of the form \eqref{eq:Lambda}, where $\TTT \sbt \R$ is a uniformly discrete set. Suppose that for some $g\in L^p(\R)$, the system $\G(g;\Lambda)$ forms an unconditional basic sequence in $L^p(\R)$. Then the system $\G(g;\Lambda)$  is equivalent to the standard unit vector basis of the space $(\bigoplus_{t\in \TTT} \ell^2_{\SSS_t})_p$.
\end{thm}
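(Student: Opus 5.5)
Let $\{f_{t,s}\}$, $f_{t,s} = e_s\ts_t g$, be $K$-unconditional with $\|f_{t,s}\|_p = \|g\|_p$. The goal is the two-sided estimate
\[
\Big\|\sum_{t,s} a_{t,s} f_{t,s}\Big\|_p \asymp \Big(\sum_{t\in\TTT}\Big(\sum_{s\in\SSS_t}|a_{t,s}|^2\Big)^{1/2\cdot p}\Big)^{1/p}.
\]
I would prove it in two stages: fixed $t$ gives the inner $\ell^2$ structure, and different $t$ combine in $\ell^p$.

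\emph{Step 1: each fibre is $\ell^2$.} Fix $t$. Unconditionality lets us replace $\sum_s a_s f_{t,s}$ by $\E\|\sum_s \eps_s a_s f_{t,s}\|_p$. By Lemma \ref{lem:squarefunc} this is comparable to
\[
\Big\|\Big(\sum_s |a_s|^2|\ts_t g|^2\Big)^{1/2}\Big\|_p = \Big(\sum_s|a_s|^2\Big)^{1/2}\|g\|_p,
\]
because $|e_s|=1$. Hence $\{f_{t,s}\}_{s\in\SSS_t}$ is equivalent to the unit vector basis of $\ell^2_{\SSS_t}$, with constants depending only on $K$ and $p$.

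\emph{Step 2: combine across $t$.} Write $F_t=\sum_s a_{t,s}f_{t,s}$. Averaging with independent signs on the blocks, together with unconditionality, reduces $\|\sum_t F_t\|_p$ to an average over choices of signs within blocks. More concretely, Lemma \ref{lem:squarefunc} applied to all terms gives
\[
\Big\|\sum_t F_t\Big\|_p \asymp \Big\|\Big(\sum_t \alpha_t^2\,|\ts_t g|^2\Big)^{1/2}\Big\|_p, \qquad \alpha_t=\Big(\sum_s|a_{t,s}|^2\Big)^{1/2}.
\]
So the problem becomes: show that $\{\ts_t g\}_{t\in\TTT}$, viewed in the square-function sense, behaves like the $\ell^p$ basis. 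Equivalently, by Lemma \ref{lem:squarefunc} again, the translates $\{\ts_t g\}$ with random-sign coefficients satisfy $\E\|\sum_t\eps_t\alpha_t\ts_t g\|_p\asymp(\sum\alpha_t^p)^{1/p}$.

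Picking one frequency $s_t\in\SSS_t$ for each $t$ gives the subsystem $\{e_{s_t}\ts_t g\}$, which is itself a $K$-unconditional basic sequence. The $\ell^p$ equivalence for it then follows from Lemma \ref{lem:JO.EXT}. For the sets I take $E_t=t+[-R,R]$, where $R$ is chosen so that $\|g|_{[-R,R]}\|_p\ge\|g\|_p/2$. Uniform discreteness of $\TTT$ gives $\sum_t\1_{E_t}\le M(R)$, and $\|f_t|_{E_t}\|_p\ge\delta$. This yields, uniformly in the choice of $s_t$,
\[
\Big\|\sum_t\alpha_t e_{s_t}\ts_t g\Big\|_p\asymp\Big(\sum\alpha_t^p\Big)^{1/p}.
\]
Applying unconditionality with signs, the left side is comparable to $\E\|\sum_t\eps_t\alpha_t\ts_t g\|_p$, since the moduli of the terms do not depend on $s_t$. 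By Lemma \ref{lem:squarefunc} this is the square function $\|(\sum_t\alpha_t^2|\ts_t g|^2)^{1/2}\|_p$. So that quantity is $\asymp(\sum\alpha_t^p)^{1/p}$, which combined with the display above finishes the proof.

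\emph{Main obstacle.} The delicate point is checking that the implied constants do not depend on the coefficients. This is exactly the uniformity clause in Lemma \ref{lem:JO.EXT}. The key observation is that the square function depends only on $|\ts_t g|$, and any single-frequency selection is a valid unconditional subsequence. A minor detail is making $\{t\}$ with $\SSS_t$ infinite fit in; finite truncation handles this.
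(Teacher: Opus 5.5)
Your proposal is correct and follows essentially the same route as the paper: random signs plus Lemma~\ref{lem:squarefunc} reduce the norm to the square function $\|(\sum_t \alpha_t^2|\ts_t g|^2)^{1/2}\|_p$, which depends only on $|\ts_t g|$; this is then identified with $\|\sum_t \alpha_t e_{s_t}\ts_t g\|_p$ for a single-frequency subsystem, and Lemma~\ref{lem:JO.EXT} is applied with translated intervals $E_t$. Your Step~1 (the fibre-wise $\ell^2$ structure) and the detour through $\E\|\sum_t\eps_t\alpha_t\ts_t g\|_p$ are not needed, since both are subsumed by the square function identity, but they do no harm.
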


The elements of the standard unit vector basis of the space $(\bigoplus_{t\in \TTT} \ell^2_{\SSS_t})_p$ are naturally indexed by points of the set $\{(t,s): t\in \TTT, s\in \SSS_t\}$, that is, by the points of $\Lambda$. The equivalence stated in the theorem means that the linear operator which maps $e_s \ts_tg$, $(t,s)\in \Lambda$, to the corresponding element of the unit vector basis of $(\bigoplus_{t\in \TTT} \ell^2_{\SSS_t})_p$ is an isomorphism between the subspace $X = \cspan \, \G(g;\Lambda)$ and $(\bigoplus_{t\in \TTT} \ell^2_{\SSS_t})_p$.
In particular, for $1 \le p < 2$ we obtain that
$X \neq L^p(\R)$ by \lemref{lem:noniso}; hence,
the system $\G(g;\Lambda)$ cannot form
an unconditional basis  in the space $L^p(\R)$.

As an example, let $\TTT=\Z$ and $\SSS = \{1,2,4,8,16,\dots\}$. Then using \lemref{lem:lacun} it is easy to verify that the system $\G(\1_{[0,1]};\TTT\times \SSS)$ forms an unconditional basic sequence in $L^p(\R)$, and
its closed span is isomorphic to $\ell^p(\ell^2)$. \thmref{thm:uncseq_p<2} states that, in a sense, an unconditional basic sequence $\G(g;\Lambda)$ can never span a bigger subspace of $L^p(\R)$.

\thmref{thm:uncseq_p<2} is also true (with the same proof) in the more general case where $\TTT$ is a set of bounded density, i.e.\ a finite union of uniformly discrete sets.

On the other hand, we will present an example showing that 
\thmref{thm:uncseq_p<2} becomes false 
if the uniform discreteness assumption is dropped
(see \thmref{thm:u.b.e.p<2} below).

Finally, we point out that the proof below actually works for arbitrary uni\-modular functions $h_s$ in place of complex exponentials $e_s$, or, even more generally, for any func\-tions $h_s$ such that $A \le |h_s(x)|\le B$ a.e.\ for some positive constants $A,B$.

\subsection{}
We now turn to prove \thmref{thm:uncseq_p<2}. If $\{e_s \ts_t g: (t,s)\in\Lambda\}$ is a $K$-unconditional basic sequence, then for any choice of signs $\theta_{ts}=\pm 1$ and any complex scalars $a_{ts}$ such that only finitely many of them are nonzero, we have
\begin{equation}\label{eq:3.2}
\frac{1}{K}\Big\|\sum_{(t,s)\in\Lambda}  a_{ts} e_s \ts_t g\Big\|_p\le \Big\|\sum_{(t,s)\in\Lambda} \theta_{ts} a_{ts} e_s \ts_t g\Big\|_p\le K 	\Big\|\sum_{(t,s)\in\Lambda}  a_{ts} e_s \ts_t g\Big\|_p.
\end{equation}
Let $\{ \eps_{ts} \}$ be a Rademacher sequence, then by \lemref{lem:squarefunc} we get
\begin{gather}
A_p\Big\| \Big( \sum_{(t,s)\in \Lambda}  |a_{ts}|^2  |e_s \ts_t g|^2 \Big)^{1/2} \Big\|_p \le \Big(\E \Big\|\sum_{(t,s)\in\Lambda} \eps_{ts} a_{ts} e_s \ts_t g\Big\|^p_p\Big)^{1/p} \\
\le B_p \Big\| \Big( \sum_{(t,s)\in \Lambda}  |a_{ts}|^2  |e_s \ts_t g|^2 \Big)^{1/2} \Big\|_p.
\end{gather}
If we put the random signs $\eps_{ts}$ instead of $\theta_{ts}$ into the inequalities \eqref{eq:3.2} and apply the above estimate, we conclude that 
\begin{equation}
\Big\|\sum_{(t,s)\in\Lambda}  a_{ts} e_s \ts_t g\Big\|_p \asymp \Big\| \Big( \sum_{(t,s)\in \Lambda}  |a_{ts}|^2  |e_s \ts_t g|^2 \Big)^{1/2} \Big\|_p.
\end{equation}
Now for each $t\in \TTT$ we choose and fix some element $s_t\in \SSS_t$. Obviously, for every $s\in \SSS_t$ we have the pointwise equality $|e_s \ts_tg| = |e_{s_t} \ts_tg|$. Therefore, 
\begin{equation}
\Big\| \Big( \sum_{(t,s)\in \Lambda}  |a_{ts}|^2  |e_s \ts_t g|^2 \Big)^{1/2} \Big\|_p = \Big\|\Big( \sum_{t\in \TTT} \Big( \sum_{s\in \SSS_t} |a_{ts}|^2 \Big)  |e_{s_t} \ts_t g|^2 \Big)^{1/2}\Big\|_p.
\end{equation}
The system $\{e_{s_t } \tau_t g\}_{t\in \TTT}$ is a subsystem of $\G(g;\Lambda)$; hence, it is also a $K$-unconditional basic sequence. Similarly to the above, we conclude that for arbitrary complex scalars $\{b_t\}$ with only finitely many terms, we have
\begin{equation}
\Big\| \sum_{t\in \TTT} b_t e_{s_t} \ts_t g \Big\|_p\asymp \Big\| \Big(\sum_{t\in \TTT} |b_t|^2  |e_{s_t} \ts_t g|^2 \Big)^{1/2} \Big\|_p.
\end{equation}
In particular, putting
\begin{equation}
b_t =  \Big( \sum_{s\in \SSS_t} |a_{ts}|^2 \Big)^{1/2}
\end{equation}
we get that 
\begin{equation}
\Big\|\Big( \sum_{t\in \TTT} \Big( \sum_{s\in \SSS_t} |a_{ts}|^2 \Big)  |e_{s_t} \ts_t g|^2 \Big)^{1/2}\Big\|_p \asymp \Big\| \sum_{t\in \TTT} \Big( \sum_{s\in \SSS}|a_{ts}|^2 \Big)^{1/2}  e_{s_t} \ts_t g \Big\|_p.
\end{equation}
If $\TTT$ is a uniformly discrete set, or more generally, a finite union of uniformly discrete sets, the system $\{e_{s_t} \ts_t g\}_{t\in \TTT}$ satisfies the conditions of \lemref{lem:JO.EXT}. Indeed, it is enough to consider an interval $I\subset\R$ of unit length such that $\del = \|g|_I\|_p > 0$, and put $E_t = I+t$ for $t\in \TTT$. So an application of \lemref{lem:JO.EXT} yields
\begin{equation}
\Big\| \sum_{t\in \TTT} \Big( \sum_{s\in \SSS}|a_{ts}|^2 \Big)^{1/2}  e_{s_t} \ts_t g \Big\|_p \asymp \Big( \sum_{t\in \TTT} \Big( \sum_{s\in \SSS_t}|a_{ts}|^2 \Big)^{p/2} \Big)^{1/p}.
\end{equation}
Summing up, we proved that
\begin{equation}
\Big\|\sum_{(t,s)\in\Lambda}  a_{ts} e_s \ts_t g\Big\|_p\asymp \Big( \sum_{t\in \TTT} \Big( \sum_{s\in \SSS_t}|a_{ts}|^2 \Big)^{p/2} \Big)^{1/p}, 
\end{equation}
which establishes the assertion of \thmref{thm:uncseq_p<2}.
\qed

\subsection{Example}
We now present an example showing that \thmref{thm:uncseq_p<2} 
becomes false if we drop the assumption that
$\Lambda\subset \TTT\times \R$ for some uniformly discrete set $\TTT\subset\R$.

In fact, in the example below, the set $\Lambda$ \emph{accumulates near a vertical line}. It is reasonable to assume that if the translates are close to one another, then they should ``behave in almost the same way'', and in this case, \thmref{thm:uncseq_p<2} suggests that the unconditional Gabor basic sequence should be equivalent to the standard unit vector basis of $\ell^2$. However, the example shows that this intuition fails for $1 \le p < 2$.

\begin{thm}
\label{thm:u.b.e.p<2}
Given $p$, $1 \le p < 2$, there is a Gabor system $\G(g;\Lambda)$ that forms an unconditional basic sequence in the space $L^p(\R)$, but the system is not equivalent to any sub\-sequence of the standard unit vector basis of $\ell^p(\ell^2)$.
\end{thm}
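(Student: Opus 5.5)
The plan is to build $\G(g;\Lambda)$ whose norm is a Rosenthal-type mixture of an $\ell^2$-norm and a weighted $\ell^p$-norm. Such a system cannot be reproduced by any block structure $(\bigoplus\ell^2_{n_k})_p$.

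\emph{Choice of $\Lambda$ and $g$.} I will take translations $t_n\to 0$, pairwise distinct and decreasing fast, so that they accumulate at the vertical line $t=0$. I pair them with a Hadamard lacunary sequence of positive integer frequencies $s_n$ (for instance $s_n=2^{m_n}$ with $m_n$ increasing very fast), and set $\Lambda=\{(t_n,s_n)\}$. The window is $g=\1_{[0,1]}+h$, where $h=\sum_k c_k\1_{J_k}$ is a sum of tall narrow spikes. The intervals $J_k$ sit far from $[0,1]$, for example $J_k\subset[2k,2k+1]$, so that $g\in L^p$. The spikes are arranged, using the smallness of $t_n$, so that the translated spikes $J_k+t_n$ are pairwise disjoint over all $(k,n)$. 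Finally, the heights and lengths are chosen so that, for each $n$, essentially only the spike labelled $k=n$ matters, with mass $\|c_n\1_{J_n}\|_p=w_n$, where $w_n\to 0$ and $\sum_n w_n^{2p/(2-p)}=\infty$. The simplest way to achieve this is to index the spikes by $n$ directly. I will also try to arrange that the translates $\1_{[t_n,1+t_n]}$ differ from $\1_{[0,1]}$ only on sets of negligible total effect.

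\emph{Norm computation.} For finitely supported $\{a_n\}$ I split $\R$ into $[0,1]$ (up to the small symmetric differences) and the disjoint spike sets. On $[0,1]$ the sum is essentially $\sum a_ne_{s_n}$. Lemma~\ref{lem:lacun}, applied with exponent $p$, shows this is $\asymp(\sum|a_n|^2)^{1/2}$ in $L^p[0,1]$; this holds for every choice of unimodular multipliers of the $a_n$. On each spike set $J_k+t_n$ only the single term $a_ne_{s_n}\tau_{t_n}h$ survives, contributing $|a_n|^p w_n^p$ or less. Adding these pieces should give, for all $\{a_n\}$,
\[
\Big\|\sum_n a_n e_{s_n}\tau_{t_n}g\Big\|_p^p\asymp \Big(\sum_n|a_n|^2\Big)^{p/2}+\sum_n w_n^p|a_n|^p ,
\]
uniformly over sign changes of the $a_n$. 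This proves that the system is an unconditional basic sequence. The error on the symmetric differences $[0,1]\,\triangle\,[t_n,1+t_n]$ has to be absorbed, either by taking $t_n$ extremely small or by using a perturbation (Paley--Wiener type) argument against the lacunary $\ell^2$ estimate.

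\emph{Non-equivalence, the main obstacle.} It remains to show that the norm $\max\{\|a\|_2,\|(w_na_n)\|_p\}$ is not equivalent to $\|(\|a|_{B_k}\|_2)_k\|_{\ell^p}$ for any partition $\{B_k\}$ of the index set. Two observations do most of the work. First, $\|e_n\|\asymp1$ for every unit vector $e_n$. Second, on any set $F$ of indices on which $w_n\le\eta$, the norm of a flat vector $\1_F$ is $\asymp|F|^{1/2}$ as long as $\eta^p|F|\le|F|^{p/2}$. The plan is then as follows. If some block $B_k$ is infinite, compare with test vectors supported where $w_n$ is not negligible, and reach a contradiction because our norm there sees the $\ell^p$ part. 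If the blocks are uniformly bounded, the norm reduces to $\ell^p$, which is contradicted by $\|\1_F\|\asymp|F|^{1/2}$ for large sets $F$ with small $w_n$. If the blocks are finite but unbounded, the divergence condition $\sum w_n^{2p/(2-p)}=\infty$ must be used, as in Rosenthal's analysis of the spaces $X_{p,w}$, to produce vectors whose two norms differ by an arbitrarily large factor. This case analysis, done cleanly and with constants independent of the partition, is the step I expect to be hardest. If it becomes unwieldy, I would instead quote the known fact that $X_{p,w}$ with these weights is not isomorphic to a complemented subspace of $\ell^p(\ell^2)$ spanned by unit vectors.
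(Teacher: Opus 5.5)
There is a genuine gap in the construction. Your requirement that the sets $J_k+t_n$ be pairwise disjoint over all $(k,n)$ cannot be met. Each $J_k$ has positive length and $t_n\to0$, so for fixed $k$ and all large $n\ne m$ we have $|t_n-t_m|<|J_k|$, and then $J_k+t_n$ and $J_k+t_m$ overlap. ``Indexing the spikes by $n$'' does not help, because $\tau_{t_n}g$ carries every spike, not only $J_n$. Nor is this an artifact of your choice of $t_n$. By Theorem~\ref{thm:uncseq_p<2} and the remark following it, the translation set of any such example cannot be a finite union of uniformly discrete sets. So it contains pairs of points at arbitrarily small distance, and translates of a fixed set of positive measure by such pairs overlap.

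Hence the assertion that on each spike set only one term survives is false, and your norm formula is unproved. Near each $J_k$ you must estimate $c_k\sum_n a_ne_{s_n}\1_{J_k+t_n}$, where infinitely many terms interact on intervals that are short compared with the periods of some of the $e_{s_n}$. Handling exactly this pile-up is the core of the paper's proof. There $g=\sum_k c_k2^{k/p}\1_{[k,k+2^{-k}]}$ and $\Lambda=\{(2^{-j},2^j)\}$. On $[k,k+1]$ the translates with $j<k$ are disjoint and contribute $|c_k|^p\sum_{j<k}|a_j|^p$. Those with $j\ge k$ pile up: on $[k+2^{-l-1},k+2^{-l}]$ one sees the tail $\sum_{j>l}a_je_{2^j}$. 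After rescaling this is a lacunary series on $[0,1]$ with integer frequencies, so Lemma~\ref{lem:lacun} yields $|c_k|^p(\sum_{j>k}|a_j|^2)^{p/2}$. The remaining partial sums $\sum_{j=k}^{l}$ are bounded crudely. The resulting weights are tail sums $\sum_{k\ge j}|c_k|^p$ (playing the role of your $w_j^p$), not single spike masses, and the $\ell^2$ term is produced by the pile-up itself. Your extra summand $\1_{[0,1]}$ is harmless and your treatment of it is fine, but it does not remove the need to analyse the overlaps.

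Your non-equivalence argument is also misorganized. The case ``some block $B_k$ is infinite'' gives no contradiction by itself. If $\sum_{n\in B_k}w_n^{2p/(2-p)}<\infty$, H\"older's inequality shows that on vectors supported in $B_k$ your norm is equivalent to $\ell^2$, just like the block norm. Conversely, the case of finite but unbounded blocks, which you expect to be hardest, is easy. The right split is by the number of blocks, as in the paper:
\begin{itemize}
\item If there are infinitely many blocks, pick one index from each of $n$ different blocks with $w_j\le\eta$ (possible since $w_j\to0$). For the corresponding $0$--$1$ vector the block norm is $n^{1/p}$, whereas your norm is $\lesssim n^{1/2}+\eta n^{1/p}$. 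This is a contradiction for $\eta$ small and $n$ large, and it uses only $w_j\to0$.
\item If there are finitely many blocks, the block norm is equivalent to $\|a\|_2$. Divergence of $\sum w_n^{2p/(2-p)}$ then gives vectors with $\|(w_na_n)\|_p/\|a\|_2$ unbounded. The paper instead uses the flat vectors $\1_{\{1,\dots,n\}}$ together with its growth condition on the weights.
\end{itemize}
No appeal to Rosenthal's spaces is needed.
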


\begin{proof}
For arbitrary complex scalars $\{c_k\}_{k=1}^\infty$ such that $\sum_{k=1}^\infty |c_k|^p =1$, define
\begin{equation}
g = \sum_{k=1}^\infty c_k  2^{k/p} \, \1_{[k, k+2^{-k}]}.
\end{equation}
Then $\|g\|_p^p = \sum_{k=1}^\infty |c_k|^p = 1$.
We consider the Gabor system $\G(g;\Lambda)$ with
\begin{equation}\label{eq:example:set}
\Lam = \{ (2^{-j}, 2^j) : j=1,2,3,\dots \}.
\end{equation}

If  $a=\{a_j\}_{j=1}^\infty$ is a sequence of scalars with only finitely many nonzero terms, denote
\begin{equation}
\Phi_a = \sum_{j=1}^\infty a_j e_{2^j} \ts_{2^{-j}} g.
\end{equation}
We will obtain an expression for the norm $\|\Phi_a|_{[k,k+1]}\|_p$ on each interval $[k,k+1]$, $k\in\Z$. Since $\Phi_a$ is supported on $[1,  \infty)$, it suffices to consider only $k \ge 1$.

We observe that

1. For $1 \le l \le k-1$, on $[k+2^{-l}, k+2^{-l}+2^{-k}]$ we have $\Phi_a = c_k 2^{k/p} a_l e_{2^l}$;

2. For $l \ge k$, on $[ k+2^{-k}+2^{-l-1}, k+2^{-k}+2^{-l}]$ we have $\Phi_a = c_k 2^{k/p} \sum_{j=k}^{l} a_j e_{2^j}$;

3. For $l \ge k$, on $[ k+2^{-l-1}, k+2^{-l}]$ we have $\Phi_a = c_k 2^{k/p} \sum_{j>l} a_j e_{2^j} $.

These intervals have disjoint interiors and
they cover the support of $\Phi_a$ inside the interval $[k, k+1]$. Hence, $\|\Phi_a|_{[k,k+1]}\|_p^p = (1)+(2)+(3)$,
where the last three terms denote respectively the contributions from intervals of the above three types. 

It is obvious that 
$(1) = |c_k|^p \sum_{j=1}^{k-1} |a_j|^p$.

By an application of \lemref{lem:lacun}, we also get that
$(3) \asymp |c_k|^p (\sum_{j>k} |a_j|^2)^{p/2}$.

Next, observe that $(2) \gtrsim |c_k a_k|^p$. In order to estimate $(2)$ from above, we use the pointwise estimate
$ | \sum_{j=k}^{l} a_j e_{2^j} |^p \le (l-k+1)^p \max_{k \le j \le l}  |a_j|^p $, which after integration gives us that
$(2) \lesssim |c_k|^p \sup_{j \ge k} |a_j|^p$.

Combining the above three estimates, we conclude that
\begin{equation}
\label{eq:phi:k}
\|\Phi_a|_{[k,k+1]}\|_p^p \asymp
|c_k|^p \Big\{
\sum_{j=1}^{k} |a_j|^p + 
(\sum_{j>k} |a_j|^2)^{p/2} \Big\}, \quad k=1,2,\dots,
\end{equation}
with the implied constants depending only on $p$. 

Now put $w_j = \sum_{k=j}^\infty |c_k|^p$, then it follows from the estimate \eqref{eq:phi:k} that
\begin{equation}
\label{eq:phi:p}
\|\Phi_a\|_p \asymp \Big( \sum_{j=1}^\infty |a_j|^p w_j \Big)^{1/p} + \Big(\sum_{j=1}^\infty |a_j|^2\Big)^{1/2},
\end{equation}
with the implied constants now depending on $p$ and the sequence $\{c_k\}_{k=1}^{\infty}$ only.

In particular, \eqref{eq:phi:p} implies (e.g.\ using \cite[Propositions 1.1.9 and 3.1.3]{AK16}) that the system  $\G(g;\Lambda)$ forms an unconditional basic sequence in $L^p(\R)$.

We now show that, for a suitable choice of the sequence $\{c_k\}_{k=1}^{\infty}$, the system $\G(g;\Lambda)$ is not equivalent to any subsequence of the standard unit vector basis of $\ell^p(\ell^2)$. We observe that $\{c_k\}_{k=1}^{\infty}$ may be chosen so that $\{w_j\}_{j=1}^{\infty}$ is an arbitrary non-increasing sequence tending to zero, $w_1 = 1$. In particular,
$w_j$ may tend to zero arbitrarily slowly. Hence, if $1 \le p < 2$, we may choose $\{c_k\}_{k=1}^{\infty}$ so that $\sum_{j=1}^{n} w_j \gg n^{p/2}$ as $n \to \infty$.

Assume now that the system $\G(g;\Lambda)$ is equivalent to some subsequence of the standard unit vector basis of $\ell^p(\ell^2)$. We will show that this leads to a contradiction.

The assumption means that the set of positive integers $\{1,2,3,\dots\}$ may be partitioned into finitely or countably many nonempty disjoint sets $\{J_s\}$ such that
\begin{equation}
\label{eq:phi:c}
\|\Phi_a\|_p \asymp \Big( \sum_{s} \Big(\sum_{j \in J_s} |a_j|^2\Big)^{p/2} \Big)^{1/p}.
\end{equation}

If there are only finitely many sets $J_s$, then the right hand side of \eqref{eq:phi:c} is equivalent to the norm $\|a\|_{\ell^2}$. But then, if we choose the sequence
$a=(1,1,\dots,1,0,0,\dots)$ such that the number of $1$'s is $n$, then $\|a\|_{\ell^2} = n^{1/2}$, while
the right hand side of \eqref{eq:phi:p}
becomes $ ( \sum_{j=1}^n w_j )^{1/p} + n^{1/2} \gg n^{1/2}$ due to our choice of $\{c_k\}_{k=1}^{\infty}$. 
This contradicts \eqref{eq:phi:c}, so it is not possible that there are only finitely many sets $J_s$.

Hence, there must be infinitely many sets $J_s$. Suppose now that $a = \{a_j\}_{j=1}^{\infty}$ is a sequence whose support contains at most one element from each set $J_s$. In this case, the right hand side of \eqref{eq:phi:c} is equal to the norm $\|a\|_{\ell^p}$. 
If $a_j = 0$ or $1$ for each $j$, then
we have $\|a\|_{\ell^p} = n^{1/p}$, where $n$ is the size of the support of $a$.
But if, in addition, the support of $a$ does not intersect $\{1, 2, \dots, N-1\}$ for some $N$, then (recalling that the sequence $\{w_j\}_{j=1}^{\infty}$ is non-increasing) the right hand side of \eqref{eq:phi:p} does not exceed $ (n \cdot w_{N} )^{1/p} +  n^{1/2}$. Since $w_N \to 0$ as $N \to + \infty$, given $\eps>0$ we can therefore choose $n$ and $N$ such that the right hand side of \eqref{eq:phi:p} does not exceed $\eps \|a\|_{\ell^p}$, and we thus arrive at the desired contradiction.

This shows that indeed, the system $\G(g;\Lambda)$ is not equivalent to any subsequence of the standard unit vector basis of $\ell^p(\ell^2)$, and concludes the proof of \thmref{thm:u.b.e.p<2}.
\end{proof}

% =======================================

\section{The case $1 < p < 2$: Unconditional Gabor frames}
\label{sec:framesp<2}

Next we prove \thmref{thm:gabframesp<2}. We start with the following remark: we proved recently \cite{LT25b} that in the space $L^p[0,1]$, $1 < p < \infty$, there exists an unconditional Schauder frame consisting of \emph{unimodular functions} $\{h_s\}$, $s \in \Z$,
i.e.\ such that $|h_s(x)|=1$ a.e.
 If we extend $h_s$ to the whole $\R$ as $1$-periodic functions, the result implies that the system
$\{ h_s \tau_t \1_{[0,1]} \}$, $(t,s) \in \Z \times \Z$, forms an unconditional Schauder frame in the space $L^p(\R)$. Hence, contrary to the previous section, \thmref{thm:gabframesp<2} becomes false if we replace the exponential functions $e_s$ with arbitrary unimodular functions $h_s$, i.e.\ the proof must involve more specific properties of the exponentials.

We note that the proof below works also in the more general case where $\TTT \sbt \R$ is a set of bounded density, i.e.\ a finite union of uniformly discrete sets.

\subsection{} 
First of all, observe that when we deal with unconditional Schauder frames, we can always add extra elements to the system with zeros as coefficient functionals. Hence, it would suffice to prove that a Gabor system $\G(g; \TTT\times \SSS)$ can never form an unconditional Schauder frame in the space $L^p(\R)$, $1< p < 2$, where $g\in L^p(\R)$,
$\TTT\subset\R$ is a  uniformly discrete set (or, more generally, a finite union of uniformly discrete sets), and $\SSS\subset\R$ is an arbitrary countable set.

Suppose to the contrary that every function $f\in L^p(\R)$ can be represented as an unconditionally convergent series
\begin{equation}\label{eq:4.1}
f = \sum_{t\in \TTT} \sum_{s\in \SSS} \gtss(f) e_s \ts_t g.
\end{equation}
The proof will consist of two steps: at first we obtain an estimate on the sequence of coefficients $\{\gtss(f)\}$, and then use this estimate to arrive at a contradiction.

\subsection{The estimate on the coefficients}
Without any loss of generality, we may assume that $\|g\|_p=1$. An application of \lemref{lem:unc_constant} together with cotype 2 inequality (\lemref{lem:cotype}) then yields the estimate
\begin{equation}
\Big(\sum_{t\in \TTT} \sum_{s\in \SSS} |g_{ts}^*(f)|^2\Big)^{1/2}\le K C_p  \|f\|_p.
\end{equation}
However, this estimate is not enough for us if the set $\SSS$ is not uniformly discrete, and therefore, similarly to the idea from \cite{LT25a}, we will prove a stronger estimate on the coefficients $\{g_{ts}^*(f)\}$.

To this end, we choose and fix a small number $\del > 0$ such that
\begin{equation}
\label{eq:gsup}
\sup_{0 < s < \del}
\int_{\R} |g(x)|^p \cdot |e^{2 \pi i s x} - 1|^p   dx < 1/2^p.
\end{equation}
After that, we divide the set $\SSS$ into blocks: for $l\in\Z$ we put
\begin{equation}
\label{eq:sl.block}
\SSS_l = \SSS\cap [l\delta, (l+1)\delta).
\end{equation}
An application of \lemref{lem:unc_constant} yields that for arbitrary signs $\theta_{tl} = \pm 1$ the following estimate holds for every $f\in L^p(\R)$,
\begin{equation}
\Big\| \sum_{t\in \TTT} \sum_{l\in\Z} \theta_{tl} \sum_{s\in \SSS_l} |g_{ts}^*(f)|e^{-2\pi i s t} e_s \ts_t g \Big\|_p\le K\|f\|_p.
\end{equation}
Putting now a Rademacher sequence $\eps_{tl}$ instead of $\theta_{tl}$, taking expectation and applying cotype 2 inequality (\lemref{lem:cotype}), we get that
\begin{equation}\label{eq:4.6}
\Big(\sum_{t\in \TTT} \sum_{l\in \Z} \Big\| \sum_{s\in \SSS_l} |g_{ts}^*(f)|e^{-2\pi i s t}e_s \ts_t g \Big\|_p^2\Big)^{1/2}\le KC_p\|f\|_p.
\end{equation}

Now, we claim that if $s_1, s_2\in \SSS_l$ then
\begin{equation}\label{eq:4.7}
\|e^{-2\pi i s_1 t} e_{s_1} \ts_t g - e^{-2\pi i s_2 t} e_{s_2} \ts_t g\|_p < 1/2.
\end{equation}
Indeed, we have
\begin{align}
&\|e^{-2\pi i s_1 t} e_{s_1} \ts_t g - e^{-2\pi i s_2 t} e_{s_2} \ts_t g\|^p_p=\int_\R |g(x-t)|^p\cdot |e^{2\pi i s_1 (x-t)} - e^{2\pi i s_2(x-t)}|^p dx\\
& \quad =\int_\R |g(x)|^p\cdot |e^{2\pi i s_1 x} - e^{2\pi i s_2 x}|^p dx = \int_\R |g(x)|^p\cdot |e^{2\pi i (s_1-s_2)x}-1|^p dx  < 1/2^p,
\end{align} 
where the last inequality follows from \eqref{eq:gsup}, \eqref{eq:sl.block}. Hence \eqref{eq:4.7} is proved.

Next, for each $l$ such that $\SSS_l$ is nonempty, we choose and fix some element $s_l\in \SSS_l$. Then for each $t \in T$, by the triangle inequality and the estimate \eqref{eq:4.7},
\begin{align}
& \Big\| \sum_{s\in \SSS_l} |g_{ts}^*(f)|e^{-2\pi i s t} e_s \ts_t g \Big\|_p  \ge \Big\| \Big( \sum_{s\in \SSS_l} |g_{ts}^*(f)| \Big) e^{-2\pi i s_l t} e_{s_l} \ts_t g \Big\|_p \\
& \qquad \qquad - \Big\| \sum_{s\in \SSS_l} |g_{ts}^*(f)|(e^{-2\pi i s_l t}e_{s_l} \ts_t g - e^{-2\pi i s t} e_s \ts_t g )\Big\|_p\\
& \qquad \ge \sum_{s\in \SSS_l} |g_{ts}^*(f)| - \sum_{s\in \SSS_l} |g_{ts}^*(f)| \cdot \|e^{-2\pi i s_l t}e_{s_l} \ts_t g - e^{-2\pi i s t} e_s \ts_t g\|_p \\
& \qquad \ge \frac{1}{2} \sum_{s\in \SSS_l} |g_{ts}^*(f)|.
\end{align}
It remains to plug this estimate into \eqref{eq:4.6} and arrive at the following key estimate on the coefficients $\{g_{ts}^*(f)\}$, which holds for an arbitrary $f\in L^p(\R)$,
\begin{equation}
\label{eq:est_on_coeff}
\Big(\sum_{t\in \TTT} \sum_{l\in \Z} \Big( \sum_{s\in \SSS_l} |g_{ts}^*(f)| \Big)^2 \Big)^{1/2}\le 2KC_p \|f\|_p.
\end{equation}

\subsection{The contradiction}
For each $t \in T$, we let
\begin{equation}
\label{eq:utf}
(u_{t} f)(x) = \sum_{s\in \SSS} g_{ts}^*(f) e_s(x), \quad x \in \R.
\end{equation}
According to \lemref{lem:bess}, this series is unconditionally convergent locally in $L^2$, and moreover, combining the estimates \eqref{eq:as.es} and \eqref{eq:est_on_coeff} we get that for any interval $I\subset\R$ of unit length, we have
\begin{equation}\label{eq:4.19}
\sum_{t\in \TTT} \|u_{t}f\|_{L^2(I)}^2 \lesssim \|f\|^2_p
\end{equation}
where the implied constant depends neither on $f$ nor on the interval $I$.

It follows from the unconditional convergence of the series expansion \eqref{eq:4.1} that for each $t \in \TTT$, the series
$ (v_t f)(x) = \sum_{s\in \SSS} \gtss(f) e_s (x) g(x-t)  $ converges unconditionally in the space $L^p(\R)$. By passing to subsequences and using pointwise a.e.\ convergence, we see that $(v_t f)(x) = g(x-t) (u_t f)(x) $ a.e. Hence, any $f\in L^p(\R)$ admits an expansion
\begin{equation}
\label{eq:4.20}
f(x) = \sum_{t\in \TTT} g(x-t) (u_{t}f)(x)
\end{equation}
which converges unconditionally in $L^p(\R)$.

Next, consider the function
\begin{equation}
G(x)=  \Big(\sum_{t\in \TTT}|g(x-t)|^p\Big)^{1/p}, \quad x \in \R.
\end{equation}
For any interval $I\subset\R$ of unit length, we have
\begin{equation}
\label{eq:G.LP}
\int_I G(x)^p dx = \sum_{t\in \TTT} \int_I|g(x-t)|^p dx = \int_{\R}|g(x)|^p  \sum_{t\in \TTT} \1_{I-t}(x)  dx.
\end{equation}
If $\TTT$ is a uniformly discrete set (or, more generally, a finite union of uniformly discrete sets), then the function  $\sum_{t\in \TTT} \1_{I-t}$ is uniformly bounded on $\R$. Hence, the integral on the right hand side of \eqref{eq:G.LP} converges, which means that $G\in L^p(I)$. 

We now use the functions $u_t f$, $t \in \TTT$, and $G$ in order to establish a pointwise estimate on the function $f$. It follows from the expansion \eqref{eq:4.20} (by passing to a subsequence and using pointwise a.e.\ convergence) that
\begin{align}
|f(x)| &\le \sum_{t\in \TTT} |(u_{t}f)(x)| \cdot |g(x-t)|  \le \Big(\sum_{t\in \TTT} |(u_{t} f)(x)|^{p'}\Big)^{1/p'}\Big( \sum_{t\in \TTT} |g(x-t)|^p \Big)^{1/p}
\label{eq:fug.1} \\
& \le \Big( \sum_{t\in \TTT} |(u_{t}f)(x)|^2 \Big)^{1/2}  G(x) \quad \text{a.e.,}
\label{eq:fug.2}
\end{align}
where $p'= p / (p-1) $ is the exponent conjugate to $p$. In the last inequality we used the assumption that $1<p<2$, and hence $p' > 2$.

Finally, consider an arbitrary unit interval $I$. Since $G\in L^p(I)$, there exists a positive constant $M$ and a measurable set $E \subset I$, $m(E)>0$, such that $G(x)\le M$ a.e.\ on $E$. Hence, if $f\in L^p(\R)$ and $f(x) =0$ a.e.\ on  $\R\setminus E$, then \eqref{eq:fug.1}--\eqref{eq:fug.2} imply that
\begin{equation}
|f(x)|^2\le M^2  \sum_{t\in \TTT} |(u_{t} f)(x)|^2  \quad \text{a.e.}
\end{equation}
By integrating this inequality over the interval $I$ and using \eqref{eq:4.19}, we conclude that the estimate $\|f\|_2\lesssim \|f\|_p$ holds for any $f\in L^p(\R)$ supported on the set $E$. Since $1 < p < 2$, this gives us the desired contradiction and completes the proof of \thmref{thm:gabframesp<2}.
\qed

% =======================================

\section{The case $p > 2$: Unconditional Gabor basic sequences}
\label{sec:basesp>2}

Now we turn to the proof of \thmref{thm:nobases_p>2}. Our first step is to formulate a statement about Gabor unconditional basic sequences similar to \thmref{thm:uncseq_p<2}, which would imply \thmref{thm:nobases_p>2}.

\subsection{}
Assume that we are given a Gabor system $\G(g; \Lambda)$ in the space $L^p(\R)$. We choose and fix a sufficiently small number $\delta = \delta(g,p) > 0$ satisfying 
\begin{equation}
\label{eq:5.d.1}
\sup_{0 < t < \delta}
\|\tau_t g - g\|_p \le \tfrac{1}{2} \|g\|_p.
\end{equation}
Next, we partition the set $\Lambda \sbt \R \times \R$
into vertical strips of width $\delta$, i.e.\ we define
\begin{equation}
\label{eq:5.1}
\Lambda_k = \{(t,s)\in \Lambda:  k\delta \le t < (k+1)\delta \}, \quad k \in \Z.
\end{equation}
We will  prove the following theorem.

\begin{thm}
\label{thm:uncseq_p>2}
Let $2 \le p < \infty$, and suppose that $\G(g; \Lambda)$ forms an unconditional basic sequence in $L^p(\R)$, where $g\in L^p(\R)$ and  $\Lambda \sbt \R \times \R$ is any countable set. Assume that $X = \cspan \, \G(g; \Lambda)$ is a complemented subspace of $L^p(\R)$. If  $\Lambda_k$ are defined by \eqref{eq:5.d.1}, \eqref{eq:5.1}, then the system $\G(g; \Lambda)$ is equivalent to the standard unit vector basis of $(\bigoplus_{k\in\Z} \ell^2_{\Lambda_k})_p$.
\end{thm}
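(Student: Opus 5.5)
The plan is to prove two things separately. First, inside each vertical strip $\Lambda_k$ the system behaves like the unit vector basis of $\ell^2_{\Lambda_k}$, with constants independent of $k$. Second, the blocks $X_k=\cspan\{e_s\ts_t g:(t,s)\in\Lambda_k\}$ combine in an $\ell^p$ fashion. The lower $\ell^p$ estimate comes directly from $p\ge 2$. The upper $\ell^p$ estimate is where complementation is needed: we dualize and apply \lemref{lem:JO.EXT} in $L^{p'}$, $p'\le 2$.

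\emph{Step 1 (square function and blocks).} As in the proof of \thmref{thm:uncseq_p<2}, $K$-unconditionality together with \lemref{lem:squarefunc} gives
\[
\Big\|\sum a_{ts}e_s\ts_t g\Big\|_p\asymp \Big\|\Big(\sum |a_{ts}|^2|\ts_t g|^2\Big)^{1/2}\Big\|_p .
\]
Fix $k$ and $t_0=k\delta$. For $(t,s)\in\Lambda_k$, by \eqref{eq:5.d.1} we have $\|\ts_t g-\ts_{t_0}g\|_p\le\frac12\|g\|_p$. The triangle inequality in $L^p(\ell^2)$ and \lemref{lem:mink} then give
\[
\tfrac12\|g\|_p\Big(\sum_{\Lambda_k}|a_{ts}|^2\Big)^{1/2}\le\Big\|\Big(\sum_{\Lambda_k}|a_{ts}|^2|\ts_tg|^2\Big)^{1/2}\Big\|_p\le\tfrac32\|g\|_p\Big(\sum_{\Lambda_k}|a_{ts}|^2\Big)^{1/2}.
\]
So each block is uniformly equivalent to $\ell^2_{\Lambda_k}$.

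\emph{Step 2 (lower $\ell^p$ estimate).} Let $F_k=\sum_{\Lambda_k}|a_{ts}|^2|\ts_tg|^2$. Since $p/2\ge 1$ and $F_k\ge0$, we have $\int(\sum_kF_k)^{p/2}\ge\sum_k\int F_k^{p/2}$. Combining this with Step 1 yields
\[
\Big\|\sum a_{ts}e_s\ts_tg\Big\|_p\gtrsim\Big(\sum_k\Big(\sum_{\Lambda_k}|a_{ts}|^2\Big)^{p/2}\Big)^{1/p}.
\]
No complementation is used here.

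\emph{Step 3 (upper estimate via duality — the main obstacle).} Write $x=\sum_k x_k$ with $x_k\in X_k$ and $\|x_k\|_p=\alpha_k$. We want $\|x\|_p\lesssim(\sum\alpha_k^p)^{1/p}$.

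Let $P$ be a bounded projection of $L^p$ onto $X$. Let $Q_k$ be the block projection $X\to X_k$, which is uniformly bounded by unconditionality, with $\|\sum_k\theta_kQ_k\|\le K$ for signs $\theta_k$. Choose norming functionals $h_k\in L^{p'}$ with $\|h_k\|=1$ and $\langle x_k,h_k\rangle=\alpha_k$. Set $y_k=(Q_kP)^*h_k\in L^{p'}$. These satisfy $\|y_k\|\le C$, $\langle x_j,y_k\rangle=\delta_{jk}\alpha_k$, and $\{y_k\}$ is (after normalization) an unconditional basic sequence in $L^{p'}$, since $\sum\theta_k(Q_kP)^*$ is bounded.

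Now fix a large interval $I$ with $\|g|_{\R\setminus J}\|_p<\eta$, where $J$ is $I$ shrunk by $\delta$ on each side. Put $E_k=k\delta+I$, so that $\sum_k\1_{E_k}\le M=M(|I|,\delta)$. By Step 1 and \lemref{lem:mink}, $\|x_k|_{\R\setminus E_k}\|_p\lesssim\eta\,\alpha_k$. From $\alpha_k=\langle x_k,y_k\rangle$ we then get
\[
\alpha_k\le\|y_k|_{E_k}\|_{p'}\,\alpha_k+C\eta\,\alpha_k ,
\]
so $\|y_k|_{E_k}\|_{p'}\ge 1/2$ once $\eta$ is small.

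\lemref{lem:JO.EXT} in $L^{p'}$ now applies and shows that $\{y_k\}$ is equivalent to the $\ell^{p'}$ basis, with constants independent of the particular $x_k$. Pairing $x$ against $\sum_k\beta_ky_k$ with $\sum|\beta_k|^{p'}\le1$, and using that $X^*$ is normed by such combinations thanks to the unconditional block decomposition and $P$, gives $\|x\|_p\lesssim(\sum\alpha_k^p)^{1/p}$. Making this last duality step rigorous is the delicate point. The concrete route is to take $\beta$ and the norming functional of $x$ and split that functional through the block projections $(Q_kP)^*$.

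Combining Steps 1–3 gives
\[
\Big\|\sum a_{ts}e_s\ts_tg\Big\|_p\asymp\Big(\sum_k\Big(\sum_{\Lambda_k}|a_{ts}|^2\Big)^{p/2}\Big)^{1/p},
\]
which is the asserted equivalence with the standard unit vector basis of $(\bigoplus_{k}\ell^2_{\Lambda_k})_p$.
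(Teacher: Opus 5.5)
Your Steps 1 and 2 are correct. Step 2 is a nice shortcut: it gets the lower $\ell^p(\ell^2)$ bound directly from the square function and the superadditivity of $u\mapsto u^{p/2}$, with no complementation, whereas the paper obtains both bounds from the dual equivalence \eqref{eq:toprove_bas_dual}. The gap is in Step 3, in two places.

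\emph{First gap: the concentration claim.} The bound $\|x_k|_{\R\setminus E_k}\|_p\lesssim\eta\,\alpha_k$ does not follow from Step 1 and \lemref{lem:mink}. Those two results only make the \emph{restricted square function} $\|(\sum_{\Lambda_k}|a_{ts}|^2|\tau_t g|^2)^{1/2}\|_{L^p(\R\setminus E_k)}$ small. The comparison between a sum and its square function comes from unconditionality on all of $\R$, and the restricted system $\{(e_s\tau_t g)\1_{\R\setminus E_k}\}$ need not be unconditional. For particular coefficients the sum can cancel on $E_k$ and leave a fixed fraction of its mass outside, however large $I$ is. For example, the flat lacunary sum $N^{-1/2}\sum_{n\le N}e_{2^n}$ has coefficient norm $1$ but modulus $\sqrt N$ near every integer, so thin tail bumps of $g$ placed near the integers can capture such mass while $\{e_{2^n}g\}$ stays unconditional. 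What is true is only the averaged bound $\E\|\sum_{\Lambda_k}\eps_{ts}a_{ts}e_s\tau_t g\|_{L^p(\R\setminus E_k)}\le C_p\eta\|a\|_{\ell^2}$ from \lemref{lem:type}. So concentration holds only after a choice of signs that depends on the coefficients.

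\emph{Second gap: the closing duality step runs the wrong way.} Even granting concentration, pairing $x$ with $\sum_k\beta_ky_k$ and using $\|\sum_k\beta_ky_k\|_{p'}\lesssim\|\beta\|_{\ell^{p'}}$ yields $\|x\|_p\gtrsim\sum_k\beta_k\alpha_k$. That is again the lower bound you already have. The upper bound needs the nontrivial lower $\ell^{p'}$ estimate for the blocks $z_k=(Q_kP)^*h$ of an \emph{arbitrary} functional $h$, for instance one norming $x$:
$(\sum_k\|z_k|_{X_k}\|^{p'})^{1/p'}\lesssim\|h\|_{p'}$.
Your assertion that $X^*$ is normed by combinations of the $y_k$ amounts to this, and it is not proved. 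The $y_k$ were built from norming functionals of the given $x_k$ and say nothing about the $z_k$. To get concentration of $z_k$ you must test it against the element of $X_k$ that norms it. By the $\ell^2$ block structure that element has coefficients $\bar b_{ts}$ with $b_{ts}=\langle e_s\tau_t g,z_k\rangle$. You then have to randomize signs on both sides simultaneously, so that the pairing stays $\sum|b_{ts}|^2$.

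This is exactly what Steps 3--5 of the paper do:
\begin{itemize}
\item they work with the biorthogonal system $g^*_{ts}=P^*\varphi^*_{ts}$, which is $(LK)$-unconditional in $L^{p'}$;
\item they pair $h_{k,\theta}=\sum_{\Lambda_k}\theta_{ts}\bar b_{ts}e_s\tau_t g$ with $h^*_{k,\theta}=\sum_{\Lambda_k}\theta_{ts}b_{ts}g^*_{ts}$;
\item they choose $\theta$ by type 2 so that $h_{k,\theta}$ concentrates on $E_k$, which forces $h^*_{k,\theta}$ to concentrate there as well;
\item they apply \lemref{lem:JO.EXT} to the normalized $h^*_{k,\theta}$ and remove the signs by unconditionality.
\end{itemize}
This gives \eqref{eq:toprove_bas_dual} for all coefficients, and the upper bound then follows by duality.
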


The equivalence stated in this theorem should be understood similarly to \thmref{thm:uncseq_p<2}: the elements of the standard unit vector basis of the space $(\bigoplus_{k\in\Z} \ell^2_{\Lambda_k})_p$ are naturally indexed by the points of $\Lambda$; then the conclusion of the theorem means that the  linear operator which maps $e_s \ts_t g$, $(t,s)\in \Lambda$, to the corresponding element of the unit vector basis of the space $(\bigoplus_{k\in\Z} \ell^2_{\Lambda_k})_p$ is an isomorphism between the subspace $X = \cspan \,  \G(g;\Lambda)$ and $(\bigoplus_{k\in\Z} \ell^2_{\Lambda_k})_p$. In particular, for $p > 2$ we obtain that $X \neq L^p(\R)$ by \lemref{lem:noniso}, and as a consequence, \thmref{thm:nobases_p>2} follows.

There is an important difference, however, between Theorems \ref{thm:uncseq_p<2} and \ref{thm:uncseq_p>2}: note that in \thmref{thm:uncseq_p>2} there is an extra requirement that $X = \cspan \, \G(g; \Lambda)$ be a complemented subspace of $L^p(\R)$. This requirement, in fact, is crucial: we will present an example showing that without this assumption the result fails
(see \thmref{thm:n.c.s} below).

On the other hand, note that in \thmref{thm:uncseq_p>2} (contrary to 
\thmref{thm:uncseq_p<2}) we do not impose any
a priori assumption on the countable set
$\Lam \sbt \R \times \R$.

One of the difficulties in the proof of \thmref{thm:uncseq_p>2}
compared to \thmref{thm:uncseq_p<2},  is that \lemref{lem:JO.EXT} is not true for $p > 2$
(see \cite[Example 2.16]{OSSZ11}). We overcome this obstacle by using duality, similar to the proof of \cite[Theorem 2.1]{FOSZ14}. However, in our case certain additional ideas are required.

The proof of \thmref{thm:uncseq_p>2} remains valid if we replace the complex exponentials $e_s$ with arbitrary unimodular functions $h_s$, or, more generally, with arbitrary functions $h_s$ such that $A \le |h_s(x)| \le B$ a.e. for some positive constants $A,B$.

\subsection{}
We now turn to prove \thmref{thm:uncseq_p>2}. Suppose that $\G(g; \Lambda)$ is a $K$-unconditional basic sequence in $L^p(\R)$, $p \ge 2$. Without loss of generality, we can assume that $\|g\|_p = 1$. Let the sets $\Lambda_k$ be defined by \eqref{eq:5.d.1}, \eqref{eq:5.1}. 

In order to prove our theorem, we need to show
that for any sequence of scalars $\{a_{ts}\}$, $(t,s)\in \Lambda$, 
with only finitely many nonzero terms, we have
\begin{equation}
\label{eq:toprove_bas}
\Big\| \sum_{k\in\Z} \sum_{(t,s)\in\Lambda_k} a_{ts} e_s \ts_t g \Big\|_p \asymp \Big(\sum_{k\in\Z}\Big( \sum_{(t,s)\in\Lambda_k}|a_{ts}|^2 \Big)^{p/2} \Big)^{1/p}.
\end{equation}
Of course, the implied constants should not depend on the coefficients $\{a_{ts}\}$.

The proof of the estimate \eqref{eq:toprove_bas} will be done in several steps. 

\subsubsection{Step 1: Reformulation via duality}
Let $X= \cspan\,  \G(g;\Lambda)$, so that $X$ is a closed subspace of
$L^p(\R)$, and the system
$\G(g; \Lambda)$ forms a $K$-unconditional basis of $X$.
Let  $\{\pphi_{ts}^*\} \subset X^*$ be the 
system of functionals biorthogonal to $\{e_s\tau_t g\}$, $(t,s)\in\Lambda$. 
We claim that  \eqref{eq:toprove_bas} would follow if we prove that for any sequence of scalars 
$\{b_{ts}\}$, $(t,s)\in \Lambda$, with only finitely many nonzero terms, we have
\begin{equation}
\label{eq:toprove_bas_dual}
\Big\| \sum_{k\in\Z} \sum_{(t,s)\in\Lambda_k} b_{ts} \pphi_{ts}^* \Big\|_{X^*} \asymp \Big(\sum_{k\in\Z}\Big( \sum_{(t,s)\in\Lambda_k}|b_{ts}|^2 \Big)^{p'/2} \Big)^{1/p'},
\end{equation}
where $p' = p/(p-1)$. 

This claim incorporates the fact that the space $\ell^{p'}(\ell^2)$ is 
the dual of $\ell^p(\ell^2)$; however we will not use this
fact explicitly and rather provide a direct proof of the claim.
To prove the claim, we need the following observation.

\begin{lem}
\label{lem:dual.k}
The system $\{\pphi_{ts}^*\}$,  $(t,s)\in \Lambda$,  forms a
$K$-unconditional basis of $X^*$.
\end{lem}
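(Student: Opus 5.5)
The plan is to use reflexivity of $X$ to show that the biorthogonal functionals form an unconditional basis of $X^*$. Since $X$ is a closed subspace of $L^p(\R)$ with $1<p<\infty$, it is reflexive. A Banach space with an unconditional basis that contains no copy of $c_0$ (in particular any reflexive space) has a shrinking basis, so the biorthogonal functionals $\{\pphi_{ts}^*\}$ form a basis of $X^*$. Unconditionality with the same constant $K$ is then checked directly by duality. Throughout, fix an enumeration of $\Lambda$ so that we may speak of partial sums.

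\textbf{Step 1 (the $K$-bound).} Fix finitely supported scalars $\{b_{ts}\}$ and signs (or more generally scalars) $\theta_{ts}$ with $|\theta_{ts}|\le 1$. For any $x\in X$ with $\|x\|\le 1$, expand $x=\sum \pphi_{ts}^*(x)\, e_s\ts_t g$. Then
\begin{equation*}
\Big(\sum \theta_{ts} b_{ts}\pphi_{ts}^*\Big)(x) = \Big(\sum b_{ts}\pphi_{ts}^*\Big)\Big(\sum \theta_{ts}\pphi_{ts}^*(x)\, e_s\ts_t g\Big),
\end{equation*}
by biorthogonality, since only finitely many terms are involved. The vector in the second bracket has norm at most $K\|x\|$ by \lemref{lem:unc_constant} applied to the basis of $X$. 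Taking the supremum over such $x$ gives
\begin{equation*}
\Big\|\sum \theta_{ts} b_{ts}\pphi_{ts}^*\Big\|_{X^*}\le K\Big\|\sum b_{ts}\pphi_{ts}^*\Big\|_{X^*}.
\end{equation*}
So $\{\pphi_{ts}^*\}$ is a $K$-unconditional basic sequence in $X^*$.

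\textbf{Step 2 (spanning).} It remains to show that $\{\pphi_{ts}^*\}$ spans $X^*$ densely. Suppose not. By Hahn--Banach and reflexivity, there is a nonzero $x\in X=X^{**}$ annihilated by every $\pphi_{ts}^*$. Then the unique expansion of $x$ has all coefficients zero, so $x=0$, a contradiction. Hence $\cspan\{\pphi_{ts}^*\}=X^*$.

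\textbf{Step 3 (convergence of expansions).} We must also check that each $x^*\in X^*$ equals $\sum x^*(e_s\ts_t g)\,\pphi_{ts}^*$. A basic sequence with dense span is a basis, and its coordinate functionals are determined by biorthogonality. Since $\pphi_{ts}^*(e_{s'}\ts_{t'} g)=\delta$, these coordinates are $x^*\mapsto x^*(e_s\ts_t g)$, viewed as evaluation functionals in $X^{**}=X$. This completes the argument.

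The only genuinely nontrivial input is reflexivity, which is used in Step 2. This is the main point where the hypothesis $p<\infty$ (and $p>1$) matters; everything else is routine duality.
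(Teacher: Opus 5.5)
Your argument is correct and is essentially the paper's proof. The duality computation in Step~1 shows that $\{\pphi_{ts}^*\}$ is a $K$-unconditional basic sequence in $X^*$, and reflexivity of $X$ then gives completeness in $X^*$; your Hahn--Banach argument in Step~2 simply writes out the reflexivity fact that the paper cites from Albiac--Kalton. One correction to your opening remark, which you never actually use: by James's theorem an unconditional basis is shrinking if and only if the space contains no copy of $\ell^1$, not $c_0$ (the unit vector basis of $\ell^1$ is a counterexample to the version you stated). Since reflexive spaces contain neither $\ell^1$ nor $c_0$, the conclusion you drew from that remark is still right.
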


\begin{proof}
Since the system $\{e_s\tau_t g\}$
  forms a $K$-unconditional basis of $X$, 
the biorthogonal system $\{\pphi_{ts}^*\}$ forms a
$K$-unconditional basic sequence in $X^*$
(this can be proved by a standard duality argument). Moreover, 
 $X$ is a closed subspace of $L^p(\R)$, $p \ge 2$, and the space $L^p(\R)$ is reflexive, hence $X$ is a reflexive Banach space (see \cite[Theorem II.A.14]{Woj91}).
Finally, since $X$ is reflexive, the biorthogonal system
$\{\pphi_{ts}^*\}$ is complete in the space $X^*$ 
(see \cite[Proposition 3.2.6]{AK16}),
so the conclusion of the lemma follows.
\end{proof}

We now turn to prove the claim, that is, we show
that \eqref{eq:toprove_bas_dual} implies \eqref{eq:toprove_bas}.
Suppose that \eqref{eq:toprove_bas_dual} holds,
and let $\{a_{ts}\}$, $(t,s)\in \Lambda$, be a sequence of
scalars with only finitely many nonzero terms. 
In order to establish \eqref{eq:toprove_bas}, we set
\begin{equation}
h =   \sum_{k \in \Z} \sum_{(t,s)\in\Lambda_k}   a_{ts} e_s \ts_t g,
\end{equation}
and we need to estimate the norm $\|h\|_p$.

To estimate $\|h\|_p$ from above, we use the fact that
\begin{equation}
\label{eq:hks.n.d}
\|h \|_{p} = \sup \{ | \pphi(h) | : \pphi \in X^*, \|\pphi\|_{X^*} \le 1 \}.
\end{equation}
Since $ \{\pphi^*_{ts}\}$, $(t,s)\in\Lambda$, is a complete system
in the space $X^*$ (due to \lemref{lem:dual.k}),
it suffices to take the supremum in \eqref{eq:hks.n.d}
over finite linear combinations of the elements 
$ \{\pphi^*_{ts}\}$.  So, suppose that $\pphi \in X^*$,
 $\| \pphi \|_{X^*} \le 1$, is given by
$\pphi = \sum_{(t,s)\in\Lambda}  b_{ts} \pphi_{ts}^*$
where only finitely many coefficients $b_{ts}$ are nonzero.
By an application of the Cauchy-Schwarz inequality and H\"{o}lder's inequality, we get
\begin{align}
 |\pphi(h)| &=
  \Big| \sum_{k \in \Z} \sum_{(t,s)\in\Lambda_k}  a_{ts} b_{ts} \Big|
 \le \sum_{k \in \Z} \Big\{ \Big( \sum_{(t,s)\in\Lambda_k}  |a_{ts}|^2 \Big)^{1/2}
 \Big( \sum_{(t,s)\in\Lambda_k}  |b_{ts}|^2 \Big)^{1/2} \Big\} \label{eq:cs.h.1} \\
& \le \Big( \sum_{k \in \Z} \Big( \sum_{(t,s)\in\Lambda_k}  |a_{ts}|^2 \Big)^{p/2} \Big)^{1/p}
 \Big( \sum_{k \in \Z} \Big( \sum_{(t,s)\in\Lambda_k}  |b_{ts}|^2 \Big)^{p'/2} \Big)^{1/p'} \label{eq:cs.h.2} \\
& \lesssim \Big( \sum_{k \in \Z} \Big( \sum_{(t,s)\in\Lambda_k}  |a_{ts}|^2 \Big)^{p/2} \Big)^{1/p}, \label{eq:cs.h.3}
\end{align}
where, in order to pass from \eqref{eq:cs.h.2}  to \eqref{eq:cs.h.3},
we used the estimate \eqref{eq:toprove_bas_dual}  and the fact that  $\|\pphi\|_{X^*} \le 1$.
Hence, combining  \eqref{eq:hks.n.d}, \eqref{eq:cs.h.1}, \eqref{eq:cs.h.2}, \eqref{eq:cs.h.3}
we obtain the upper norm estimate in \eqref{eq:toprove_bas}.

Next, we estimate $\|h\|_p$ from below. 
We choose scalars $\{b_{ts}\}$, $(t,s)\in \Lambda$, such 
that\footnote{The choice of the scalars $\{b_{ts}\}$ 
uses the fact that $ (\ell^p)^* = \ell^{p'} $
and $ (\ell^2)^*   = \ell^{2} $, and is done in
the following way. First,  put 
$c_k = ( \sum_{(t,s) \in \Lam_k} |a_{ts}|^2 )^{1/2}$, and
 choose scalars $\{d_k\}$ satisfying $\sum_k |d_k|^{p'} = 1$ and 
$\sum_k c_k d_k  = ( \sum_k |c_k|^p )^{1/p}$.
Then,   for each $k$, 
 choose scalars $\{b_{ts}\}$, $(t,s) \in \Lam_k$, 
such that $  \sum_{(t,s) \in \Lam_k} |b_{ts}|^2  = |d_k|^2$
and $ \sum_{(t,s) \in \Lam_k} a_{ts} b_{ts} = c_k d_k$.
This choice satisfies both \eqref{eq:lpl2:1} and \eqref{eq:lpl2:2}.} 
\begin{equation}
\label{eq:lpl2:1}
 \sum_{k \in \Z} \Big( \sum_{(t,s)\in\Lambda_k}  |b_{ts}|^2 \Big)^{p'/2} = 1,
\end{equation}
and
\begin{equation}
\label{eq:lpl2:2}
 \sum_{k \in \Z} \sum_{(t,s)\in\Lambda_k}  a_{ts} b_{ts} =
 \Big( \sum_{k \in \Z} \Big( \sum_{(t,s)\in\Lambda_k}  |a_{ts}|^2 \Big)^{p/2} \Big)^{1/p}
\end{equation}
(this choice, in particular, makes the inequalities in
\eqref{eq:cs.h.1}, \eqref{eq:cs.h.2} become equalities),
and we observe that only finitely many of the scalars $b_{ts}$  are nonzero.
Hence, we may define an element $\pphi \in X^*$ by
 $\pphi = \sum_{(t,s)\in\Lambda}  b_{ts} \pphi_{ts}^*$, and we have
\begin{equation}
\label{eq:lpl2:3}
 \Big| \sum_{k \in \Z} \sum_{(t,s)\in\Lambda_k}  a_{ts} b_{ts} \Big|
 = |\pphi(h)|  \le \|h\|_p \|\pphi \|_{X^*}.
\end{equation}
Thus, \eqref{eq:toprove_bas_dual}, 
\eqref{eq:lpl2:1}, \eqref{eq:lpl2:2}, \eqref{eq:lpl2:3}
imply the lower estimate in \eqref{eq:toprove_bas}.

To sum up, we proved that \eqref{eq:toprove_bas_dual} implies \eqref{eq:toprove_bas}.
It thus remains to establish \eqref{eq:toprove_bas_dual}.

\subsubsection{Step 2: An estimate for one block in $L^p$}
Fix $k\in\Z$ such that the set $\Lambda_k$ is nonempty.
Assume that $\{  a_{ts} \}$, $(t,s)\in\Lambda_k$, 
are scalars such that only finitely many of them are nonzero,
and consider the function
\begin{equation}\label{eq:hk}
h_{k} =  \sum_{(t,s)\in\Lambda_k}  a_{ts} e_s \ts_t g.
\end{equation}
Our goal is to prove that the estimate
 \eqref{eq:toprove_bas} holds for the function
 $h_k$.
 
We start by estimating $\|h_k\|_p$ from above.
Recall that $\G(g; \Lambda)$ forms a $K$-unconditional basic sequence
 in the space $L^p(\R)$, and $\|g\|_p = 1$. Hence, 
if  we let $\{ \eps_{ts} \}$,  $(t,s)\in\Lambda_k$,  be a Rademacher sequence, then
by applying type 2 inequality (\lemref{lem:type}) we get
\begin{equation}
\|h_{k}\|_p \le K \cdot \mathbb{E} \Big\|\sum_{(t,s)\in\Lambda_k} \eps_{ts} a_{ts} e_s \ts_t g \Big\|_p\le K C_p \Big( \sum_{(t,s)\in\Lambda_k}|a_{ts}|^2 \Big)^{1/2}.
\end{equation}

Next, we turn to estimate $\|h_k\|_p$ from below. First, we observe that
\begin{align}
\|h_{k}\|_p&\ge K^{-1} \Big(\mathbb{E} \Big\|\sum_{(t,s)\in\Lambda_k} \eps_{ts}  a_{ts} e_s \ts_t g \Big\|^p_p\Big)^{1/p} \label{eq:5.10.a} \\
&\ge K^{-1} A_p \Big\| \Big( \sum_{(t,s)\in\Lambda_k}|a_{ts}|^2 |\ts_t g|^2 \Big)^{1/2} \Big\|_p,
\label{eq:5.11}
\end{align}
where the last inequality follows from \lemref{lem:squarefunc}.

Now, we introduce some notation. Let
$\TTT_k$ denote the projection of $\Lambda_k$ on the $x$-axis, 
namely, $\TTT_k = \{t :  (t,s)\in\Lambda_k\}$; we observe that 
$\TTT_k  \sbt [k\delta, (k+1)\delta)$
which follows from \eqref{eq:5.1}.
 Next, for each point $t \in \TTT_k $  we use
 $\SSS_t$ to denote the ``fiber'' of $\Lam_k$ 
above the point $t$, that is,
$\SSS_t=\{s\in\R: (t,s)\in\Lambda_k\}$.
Finally, we also denote 
\begin{equation}
\label{eq:ct.def}
c_t = \Big( \sum_{s\in \SSS_t} |a_{ts}|^2 \Big)^{1/2}, \quad t \in \TTT_k.
\end{equation}
The quantity \eqref{eq:5.11} can be written using this notation as
\begin{equation}
\label{eq:5.10.b}
K^{-1} A_p \Big\| \Big( 
\sum_{t\in \TTT_k}|c_{t}|^2  |\ts_t g|^2 \Big)^{1/2} \Big\|_p
 = K^{-1} A_p  \|(c_t \ts_t g)_t\|_{L^p(\ell^2)}.
\end{equation}

Next, we choose and fix an arbitrary point
 $t_k \in \TTT_k$. By the triangle inequality in the space
$L^p(\ell^2)$, and using again that $\|g\|_p = 1$, we get
\begin{align}
&\|(c_t \ts_t g)_t\|_{L^p(\ell^2)} \ge 	\|(c_t \ts_{t_k} g)_t\|_{L^p(\ell^2)} - \|(c_t (\ts_t g - \ts_{t_k} g))_t\|_{L^p(\ell^2)} \label{eq:lpl2.1} \\
& \qquad \ge \Big( \sum_{t\in \TTT_k} |c_t|^2\Big)^{1/2} - \Big( \sum_{t\in \TTT_k} \|c_t  (\ts_t g - \ts_{t_k} g)\|_p^2 \Big)^{1/2}\ge \frac{1}{2}\Big( \sum_{t\in \TTT_k} |c_t|^2\Big)^{1/2}, \label{eq:lpl2.2}
\end{align}
where we used \lemref{lem:mink} in order to pass
from \eqref{eq:lpl2.1} to \eqref{eq:lpl2.2}; while in the last inequality we used the property \eqref{eq:5.d.1} satisfied by our choice of $\delta$, which implies that for each $t \in \TTT_k$ we have $\|\ts_t g - \ts_{t_k} g \|_p  \le 1/2$.

Clearly, it follows from \eqref{eq:ct.def} that
\begin{equation}
\label{eq:5.10.c}
\sum_{t\in \TTT_k} |c_t|^2 = \sum_{(t,s)\in\Lambda_k}|a_{ts}|^2.
\end{equation}
Hence, if we combine \eqref{eq:5.10.a}, \eqref{eq:5.11}, \eqref{eq:5.10.b}, 
\eqref{eq:lpl2.1}, \eqref{eq:lpl2.2}, \eqref{eq:5.10.c}, then
we obtain a lower estimate for the norm $\|h_k\|_p$.

Summing up, in this step we have shown that for any $k\in\Z$ and for any function $h_k \in L^p(\R)$ given by \eqref{eq:hk}, we have the estimate
\begin{equation}
\label{eq:5.17}
(2K)^{-1} A_p \Big( \sum_{(t,s)\in\Lambda_k}|a_{ts}|^2 \Big)^{1/2} \le \|h_{k}\|_p\le K C_p \Big( \sum_{(t,s)\in\Lambda_k}|a_{ts}|^2 \Big)^{1/2}.
\end{equation}

\subsubsection{Step 3: An estimate for one block in $L^{p'}$}
Recall that $\{\pphi_{ts}^*\} \subset X^*$ is the
system of functionals biortho\-gonal to the system $\{e_s\tau_t g\}$, $(t,s)\in\Lambda$. 
The system $\{\pphi_{ts}^*\}$,  $(t,s)\in \Lambda$,  forms a
$K$-unconditional basis of $X^*$  according to \lemref{lem:dual.k}.
Also, we note that
\begin{equation}
\label{eq:pphits.k}
1 \le  \| \pphi_{ts}^* \|_{X^*} \le K, \quad (t,s)\in \Lambda.
\end{equation}
Indeed, since $\|g\|_p = 1$,  we have on one hand,
$1 = |\pphi_{ts}^* (e_s \ts_t g)| \le \|\pphi_{ts}^*\|_{X^*}$;
while on the other hand,
any element $h \in X$ admits a unique (unconditionally convergent) 
expansion as $h = \sum_{(t,s)\in\Lambda}  a_{ts} e_s \ts_t g$,
and hence
$|\pphi_{ts}^*(h)| = | a_{ts}| = \|  a_{ts} e_s \ts_t g \|_p \le K \|h\|_p$.

By assumption, $X= \cspan \,  \G(g;\Lambda)$
is a complemented subspace of $L^p(\R)$. This means that
there is a bounded projection  operator $P$ from
$L^p(\R) $  onto $ X$. Let $L = \|P\|$.
The dual space $(L^p(\R))^*$ 
can be identified with $L^{p'}(\R)$, $p' = p/(p-1)$,
in the usual way. Hence, the 
dual operator $P^* : X^* \to L^{p'}(\R)$
is an isomorphic embedding satisfying 
\begin{equation}
\label{eq:dual_proj}
	\|x^*\|_{X^*} \le\|P^*(x^*)\|_{p'} \le L \|x^*\|_{X^*}, \quad x^* \in X^*.
\end{equation}

We now define a system of functions $\{ g_{ts}^* \} \sbt L^{p'}(\R)$ given by
\begin{equation}
\label{eq:gts.s.d}
g_{ts}^*=P^*(\pphi_{ts}^*), \quad (t,s)\in\Lambda. 
\end{equation}
It follows from \eqref{eq:pphits.k}, \eqref{eq:dual_proj}, \eqref{eq:gts.s.d} that
\begin{equation}
\label{eq:gts.s.norm}
1 \le \|g_{ts}^*\|_{p'} \le L K.
\end{equation}

We also need the following observation.

\begin{lem}
\label{lem:dual.gts.s}
The system $\{ g_{ts}^* \}$, $(t,s) \in\Lambda$, forms an $(LK)$-unconditional 
basic sequence in the space $L^{p'}(\R)$.
\end{lem}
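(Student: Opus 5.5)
The idea is to push the unconditionality of $\{\pphi_{ts}^*\}$ in $X^*$ through the isomorphic embedding $P^*$. By \lemref{lem:dual.k}, $\{\pphi_{ts}^*\}$ is a $K$-unconditional basis of $X^*$. Since $P^*$ is linear and satisfies \eqref{eq:dual_proj}, unconditionality of the images follows directly.

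Let $\{b_{ts}\}$ be scalars with finitely many nonzero terms, and let $\theta_{ts}$ be scalars with $|\theta_{ts}|\le 1$. Linearity of $P^*$ gives $\sum b_{ts}\theta_{ts} g_{ts}^* = P^*\big(\sum b_{ts}\theta_{ts}\pphi_{ts}^*\big)$. Using the upper bound in \eqref{eq:dual_proj}, then the $K$-unconditionality of $\{\pphi_{ts}^*\}$, then the lower bound in \eqref{eq:dual_proj}, we obtain
\begin{equation*}
\Big\|\sum b_{ts}\theta_{ts} g_{ts}^*\Big\|_{p'} \le L\Big\|\sum b_{ts}\theta_{ts}\pphi_{ts}^*\Big\|_{X^*} \le LK\Big\|\sum b_{ts}\pphi_{ts}^*\Big\|_{X^*} \le LK\Big\|\sum b_{ts} g_{ts}^*\Big\|_{p'}.
\end{equation*}
This is exactly the $(LK)$-unconditionality estimate on finite sums.

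To conclude that $\{g_{ts}^*\}$ is a basic sequence, I would do the following. First, each $g_{ts}^*$ is nonzero by \eqref{eq:gts.s.norm}. Second, the displayed estimate with $\theta_{ts}\in\{0,1\}$ shows that the coordinate projections onto finite subsets are uniformly bounded by $LK$ on the linear span. By the standard criterion (Grunblum-type, cf.\ \cite[Propositions 1.1.9 and 3.1.3]{AK16}), this already gives an unconditional basic sequence with constant $LK$, so the transfer argument is complete.

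Alternatively, one can note that $P^*$ maps $X^*$ isomorphically onto the closed subspace $P^*(X^*)$. An isomorphism carries an unconditional basis onto an unconditional basis of the image, and its constant is multiplied by at most the product $L$ of the isomorphism norms.

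There is no real obstacle here. The only point requiring care is that $\{\pphi_{ts}^*\}$ is a genuine unconditional basis of all of $X^*$, and not merely a basic sequence; this is supplied by \lemref{lem:dual.k}, which uses reflexivity. The remaining bookkeeping is to track that the constants combine to exactly $L\cdot K$.
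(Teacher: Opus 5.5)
Your argument is correct and is essentially the paper's proof: the same chain $\|\sum\theta_{ts}b_{ts}g_{ts}^*\|_{p'}\le L\|\sum\theta_{ts}b_{ts}\pphi_{ts}^*\|_{X^*}\le LK\|\sum b_{ts}\pphi_{ts}^*\|_{X^*}\le LK\|\sum b_{ts}g_{ts}^*\|_{p'}$, using both sides of \eqref{eq:dual_proj} and the $K$-unconditionality of $\{\pphi_{ts}^*\}$. One minor correction: this step needs only that $\{\pphi_{ts}^*\}$ is a $K$-unconditional basic sequence, not that it is complete in $X^*$, so the reflexivity part of \lemref{lem:dual.k} is not the point requiring care here.
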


\begin{proof}
 Indeed, for any sequence of scalars $\{b_{ts}\}$, $(t,s)\in\Lambda$, 
 with only finitely many nonzero terms, and for arbitrary scalars
  $\theta_{ts}$ with $|\theta_{ts}|\le 1$, we have
\begin{align}
& \Big\| \sum_{(t,s)\in\Lambda} \theta_{ts} b_{ts} g^*_{ts} \Big\|_{p'} = \Big\| P^* \Big( \sum_{(t,s)\in\Lambda} \theta_{ts} b_{ts} \pphi^*_{ts} \Big)\Big\|_{p'}\le L \Big\|\sum_{(t,s)\in\Lambda} \theta_{ts} b_{ts} \pphi^*_{ts} \Big\|_{X^*}\\
&\quad \le L K \Big\| \sum_{(t,s)\in\Lambda}  b_{ts} \pphi^*_{ts} \Big\|_{X^*} \le L K \Big\|P^*\Big(\sum_{(t,s)\in\Lambda}  b_{ts} \pphi^*_{ts} \Big)\Big\|_{p'} = L K \Big\| \sum_{(t,s)\in\Lambda}  b_{ts} g^*_{ts} \Big\|_{p'},
\end{align}
where we used \eqref{eq:dual_proj} in the first and third inequalities.
This establishes our claim.
\end{proof}

Now, fix $k \in \Z$ such that the set
$\Lam_k$ is nonempty. Assume that  $\{b_{ts}\}$, $(t,s)\in\Lambda_k$, 
are scalars such that only finitely many of them are
nonzero, and consider the function
\begin{equation}
\label{eq:hks.def}
h_k^* = \sum_{(t,s)\in\Lambda_k} b_{ts}g^*_{ts}.
\end{equation}
Our next goal is to establish an estimate similar to
\eqref{eq:5.17} for the norm $\| h_k^* \|_{p'}$.

We first estimate $\|h_k^*\|_{p'}$ from below. 
Let $\{\eps_{ts}\}$ be a Rademacher sequence. We use
the fact that $1 < p' \le 2$, so that
cotype 2 inequality (\lemref{lem:cotype}) holds
in the space $L^{p'}(\R)$. Together with
\eqref{eq:gts.s.norm} and \lemref{lem:dual.gts.s}, this yields
\begin{equation}
\label{eq:belowest}
\|h_k^*\|_{p'} \ge (L K)^{-1} \,
\mathbb{E} \Big\|\sum_{(t,s)\in\Lambda_k} 
\eps_{ts}b_{ts}  g_{ts}^* \Big\|_{p'}\ge (L K C_{p'})^{-1}
\Big( \sum_{(t,s)\in\Lambda_k}|b_{ts}|^2 \Big)^{1/2},
\end{equation}
where $C_{p'}$ is the constant in cotype 2 inequality for the space $L^{p'}(\R)$, see \lemref{lem:cotype}.

Next, we estimate $\|h_k^*\|_{p'}$ from above. 
We observe that 
\begin{equation}
\label{eq:hk.psik.d}
h_k^* = P^*(	\psi_{k}^*), \quad \psi_{k}^* = \sum_{(t,s)\in\Lambda_k} b_{ts} \pphi^*_{ts},
\end{equation}
and hence, by \eqref{eq:dual_proj}, we have 
\begin{equation}
\label{eq:dualnorm}
\|h_k^*\|_{p'}\le L \|\psi_k^*\|_{X^*} = L  \sup \{ |\psi_k^*(h)| : 
h \in X, \|h\|_{p} \le 1\}.
\end{equation}
Since $ \{e_s\tau_t g\}$, $(t,s)\in\Lambda$, is a complete system
in the space $X$, it suffices to take the supremum in \eqref{eq:dualnorm} over finite linear combinations of the functions $e_s\tau_t g$. So, suppose that $h \in X$, $\| h \|_p \le 1$, is a function of the form
\begin{equation}
\label{eq:h.hj.d}
h = \sum_{j \in \Z} h_j, \quad h_j =  \sum_{(t,s)\in\Lambda_j } a_{ts} e_s\tau_t g,
\end{equation}
where only finitely many coefficients $a_{ts}$ are nonzero.
Using $K$-unconditionality of the system 
$\{e_s\tau_t g\}$, $(t,s)\in\Lambda$,
 and then the estimate \eqref{eq:5.17}, we conclude that
\begin{equation}
1 \ge \|h\|_X \ge K^{-1} \|h_k\|_{X}
\ge (2K^2)^{-1} A_p  \Big( \sum_{(t,s)\in\Lambda_k} |a_{ts}|^2 \Big)^{1/2},
\end{equation}
and hence
\begin{equation}
\label{eq:hk.test}
\Big(\sum_{(t,s)\in\Lambda_k} |a_{ts}|^2 \Big)^{1/2} \le  2K^2 A_p^{-1}.
\end{equation}
It remains to apply the Cauchy--Schwartz inequality, i.e.\ using
\eqref{eq:hk.psik.d}, \eqref{eq:h.hj.d} we get
\begin{align}
|\psi_k^*(h)|  &= \Big| \sum_{(t,s)\in\Lambda_k} a_{ts} b_{ts} \Big| 
\le \Big( \sum_{(t,s)\in\Lambda_k} |a_{ts}|^2 \Big)^{1/2} 
\Big( \sum_{(t,s)\in\Lambda_k} |b_{ts}|^2 \Big)^{1/2} \label{eq:psk.e.1} \\
& \le  2K^2 A_p^{-1} \Big( \sum_{(t,s)\in\Lambda_k} |b_{ts}|^2 \Big)^{1/2},  \label{eq:psk.e.2}
\end{align}
where in the last inequality we used \eqref{eq:hk.test}. Thus, 
by combining \eqref{eq:dualnorm}, \eqref{eq:psk.e.1}, \eqref{eq:psk.e.2},
we obtain an estimate from above for the norm $\|h_k^*\|_{p'}$.

To sum up, we have shown that
for any $k\in\Z$ and any function $h^*_k \in L^{p'}(\R)$ given by 
\eqref{eq:hks.def}, we have the estimate
\begin{equation}
\label{eq:keyest_dual}
(L K C_{p'})^{-1}  \Big( \sum_{(t,s)\in\Lambda_k}|b_{ts}|^2 \Big)^{1/2}
\le \|h_k^*\|_{p'}
\le  2 L K^2  A_p^{-1} \Big( \sum_{(t,s)\in\Lambda_k} |b_{ts}|^2 \Big)^{1/2}.
\end{equation}

\emph{Remark}. In fact, we will only need the estimate from above
in \eqref{eq:keyest_dual}. We proved also the estimate from below
in order to provide the reader with a more complete picture.

\subsubsection{Step 4: A concentration inequality}

The idea now is to apply \lemref{lem:JO.EXT} in the space
$L^{p'}(\R)$, using the fact that $1 < p' \le 2$. However,
a difficulty arises:  since each ``block'' $h_k^*$ is a linear
combination of several elements
of the system $\{ g^*_{ts}\}$, $(t,s) \in \Lambda_k$, 
we cannot establish a good lower estimate for
$\|h^*_k|_{E_k}\|_{p'}$ where $E_k \sbt \R$ are
measurable sets satisfying the condition
$\sum_k \1_{E_k} \le M$ a.e.
Indeed,  while we do know that the
system $\{ g^*_{ts}\}$, $(t,s) \in \Lambda_k$, 
forms an unconditional basic sequence in $L^{p'}(\R)$, 
we cannot rule out that 
cancellations may occur on a given subset $E_k \sbt \R$.

In order to overcome this obstacle, we will now modify the functions $h_k^*$. 
Suppose that we are given a sequence of scalars
$\{b_{ts}\}$, $(t,s)\in\Lambda$, with only finitely many 
nonzero terms. For any sequence $\theta = \{\theta_{ts}\}$
of signs, i.e.\ such that $\theta_{ts}=\pm 1$, put
\begin{equation}
\label{eq:hk.s.def}
h_{k,\theta}^*  = \sum_{(t,s)\in\Lambda_k} \theta_{ts}b_{ts}g^*_{ts},
\quad  k \in \Z.
\end{equation}
We observe that the estimate \eqref{eq:keyest_dual} 
is still valid for the function $h_{k,\theta}^*$. 

Our goal now is to show that \emph{there exists some choice of signs}  $\theta = \{\theta_{ts}\}$
(which depends on the scalars $\{b_{ts}\}$)
such that the functions $\{h_{k,\theta}^*\}$, $k \in \Z$,
after a suitable normalization, satisfy
the conditions of \lemref{lem:JO.EXT}. 

To this end,  we put
\begin{equation}
\label{eq:epschoice}
\eps = (4 L K^2 )^{-1} A_p,
\end{equation} 
and we choose a large interval $I\subset \R$ such that 
\begin{equation}
\label{eq:i.c}
\sup_{0 \le t < \del} \| \tau_t g \|_{L^p(\R \setminus I)} 
< \eta, \quad \eta = C^{-1}_p \eps.
\end{equation}	
Let $E_k = I+k\delta$, $k \in \Z$. It then follows from
\eqref{eq:5.1} and  \eqref{eq:i.c}   that
\begin{equation}
\label{eq:i.t.g}
\|\tau_t g\|_{L^p(\R\setminus E_k)} < \eta, \quad
(t,s)\in \Lambda_k.
\end{equation}	
We also observe that the intervals 
$E_k$ satisfy $\sum_k \1_{E_k}\le M$ a.e., for a 
suitable constant $M$  which depends only on $p$, $K$, $L$, $\del$,
 and the function $g$ (that is, $M$ does not
 depend on the scalars $\{b_{ts}\}$ or the signs $\{\theta_{ts}\}$).

Now, consider the functions
\begin{equation}
\label{eq:hk.t.bts}
h_{k,\theta} =  \sum_{(t,s)\in\Lambda_k}  \theta_{ts}\overline{b}_{ts} e_s \ts_t g, 
\quad  k \in \Z.
\end{equation}
We will show   that there exists a choice of signs $\{\theta_{ts}\}$ such that ``most part'' of the $L^{p}$ mass of the function $h_{k,\theta}$ is concentrated on the interval $E_k$. To this end, 
we put a Rademacher sequence $\eps_{ts}$ instead of $\theta_{ts}$, and take expectation. Using type 2 inequality once again (\lemref{lem:type}) and \eqref{eq:i.t.g}, we get 
\begin{align}
\mathbb{E} \Big\| \sum_{(t,s)\in\Lambda_k}  
\eps_{ts}\overline{b}_{ts} e_s \ts_t g \Big\|_{L^p(\R\setminus E_k)}
& \le C_p  \Big( \sum_{(t,s)\in\Lambda_k}
|b_{t,s}|^2 \|\tau_t g\|_{L^p(\R\setminus E_k)}^2 \Big)^{1/2}\\
& \le C_p \eta \Big( \sum_{(t,s)\in\Lambda_k}|b_{t,s}|^2 \Big)^{1/2}.
\end{align}
Hence, for each $k \in \Z$ we can fix signs 
$\{ \theta_{t,s} \}$, $(t,s)\in\Lambda_k$,
which depend on $\{b_{ts}\}$,
such that for this choice
of signs the function $h_{k,\theta}$
given by \eqref{eq:hk.t.bts} satisfies the inequality
\begin{equation}\label{eq:conc}
\|h_{k,\theta}|_{\R\setminus E_k}\|_p 
\le C_p \eta \Big( \sum_{(t,s)\in\Lambda_k}|b_{t,s}|^2 \Big)^{1/2} 
= \eps \Big( \sum_{(t,s)\in\Lambda_k}|b_{t,s}|^2 \Big)^{1/2}.
\end{equation}

Next, we show that for the same choice of   signs 
$\theta = \{ \theta_{ts} \}$,  a ``substantial part''   
of the $L^{p'}$ mass of the function 
$h_{k,\theta}^*$ given by \eqref{eq:hk.s.def}
 is concentrated on $E_k$.
Since $\{\pphi_{ts}^*\} \sbt X^*$ is the
system of functionals biortho\-gonal to the system 
$\{e_s\tau_t g\} \sbt X$, it follows from \eqref{eq:gts.s.d}
that the system $\{ g_{ts}^* \} \sbt L^{p'}(\R)$ is again biorthogonal to $\{e_s\tau_t g\}$. 
This implies that
\begin{equation}
\label{eq:biort}
\int_\R h_{k,\theta}\cdot h^*_{k,\theta} = \sum_{(t,s)\in\Lambda_k} |b_{ts}|^2.
\end{equation}
In turn, using \eqref{eq:keyest_dual}, \eqref{eq:conc}, \eqref{eq:biort} we conclude that 
\begin{align}
 \Big| \int_{E_k} h_{k,\theta}\cdot h^*_{k,\theta}\Big| 
& =  \Big|\int_{\R} h_{k,\theta}\cdot h^*_{k,\theta} -\int_{\R\setminus E_k} h_{k,\theta}\cdot h^*_{k,\theta}\Big| \\
& \ge \sum_{(t,s)\in\Lambda_k} |b_{ts}|^2 -\|h_{k,\theta}|_{\R\setminus E_k}\|_p\cdot \|h^*_{k,\theta}\|_{p'}\\
& \ge    ( 1- 2 \eps L K^2  A_p^{-1}  )
\sum_{(t,s)\in\Lambda_k} |b_{ts}|^2 
= (1/2) \sum_{(t,s)\in\Lambda_k} |b_{ts}|^2, \label{eq:hhs.eq}
\end{align}
where in  the last equality  we used our choice \eqref{eq:epschoice}
of the number $\eps$ . It remains to apply the latter estimate
together with  H\"{o}lder's inequality and \eqref{eq:5.17}, and we get
\begin{equation}
\label{eq:dualconc}
\|h^*_{k,\theta}|_{E_k}\|_{p'} 
 \ge \|h_{k,\theta}\|_p^{-1}\cdot \Big| \int_{E_k} h_{k,\theta}\cdot h^*_{k,\theta}\Big|
 \ge (2KC_p)^{-1} \Big(\sum_{(t,s)\in\Lambda_k} |b_{ts}|^2\Big)^{1/2}.
\end{equation}

The concentration inequality \eqref{eq:dualconc} now provides us with the possibility of applying
\lemref{lem:JO.EXT} in the space $L^{p'}(\R)$. This will be done in the next (and final) step.

\subsubsection{Step 5: The end of the proof}  
For each $k \in \Z$, we put 
\begin{equation}
\label{eq:fk.def}
f_{k,\theta} = \Big(\sum_{(t,s)\in\Lambda_k} |b_{ts}|^2\Big)^{-1/2} h_{k,\theta}^*.
\end{equation}
Our goal is to apply \lemref{lem:JO.EXT} to the system 
$\{ f_{k,\theta} \}$, $k \in \Z$,  in the space $L^{p'}(\R)$.

Since the system $\{ g_{ts}^* \}$, $(t,s) \in\Lambda$, forms an $(LK)$-unconditional 
basic sequence in $L^{p'}(\R)$  (due to  \lemref{lem:dual.gts.s}),
 and since according to \eqref{eq:hk.s.def}, \eqref{eq:fk.def},
 each function $f_{k,\theta}$ is a linear combination of functions 
$\{ g_{ts}^* \}$, $(t,s)\in\Lambda_k$, and the ``blocks''
$\Lam_k$ are disjoint,  then also the system 
$\{ f_{k,\theta} \}$, $k \in \Z$,  forms an $(LK)$-unconditional basic sequence
in $L^{p'}(\R)$.
  
The sequence of functions $\{f_{k,\theta}\}$, $k\in\Z$, is bounded in $L^{p'}(\R)$, since we have
\begin{equation}
\|f_{k,\theta}\|_{p'} \le 2LK^2 A_p^{-1}
\end{equation}
according to \eqref{eq:keyest_dual}.
Moreover,  it follows from the estimate \eqref{eq:dualconc} that 
\begin{equation}
\|f_{k,\theta}|_{E_k}\|_{p'}\ge (2KC_p)^{-1},
\end{equation}
that is, the part of the $L^{p'}$ mass of the function $f_{k,\theta}$ which
is concentrated on the interval $E_k$ is bounded from below by
a positive constant that depends on $p$ and $K$ only.

We also recall that the intervals 
$\{ E_k \}$, $k \in \Z$,  satisfy the condition $\sum_k \1_{E_k}\le M$ a.e., 
where $M$ is a constant  which depends only on $p$, $K$, $L$, $\del$, and the function $g$.

This  means that the sequence $\{ f_{k,\theta} \}$, together 
with the sets $\{E_k\}$, satisfies the conditions of 
\lemref{lem:JO.EXT} in the space $L^{p'}(\R)$, and we have
$1 < p' \le 2$. Hence, by the lemma, the  sequence 
 $\{ f_{k,\theta} \}$ is equivalent to the unit vector
 basis of  $\ell^{p'}$, and the implied constants
 in this equivalence depend 
neither on the scalars $\{b_{ts}\}$ nor on the 
sequence of signs $\theta = \{\theta_{ts}\}$.

In turn, this allows us to conclude that the system $\{g_{ts}^*\} \sbt L^{p'}(\R)$
 is equivalent to the standard unit vector basis of $(\bigoplus_{k\in\Z} \ell^2_{\Lambda_k})_{p'}$. Indeed, using the unconditionality of this system, we have
\begin{align}
&\Big\| \sum_{k\in\Z} \sum_{(t,s)\in\Lambda_k} b_{ts} g^*_{ts} \Big\|_{p'}
\asymp \Big\| \sum_{k\in\Z} \sum_{(t,s)\in\Lambda_k} \theta_{ts}b_{ts} g^*_{ts} \Big\|_{p'} 
= \Big\| \sum_{k\in\Z} h_{k,\theta}^* \Big\|_{p'} \label{eq:equiv.gts.1} \\
& \qquad = \Big\|\sum_{k\in\Z}  \Big(\sum_{(t,s)\in\Lambda_k}
 |b_{ts}|^2\Big)^{1/2}  f_{k, \theta} \Big\|_{p'} 
 \asymp \Big(\sum_{k\in\Z}\Big( \sum_{(t,s)\in\Lambda_k}
 |b_{ts}|^2 \Big)^{p'/2} \Big)^{1/p'}. \label{eq:equiv.gts.2}
\end{align}

Lastly, we observe that \eqref{eq:equiv.gts.1}, \eqref{eq:equiv.gts.2}
imply the equivalence \eqref{eq:toprove_bas_dual},
due to  the definition \eqref{eq:gts.s.d} of the system $\{ g_{ts}^* \}$
and using the inequalities  \eqref{eq:dual_proj}.
Hence, the equivalence \eqref{eq:toprove_bas_dual} is finally
established, and this completes the proof of \thmref{thm:uncseq_p>2}.
\qed

\subsection{Example}
We now present an example showing that \thmref{thm:uncseq_p>2} becomes false if we drop the assumption that $\G(g; \Lambda)$
spans a complemented subspace of $L^p(\R)$.

The construction is based on \cite[Example 2.16]{OSSZ11} which shows that there is a function $g \in L^p(\R)$, $p>2$, such that the system of translates $\{g(x-j)\}_{j=1}^{\infty}$ forms an unconditional basic sequence in  the space $L^p(\R)$, but the system is not equivalent to the standard unit vector basis of $\ell^p$. We put this example in a more general form, and show that it moreover satisfies the  stronger assertion in the following theorem.

\begin{thm}
\label{thm:n.c.s}
Given $p>2$, there is a function $g \in L^p(\R)$ such that the system of translates $\{g(x-j)\}_{j=1}^{\infty}$ forms an unconditional basic sequence in $L^p(\R)$, but the system is not equivalent to any sub\-sequence of the standard unit vector basis of $\ell^p(\ell^2)$.
\end{thm}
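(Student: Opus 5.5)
The plan is to imitate the construction in the proof of \thmref{thm:u.b.e.p<2}, with the roles of $\ell^p$ and $\ell^2$ now reversed since $p>2$. We take a window built from Rademacher-type pieces,
\begin{equation*}
g = \sum_{k=0}^\infty c_k \, \ts_k r_k, \qquad r_k(x) = \operatorname{sign} \sin(2^{k+1}\pi x)\,\1_{[0,1]}(x),
\end{equation*}
so that on $[0,1]$ the functions $r_k$ form a Rademacher sequence. The scalars $c_k$ satisfy $\sum_k |c_k|^p<\infty$, which gives $g \in L^p(\R)$. For finitely supported $a=\{a_j\}$ we set $\Phi_a=\sum_{j\ge1} a_j \ts_j g$. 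On the unit interval $[m,m+1]$ we then have
\begin{equation*}
\Phi_a = \sum_{j+k=m} a_j c_k \, \ts_m r_k .
\end{equation*}
By Khintchine's inequality \eqref{eq:Khintch}, this gives
\begin{equation*}
\|\Phi_a\|_p^p \asymp \sum_m \Big(\sum_{j} |a_j|^2 |c_{m-j}|^2\Big)^{p/2} =: N(a)^p .
\end{equation*}
The quantity $N(a)$ depends only on $|a_j|$ and is monotone in each $|a_j|$. As in the $p<2$ example (via \cite[Propositions 1.1.9 and 3.1.3]{AK16}), this shows that $\{g(x-j)\}_{j\ge1}$ is an unconditional basic sequence. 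Note that $N(a) = \| \, |a|^2 * |c|^2 \|_{\ell^{p/2}}^{1/2}$.

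Next we suppose, for contradiction, that the system is equivalent to a subsequence of the unit vector basis of $\ell^p(\ell^2)$. This means there is a partition $\{1,2,\dots\}=\bigcup_s J_s$ such that
\begin{equation*}
N(a)\asymp \Big(\sum_s \big(\textstyle\sum_{j\in J_s}|a_j|^2\big)^{p/2}\Big)^{1/p}.
\end{equation*}
There are two kinds of test vectors. If $a$ is the indicator of $n$ indices taken from a single $J_s$ and spread very far apart, the convolution pieces do not interact, so $N(a)^p \asymp n\sum_k|c_k|^p \asymp n$. The right-hand side, however, is $n^{1/2}\gg n^{1/p}$ since $p>2$. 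Hence no $J_s$ can contain arbitrarily long, arbitrarily sparse sets, and in particular no $J_s$ can be infinite. So there are infinitely many sets $J_s$, and each of them must be finite.

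The complementary test uses $n$ indices taken from $n$ distinct sets $J_s$ that are clustered together, so that the right-hand side equals $n^{1/p}$. If $|c_k|$ is non-increasing and the chosen indices lie in a window of length $D$, then for $m$ beyond the window every one of the $n$ terms in the inner sum is at least $|c_{m-j_1}|^2$. This gives
\begin{equation*}
N(a)^p \ge n^{p/2}\sum_{k\ge D}|c_k|^p,
\end{equation*}
and we want this to be $\gg n$. This leads to the \textbf{main obstacle}: we must choose the decay of $c_k$, and control how far apart we are forced to pick representatives of distinct sets $J_s$, in such a way that the tail $\sum_{k\ge D}|c_k|^p$ beats $n^{1-p/2}$.

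To handle this I would first exploit the finiteness of the $J_s$ obtained in the first test. Because each $J_s$ is finite, any window $[N,N+D]$ with $D$ large meets many distinct sets $J_s$. I would try to quantify this using the same first test applied inside a window: within a window of length $D$, a single $J_s$ cannot occupy too many well-separated points. Combined with a very slowly decaying tail, for instance $\sum_{k\ge D}|c_k|^p \approx 1/\log\log D$, this should produce the required contradiction. If this balancing fails, the fallback is to let $c_k$ be supported on a very sparse (lacunary) set of $k$ with prescribed weights $|c_k|^p=w_k-w_{k+1}$, chosen exactly as in \thmref{thm:u.b.e.p<2}. That choice makes the interaction pattern of $|a|^2*|c|^2$ explicit enough to compute $N(a)$ for the two families of test vectors directly.
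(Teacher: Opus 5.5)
Your window (Rademacher pieces $\ts_k r_k$ in place of the paper's lacunary exponentials $e_{2^k}\1_{[k,k+1]}$), the formula $\|\Phi_a\|_p^p\asymp\sum_m(\sum_j|a_j|^2|c_{m-j}|^2)^{p/2}$, and the unconditionality argument all match the paper. So does your sparse test. Making ``do not interact'' precise needs the usual truncation of $g$ into a compactly supported part plus an $L^p$-small tail, since $c$ has infinite support.

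\textbf{The gap.} You draw too weak a conclusion from the sparse test. You only deduce that each $J_s$ is finite. The clustered test is then left as an unresolved ``main obstacle'' with a heuristic balancing plan and a fallback, so no proof is given.

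\textbf{How to close it.} Quantified, the sparse test gives a uniform bound. Let $C$ be the equivalence constant, and fix $n$ with $n^{1/2-1/p}\ge 2C$. Let $M=M(g,n,p)$ be a separation such that every $M$-separated $n$-set of indices gives $\|\Phi_a\|_p<2n^{1/p}$. Then no $J_s$ contains $n$ pairwise $M$-separated integers. A greedy choice shows that any set of more than $(n-1)M$ integers contains such a subset, so $\sup_s|J_s|\le(n-1)M=:B$. Mere finiteness of each $J_s$ would not justify your claim that every window of a fixed length $D$ meets many distinct $J_s$; the uniform bound does.

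Once $\sup_s|J_s|\le B$, the right-hand side $(\sum_s(\sum_{j\in J_s}|a_j|^2)^{p/2})^{1/p}$ lies between $\|a\|_{\ell^p}$ and $B^{1/2-1/p}\|a\|_{\ell^p}$ for every $a$. You then need no representatives from distinct sets. Take $a=\1_{\{1,\dots,n\}}$, so that $\|a\|_{\ell^p}^p=n$, while
\[
N(a)^p\ge\sum_{l=1}^n\Big(\sum_{k=0}^{l-1}|c_k|^2\Big)^{p/2}.
\]
This is $\gg n$ as soon as $\sum_k|c_k|^2=+\infty$.

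\textbf{The condition on $c$.} Divergence of $\sum_k|c_k|^2$, which is compatible with $\sum_k|c_k|^p<\infty$ precisely because $p>2$, is the decisive requirement. Your proposal never identifies it. Your tentative choice $\sum_{k\ge D}|c_k|^p\approx1/\log\log D$ with $|c_k|$ non-increasing does satisfy it. With $D=nB$ your window test would then also close, since $n^{p/2}/\log\log(nB)\gg n$. Note, though, that $\sum|c_k|^2=\infty$ alone would not suffice for your window test: $|c_k|=k^{-1/2}$ only gives $\asymp n$. This is exactly the paper's route: it runs the clustered test under $\sup_s|J_s|<\infty$, and uses the sparse test to rule out sets $J_s$ of arbitrarily large cardinality.
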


\begin{proof}
For arbitrary complex scalars $\{c_k\}_{k=0}^\infty$ such that $\sum_{k=0}^\infty |c_k|^p =1$, define
\begin{equation}
g (x) = \sum_{k=0}^\infty c_k e_{2^k}(x) \1_{[k, k+1]}(x), \quad x \in \R.
\end{equation}
Then $\|g\|_p^p = \sum_{k=0}^\infty |c_k|^p = 1$.
We consider the system $\{g(x-j)\}_{j=1}^{\infty}$ in $L^p(\R)$.

If  $a=\{a_j\}_{j=1}^\infty$ is a sequence of scalars with only finitely many nonzero terms, denote
\begin{equation}
\Phi_a (x) = \sum_{j=1}^\infty a_j g(x-j).
\end{equation}
We observe that $\Phi_a$ is supported on $[1, \infty)$, and for $l \ge 1$, on the interval $[l,l+1]$ we have
$\Phi_a  = \sum_{k=0}^{l-1} a_{l-k} c_k e_{2^k}$. By an application of \lemref{lem:lacun}, we conclude that
\begin{equation}
\label{eq:phi:l}
\|\Phi_a\|_p^p \asymp
\sum_{l=1}^{\infty}
\Big( \sum_{k=0}^{l-1} |a_{l-k}|^2 |c_k|^2 \Big)^{p/2},
\end{equation}
with the implied constants depending only on $p$.

In particular, \eqref{eq:phi:l} implies (e.g.\ using \cite[Propositions 1.1.9 and 3.1.3]{AK16}) that the system of translates $\{g(x-j)\}_{j=1}^{\infty}$ forms an unconditional basic sequence in $L^p(\R)$.

We now show that, for a suitable choice of the sequence $\{c_k\}_{k=0}^{\infty}$, the system of translates $\{g(x-j)\}_{j=1}^{\infty}$ is not equivalent to any subsequence of the standard unit vector basis of $\ell^p(\ell^2)$. We choose the sequence $\{c_k\}_{k=0}^{\infty}$ so that
$\sum_{k=0}^\infty |c_k|^2 = +\infty$. It follows from this condition that
\begin{equation}
\label{eq:ck:l}
\sum_{l=1}^{n} \Big( \sum_{k=0}^{l-1}
|c_k|^2 \Big)^{p/2} \gg n, \quad
n \to + \infty.
\end{equation}

Assume now that the system  $\{g(x-j)\}_{j=1}^{\infty}$ is equivalent to some subsequence of the standard unit vector basis of $\ell^p(\ell^2)$. We will show that this leads to a contradiction. The assumption means that the set of positive integers $\{1,2,3,\dots\}$ may be partitioned into disjoint sets $\{J_s\}$ (where each $J_s$ is a finite or countable set) such that
\begin{equation}
\label{eq:phi:x}
\|\Phi_a\|_p \asymp \Big( \sum_{s} \Big(\sum_{j \in J_s} |a_j|^2\Big)^{p/2} \Big)^{1/p}.
\end{equation}

If we have $\sup_s |J_s| < + \infty$, 
i.e.\ there are boundedly many elements in each set $J_s$, then the right hand side of \eqref{eq:phi:x} is equivalent to the norm $\|a\|_{\ell^p}$. But in this case, if we choose the sequence $a=(1,1,\dots,1,0,0,\dots)$ such that the number of $1$'s is $n$, then we have $\|a\|^p_{\ell^p} = n$, while
the right hand side of \eqref{eq:phi:l}
becomes at least as large as the quantity on the left hand side of \eqref{eq:ck:l} which grows faster than $n$. This contradicts \eqref{eq:phi:x}, so it is not possible that $\sup_s |J_s| < + \infty$.

Hence, there must be sets $J_s$ of arbitrarily large cardinality. We now use the following observation:\footnote{Let $g \in L^p(\R)$, $\|g\|_p=1$, and let $n$ be a positive integer. We choose a small $\eps = \eps(n,p) > 0$ and decompose $g = \pphi + \psi$ where $\pphi$ has compact support, $\|\pphi\|_p \le 1$, and $\|\psi\|_p < \eps$. Assume now that a sequence of real numbers $\{t_j\}_{j=1}^{n} $ is $M$-separated, where $M$ is the length of an interval containing the support of $\pphi$. Then $\| \sum_{j=1}^{n} g(x-t_j) \|_p \le n^{1/p}+ \eps n < 2n^{1/p}$ for $\eps$  small enough.} 
given a positive integer $n$, one can find a positive integer $M=M(g,n,p)$ sufficiently large, such that if a sequence of real numbers $\{t_j\}_{j=1}^{n} $ is $M$-separated, i.e.\ satisfies $|t_i-t_j| \ge M$, $ i \neq j$, then we have
$\| \sum_{j=1}^{n} g(x-t_j) \|_p < 2n^{1/p}$. We then choose some $s_0 = s_0(n,M)$ such that $|J_{s_0}| \ge (n-1)M + 1$. This condition ensures that we can find a subset $J \sbt J_{s_0}$ which is $M$-separated and contains exactly $n$ elements. As a consequence, if we define a  sequence
$a = \{a_j\}_{j=1}^{\infty}$ by
$a_j = 1$ for $j \in J$, and $a_j = 0$ otherwise, then we obtain $\|\Phi_a\|_p < 2n^{1/p}$. But on the other hand, since the support $J$ of the sequence $a$ is entirely contained in $J_{s_0}$, the right hand side of \eqref{eq:phi:x} is equal to the norm $\|a\|_{\ell^2} = n^{1/2}$. This contradicts  \eqref{eq:phi:x} for $n$ sufficiently large.

This shows that indeed, the system 
$\{g(x-j)\}_{j=1}^{\infty}$ is not equivalent to any subsequence of the standard unit vector basis of $\ell^p(\ell^2)$, and concludes the proof of \thmref{thm:n.c.s}.
\end{proof}

% =======================================

\section{The case $p > 2$: Unconditional Gabor frames}
\label{sec:framesp>2}

In this section we prove \thmref{thm:framesp>2}. There are two parts to this theorem. 

First, we construct an unconditional Gabor frame in  $L^p(\R)$, $p>2$,
for an arbitrary time--frequency shift set $\Lambda$ that is
not contained in any vertical strip $[-a, a]\times\R$.  The proof is 
done by a minor modification of the construction 
in \cite{FOSZ14} of unconditional Schauder frames of translates.

The bulk of the section is devoted to the second part of 
\thmref{thm:framesp>2}, namely, we prove that 
a Gabor system with time--frequency shift set $\Lambda$ 
contained in some vertical strip $[-a, a]\times\R$
cannot form an unconditional Schauder frame in $L^p(\R)$, $p \ge 2$.

\subsection{Proof of part (i) of \thmref{thm:framesp>2}}
Let $p>2$. Our goal is to show that if the set $\Lam \sbt \R \times \R$ is not
 contained in any vertical strip $[-a, a]\times\R$,
 then there is a function $g\in L^p(\R)$ and a 
sequence $\{ g^*_{ts} \}$ in $(L^p(\R))^*$,
such that the system $\{(e_s \tau_t g, g^*_{ts})\}$, $(t,s) \in \Lam$, 
forms an unconditional Schauder frame in the space $L^p(\R)$.

The construction is done by a modification of \cite[Theorem 3.2]{FOSZ14}.

First, observe that  by \lemref{lem:approxframe}
it would suffice to construct a function $g\in L^p(\R)$ and a 
sequence $\{ g^*_{ts} \}$ in $(L^p(\R))^*$,
such that the system $\{(e_s \tau_t g, g^*_{ts})\}$, $(t,s) \in \Lam$, 
forms an unconditional \emph{approximate} Schauder frame in $L^p(\R)$.

We consider the system of Haar functions $\{h_k\}_{k=1}^\infty$ 
on $\R$, normalized in $L^p(\R)$ (see \cite[Section 6.1]{AK16} 
for the definition and basic properties of the Haar functions on the 
segment $[0,1]$; in our case, we consider also the copies of this 
system on every interval $[n, n+1]$, $n\in\Z$, and we note that
the specific enumeration of the resulting system is not important
for our purpose). The system 
$\{h_k\}_{k=1}^\infty$ forms a $K_p$-unconditional basis in the 
space $L^p(\R)$ for a suitable constant $K_p$ 
(in fact, one can take $K_p = p-1$,
see \cite[Theorem 6.1.7]{AK16}). We denote the system of
biorthogonal functionals by $\{h_k^*\}_{k=1}^{\infty}$.

Next, we choose a sequence of positive integers 
$\{N_k\}_{k=1}^\infty$ satisfying the condition 
$\sum_{k=1}^\infty N_k^{1-p/2} < (2K_p)^{-p/2}$.
We then partition the set of positive integers 
$\{1,2,3,\dots\}$ into consecutive ``blocks'' 
$\{J_k\}_{k=1}^\infty$ such that 
$|J_k| = N_k$.

Since the set $\Lam$ is not
 contained in any vertical strip $[-a, a]\times\R$,
there exists a sequence of points $(t_i, s_i) \in \Lam$
 such that $|t_i| \to +\infty$. 
By passing to a subsequence, we may assume that the absolute values
$|t_i|$ are increasing and grow arbitrarily fast.  

Let $\mathcal{Q}$ be the set of all quadruples $(i, k, j, l)$ 
such that $i \in J_k$, $j \in J_l$,  $i \neq j$. 
For each quadruple $(i, k, j, l) \in \mathcal{Q}$, we define the set
\begin{equation}
\label{eq:6.1}
E(i,k,j,l) = \supp(h_k) + t_j - t_i.
\end{equation}
We make the following observation:
by choosing the sequence of absolute values
$|t_i|$ growing sufficiently fast, 
and using the fact that $\supp(h_k)$ is contained
in an interval of length at most $1$,
one can ensure that the sets
$\{E(i,k,j,l) \}$, $ (i,k,j,l) \in \mathcal{Q}$,
are pairwise disjoint.
This claim can be checked directly and the
details are left to the reader.

Now, we define our function $g$ as
\begin{equation}\label{eq:6.2}
g = \sum_{k=1}^\infty \sum_{i \in J_k} N_k^{-1/2}  \ts_{-t_i} (e_{-s_i} h_k).
\end{equation}
This differs from the construction in \cite[Theorem 3.2]{FOSZ14} only by the introduction of the exponential factors $e_{-s_i}$. By using the fact that
 the sets $ E(i,k,j,l) $ and $E(i',k',j,l)$ are disjoint 
whenever $(i,k,j,l)$ and  $(i',k',j,l)$ are  two quadruples in $\mathcal{Q}$
with $i \ne i'$, it follows that the summands in  \eqref{eq:6.2} have
pairwise disjoint supports. Hence
\begin{equation}
\|g\|^p_p = \sum_{k=1}^\infty N_k^{-p/2}\sum_{i \in J_k}\|\ts_{-t_i}  (e_{-s_i} h_k)\|_p^p 
= \sum_{k=1}^\infty N_k^{1-p/2} < \infty,
\end{equation}
that is, the function $g$ belongs to $L^p(\R)$.

Next, we define the coordinate functionals. 
For $j \in J_l$ we put $g^*_{t_j s_j} = N_l^{-1/2} h_l^*$,
while for all other points $(t,s)\in\Lambda$ we put $g^*_{ts} = 0$.
It can then be checked in the same way as in \cite[Theorem 3.2]{FOSZ14} that 
the system $\{(e_s \ts_t g, g^*_{ts})\}$, $(t,s)\in\Lambda$, forms
an unconditional approximate Schauder frame in the space $L^p(\R)$.
The main idea is to 
consider an arbitrary function $f \in L^p(\R)$, and split the series
\begin{equation}
\label{eq:6.m.s}
\sum_{j=1}^\infty g^*_{t_j s_j}(f) e_{s_j} \ts_{t_j} g 
= \sum_{k=1}^\infty \sum_{i \in J_k} 
 \sum_{l=1}^\infty \sum_{j \in J_l} 
 N_k^{-1/2}  N_l^{-1/2} h^*_l(f) e_{s_j} \ts_{t_j}  \ts_{-t_i} (e_{-s_i} h_k)
\end{equation}
into a main term and an error term as
\begin{equation}
\label{eq:6.m.e}
 \sum_{k=1}^\infty \sum_{i \in J_k} N_k^{-1} h^*_k(f) h_k
+ \sum_{(i,k,j,l)  \in \mathcal{Q}} N_k^{-1/2} N_l^{-1/2}   h^*_l(f) e_{s_j} \ts_{t_j}  \ts_{-t_i} (e_{-s_i} h_k).
\end{equation}
The first sum in \eqref{eq:6.m.e} converges unconditionally to $f$, since  the Haar system
$\{h_k\}_{k=1}^\infty$ forms an unconditional basis in the space $L^p(\R)$.
The summands in the second sum of
\eqref{eq:6.m.e} have pairwise disjoint supports, 
since the sets $\{E(i,k,j,l) \}$, $ (i,k,j,l) \in \mathcal{Q}$,
are pairwise disjoint. Hence,
the $L^p(\R)$ norm of the error term does not exceed
\begin{equation}
\label{eq:6.r.p}
\Big( \sum_{k=1}^\infty \sum_{i \in J_k} 
 \sum_{l=1}^\infty \sum_{j \in J_l} 
 N_k^{-p/2}  N_l^{-p/2} |h^*_l(f)|^p \Big)^{1/p} 
 \le  K_p  \|f\|_p \Big( \sum_{k=1}^\infty N_k^{1-p/2} \Big)^{2/p}.
\end{equation}
Since the constant
$ K_p \big( \sum_{k=1}^\infty N_k^{1-p/2} \big)^{2/p}$
is strictly smaller than $1$, this implies that
the series \eqref{eq:6.m.s} defines a bounded and 
invertible linear operator on $L^p(\R)$.
We refer the reader to \cite[Theorem 3.2]{FOSZ14} for the
complete details and rigorous justification of the claim
that the system $\{(e_s \ts_t g, g^*_{ts})\}$, $(t,s)\in\Lambda$, forms
an unconditional approximate Schauder frame in  $L^p(\R)$,
and we note that the exponential factors introduced in our
proof  do not affect any  of the estimates in \cite{FOSZ14}.
Our result is thus proved.
\qed

\subsection{Proof of part (ii) of \thmref{thm:framesp>2}}
We now turn to prove the main part of our theorem.
Let $p \ge 2$, and let $\Lam \sbt \R \times \R$ be a countable set which is
contained in some vertical strip $[-a, a]\times\R$.
Our goal is to show that there do not exist
any function $g\in L^p(\R)$ and 
coefficient functionals $\{ g^*_{ts} \}$ in $(L^p(\R))^*$,
such that the system $\{(e_s \tau_t g, g^*_{ts})\}$, $(t,s) \in \Lam$, 
forms an unconditional Schauder frame in  $L^p(\R)$.

By adding extra elements to the system with zeros as coefficient functionals,
we may assume that $\Lambda = \TTT \times \SSS$ 
where $\TTT, \SSS \sbt \R$ are finite or countable sets, and 
\begin{equation}
\label{eq:t.a}
\TTT\subset [-a,a].
\end{equation}
 Suppose to the contrary that 
$\{(e_s \ts_t g, g_{ts}^*)\}$, $(t,s) \in  \TTT \times \SSS$, forms
a $K$-unconditional Schauder frame in the space $L^p(\R)$. It means that any $f\in L^p(\R)$
can be expanded in an unconditionally convergent series 
\begin{equation}
\label{eq:expansion_p>2}
f = \sum_{t\in \TTT} \sum_{s\in \SSS} g_{ts}^*(f) e_s \ts_t g.
\end{equation}
We need to show that this assumption leads to a contradiction.

Our approach consists of two main steps. In the first step,
 we will prove an estimate on the coefficients $\{g_{ts}^*(f)\}$,
 $(t,s) \in \TTT \times \SSS$,
similar to the case $1 < p < 2$. However, in the case
 $p > 2$, the space $L^p(\R)$ does not satisfy cotype 2 inequality,
and we obtain the required estimate using
Lemmas \ref{lem:squarefunc} and \ref{lem:mink}.

In the second step, we will show that for any Schwartz function 
$\pphi$ whose Fourier transform $\ft{\pphi}$ has compact
support, the mapping $f\mapsto f\ast\pphi$ defines a 
compact operator   $L^p(\R) \to L^p(\R)$. 
This will lead  to the desired contradiction.

We note that in the case $p=2$,
our proof could be significantly simplified. 
By a development of the approach from \cite{LT25a} 
and an application of Plancherel's theorem, one can show
that if \eqref{eq:t.a} holds and the system
$\{(e_s \ts_t g, g_{ts}^*)\}$, $(t,s) \in  \TTT \times \SSS$, forms
an unconditional Schauder frame in the space $L^2(\R)$,
then for any bounded interval $I \sbt \R$,
the mapping $f \mapsto f \ast \ft{\1}_I$ 
defines a compact operator $L^2(\R) \to L^2(\R)$, 
which gives the desired contradiction.

Our proof in the general case can be viewed as a certain 
adaptation of these ideas to $L^p(\R)$ spaces, $p > 2$,
where Plancherel's theorem is no longer available.
In this case, our proof uses Rubio de Francia's Littlewood--Paley inequality
for arbitrary intervals, which was proved in  \cite{Rub85}.

\subsection{The estimate on the coefficients}
Without any loss of generality, we may assume that $\|g\|_p = 1$. 
We can then fix a big interval $I\subset \R$ such that
\begin{equation}
\label{eq:interval}
\inf_{|t| \le a + 1} \| \ts_t g\|_{L^p(I)} \ge 3/4.
\end{equation}

Next, we choose a small $0 <\delta < 1$ (depending on $g$ and $I$) and put 
\begin{equation}
\label{eq:tn.sk.1}
\TTT_n = \TTT\cap [n\delta, (n+1)\delta), \quad
\SSS_k = \SSS\cap [k\delta, (k+1)\delta), \quad  n,k \in \Z.
\end{equation}
We observe that due to \eqref{eq:t.a},
 only a finite number of sets $\TTT_n$ are nonempty.

\begin{lem}
\label{lem:d.es.tt}
If $\delta > 0$ is chosen sufficiently small (depending on $g$ and $I$) then 
\begin{equation}
\label{eq:6.6}
\|e_s \ts_t g - e_{k\delta} \ts_{n\delta} g\|_{L^p(I)} \le 1/4,
\quad (t,s) \in \TTT_n \times \SSS_k.
\end{equation}
\end{lem}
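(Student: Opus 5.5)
We split the difference by the triangle inequality into a frequency part and a time part:
\begin{equation*}
\|e_s \ts_t g - e_{k\delta} \ts_{n\delta} g\|_{L^p(I)} \le \|(e_s - e_{k\delta})\ts_t g\|_{L^p(I)} + \|\ts_t g - \ts_{n\delta} g\|_{L^p(I)} ,
\end{equation*}
where in the second term we used that $|e_{k\delta}| = 1$. We then show that each term is at most $1/8$ once $\delta$ is small. The choice of $\delta$ will depend only on $g$ and $I$, and in particular not on $n$, $k$, $t$, $s$. This uniformity is the only real point to check.

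\emph{Time part.} Since $t\in \TTT_n$ we have $0 \le t-n\delta < \delta$. Translation invariance of the norm gives
\begin{equation*}
\|\ts_t g - \ts_{n\delta} g\|_{L^p(I)} \le \|\ts_t g - \ts_{n\delta} g\|_{p} = \|\ts_{t-n\delta} g - g\|_p .
\end{equation*}
By continuity of translation in $L^p(\R)$ (here $p<\infty$), we can choose $\delta_1>0$ with $\sup_{0\le u<\delta_1}\|\ts_u g - g\|_p \le 1/8$. This bound is uniform in $n$ and $t$.

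\emph{Frequency part.} Since $s\in\SSS_k$ we have $0\le s-k\delta<\delta$. Pointwise, $|e_s(x)-e_{k\delta}(x)| = |e^{2\pi i (s-k\delta)x}-1| \le 2\pi \delta |x|$. Fix $R$ with $I\subset[-R,R]$. On $I$ this gives
\begin{equation*}
\|(e_s - e_{k\delta})\ts_t g\|_{L^p(I)} \le 2\pi\delta R\, \|\ts_t g\|_{L^p(I)} \le 2\pi \delta R \|g\|_p = 2\pi\delta R .
\end{equation*}
This holds uniformly in $t$, $s$ and $k$, because $I$ is bounded. The argument does not even need $t\in[-a,a]$, though that also holds here. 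So we choose $\delta_2 = (16\pi R)^{-1}$.

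Taking $\delta < \min(\delta_1,\delta_2,1)$ yields \eqref{eq:6.6}. There is no real obstacle. The only care needed is to bound the frequency term using boundedness of $I$, which is why $\delta$ depends on $I$. Avoiding $|x|$-growth on all of $\R$ is exactly why the estimate is stated in $L^p(I)$ rather than in $L^p(\R)$.
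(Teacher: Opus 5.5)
Your proof is correct and follows essentially the same argument as the paper. Both use the triangle-inequality split into a frequency term, controlled by $\|e_s-e_{k\delta}\|_{L^\infty(I)}\,\|\tau_t g\|_{L^p(I)}$ via boundedness of $I$, and a time term, controlled by continuity of translation in $L^p(\R)$. The only addition on your side is making the $L^\infty(I)$ bound explicit as $2\pi\delta R$.
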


\begin{proof}
The triangle inequality yields
\begin{equation}
\label{eq:d.es.tt.1}
\|e_s \ts_t g - e_{k\delta} \ts_{n\delta} g\|_{L^p(I)}
\le \|(e_s-e_{k\delta}) \ts_t g\|_{L^p(I)} 
+ \|e_{k\delta} (\ts_t g - \ts_{n\delta}g)\|_{L^p(I)}.
\end{equation}
The first summand on the right hand side of \eqref{eq:d.es.tt.1} does not exceed 
\begin{equation}
\|e_s-e_{k\delta}\|_{L^\infty(I)}\cdot \|\ts_t g\|_{L^p(I)}
\le 	\|e_s-e_{k\delta}\|_{L^\infty(I)} \le 1/8
\end{equation}
for $\delta $  small enough, 
because $|s-k\delta| < \delta$. The second summand equals 
\begin{equation}
\|\ts_t g - \ts_{n\delta}g\|_{L^p(I)}
\le \|\ts_t g - \ts_{n\delta}g\|_{L^p(\R)}  \le 1/8
\end{equation}
provided that
 $\delta $  is small enough, this time
because $|t-n\delta| < \delta$, and \eqref{eq:6.6} follows.
\end{proof}

\emph{Remark}. Of course, by taking a 
sufficiently small $\delta $ we could have achieved any 
positive constant on the right hand side of 
the inequality in \eqref{eq:6.6}.

We now fix the number $\delta $ provided by 
\lemref{lem:d.es.tt}, and assume that $0 < \delta < 1$.
In turn, this choice of $\del$ determines the sets
$\TTT_n$ and $\SSS_k$ defined by \eqref{eq:tn.sk.1}.

Suppose that 
 $n \in \Z$ is such that the set $\TTT_n$ is nonempty.
By our assumption, the system
$\{(e_s \ts_t g, g_{ts}^*)\}$, $(t,s) \in  \TTT \times \SSS$, forms
a $K$-unconditional Schauder frame in $L^p(\R)$.
Hence, for any $f\in L^p(\R)$, 
 and for arbitrary signs $\theta_k = \pm 1$, we have
\begin{equation}
\Big\| \sum_{k\in\Z} \theta_k   \sum_{t\in \TTT_n} \sum_{s\in \SSS_k}
 |g_{ts}^*(f)| e_s \ts_t g \Big\|^p_{p}\le K^p \|f\|^p_p.
\end{equation}
Putting a Rademacher sequence into the signs $\theta_k$, 
taking expectation and applying \lemref{lem:squarefunc}, we get
\begin{equation}
\Big( \int_\R \Big( \sum_{k\in\Z} \Big|  \sum_{t\in \TTT_n} \sum_{s\in \SSS_k}
 |g_{ts}^*(f)|  (e_s \ts_t g)(x)  \Big|^2 \Big)^{p/2} dx \Big)^{1/p}\le K A_p^{-1}  \|f\|_p.
\end{equation}
Now, we change the domain of integration from $\R$ to the interval $I$,
 and rewrite the estimate in a more compact form as
\begin{equation}
\label{eq:6.12}
\Big\| \Big(  \sum_{t\in \TTT_n} \sum_{s\in \SSS_k}
 |g_{ts}^*(f)|e_s \ts_t g \Big)_k \Big\|_{L^p(I;\ell^2)}\le  K A_p^{-1} \|f\|_p.
\end{equation}
By using the triangle inequality in the space $L^p(I;\ell^2)$, 
we can estimate the left hand side of 
\eqref{eq:6.12} from below as 
\begin{align}
&\Big\|   \Big(   \sum_{t\in \TTT_n} \sum_{s\in \SSS_k}
|g_{ts}^*(f)|e_s \ts_t g \Big)_k  \Big\|_{L^p(I;\ell^2)} 
 \ge \Big\| \Big(  \sum_{t\in \TTT_n}  \sum_{s\in \SSS_k}
  |g_{ts}^*(f)|e_{k\delta} \ts_{n\delta}g \Big)_k \Big\|_{L^p(I;\ell^2)} \label{eq:6.13} \\ 
 & \qquad  \qquad  - \Big\| \Big(   \sum_{t\in \TTT_n}  \sum_{s\in \SSS_k} |g_{ts}^*(f)|
   (e_{k\delta} \ts_{n\delta}g -e_s \ts_t g )\Big)_k \Big\|_{L^p(I;\ell^2)}. \label{eq:6.14}
\end{align}

Since the set $\TTT_n$ is nonempty, 
we have $|n\delta| \le a+1$, due to
\eqref{eq:t.a}, \eqref{eq:tn.sk.1}.
So  we can use \eqref{eq:interval} to
estimate the right hand side of \eqref{eq:6.13} from below as
\begin{align}
&\Big\| \Big( \sum_{t\in \TTT_n}  \sum_{s\in \SSS_k}
 |g_{ts}^*(f)|e_{k\delta} \ts_{n\delta}g \Big)_k \Big\|_{L^p(I;\ell^2)}\\
&\qquad = \Big\|\Big(\sum_{k\in\Z} \Big(
\sum_{t\in \TTT_n}  \sum_{s\in \SSS_k}
|g_{ts}^*(f)| \Big)^2 \cdot |e_{k\delta} \ts_{n\delta} g|^2  \Big)^{1/2}\Big\|_{L^p(I)}\\
&\qquad =\| \ts_{n\delta} g\|_{L^p(I)}\cdot\Big( \sum_{k\in\Z} \Big(
\sum_{t\in \TTT_n}  \sum_{s\in \SSS_k}
|g_{ts}^*(f)| \Big)^2 \Big)^{1/2}\\
&\qquad \ge \frac{3}{4}\Big( \sum_{k\in\Z} \Big( 
\sum_{t\in \TTT_n}  \sum_{s\in \SSS_k}
|g_{ts}^*(f)| \Big)^2 \Big)^{1/2}.
\end{align}

Next, we  estimate the quantity in \eqref{eq:6.14} from above. To this end,
 we apply \lemref{lem:mink} and the triangle inequality in the space $L^p(I)$, and infer that 
\begin{align}
&\Big\| \Big( \sum_{t\in \TTT_n}  \sum_{s\in \SSS_k} 
 |g_{ts}^*(f)| (e_{k\delta} \ts_{n\delta}g -e_s \ts_t g )\Big)_k \Big\|_{L^p(I;\ell^2)}\\
&\qquad \le \Big( \sum_{k\in\Z} \Big\| \sum_{t\in \TTT_n}  \sum_{s\in \SSS_k} 
 |g_{ts}^*(f)| \cdot (e_{k\delta} \ts_{n\delta} g - e_s \ts_t g) \Big\|^2_{L^p(I)} \Big)^{1/2}\\
& \qquad \le \Big( \sum_{k\in\Z} \Big( \sum_{t\in \TTT_n}  \sum_{s\in \SSS_k} 
 |g_{ts}^*(f)|\cdot \|e_{k\delta} \ts_{n\delta}g - e_s \ts_t g\|_{L^p(I)} \Big)^2 \Big)^{1/2}. \label{eq:es.p.5}
\end{align}
Since we have chosen $\delta$ so that the inequality \eqref{eq:6.6} holds, we conclude that 
the  quantity in \eqref{eq:es.p.5} does not exceed
\begin{equation}
\frac{1}{4}\Big( \sum_{k\in\Z} \Big(
\sum_{t\in \TTT_n}  \sum_{s\in \SSS_k} 
|g_{ts}^*(f)| \Big)^2 \Big)^{1/2}.
\end{equation}

Collecting the above estimates, and using \eqref{eq:6.12},
 \eqref{eq:6.13}--\eqref{eq:6.14}, we get
\begin{equation}
\label{eq:es.p.8}
\Big( \sum_{k\in\Z} \Big( 
\sum_{t\in \TTT_n}  \sum_{s\in \SSS_k} 
|g_{ts}^*(f)| \Big)^2 \Big)^{1/2}\le 2K A_p^{-1} \|f\|_p.
\end{equation}
This estimate is valid for every $n \in \Z$ such that the set
$\TTT_n$ is non-empty. Since there are only finitely many
such values of $n$, due to 
\eqref{eq:t.a}, \eqref{eq:tn.sk.1},
we infer from \eqref{eq:es.p.8}
using the triangle inequality in the space $\ell^2$,
that for some positive constant $M$ 
(not depending  on $f$) the inequality
\begin{equation}
\label{eq:keyest}
\Big( \sum_{k\in\Z} \Big( \sum_{t\in \TTT} \sum_{s\in \SSS_k}
|g_{ts}^*(f)| \Big)^2 \Big)^{1/2}\le M \|f\|_p
\end{equation}
holds for every $f \in L^p(\R)$. 
This is our key estimate on the coefficients $\{g_{ts}^*(f)\}$.

\subsection{Convolution operator}
Let $\pphi$ be a Schwartz function on $\R$, and
\begin{equation}
\ft{\pphi}(\xi) =  \int_{\R} \pphi(x) e^{-2 \pi i \xi x} dx,
\quad  \xi \in \R,
\end{equation}
be the Fourier transform of $\pphi$, which is again
a Schwartz function on $\R$. We assume that
$\ft{\pphi}$ has compact support, but that
$\pphi$ is otherwise arbitrary. 
The function $\pphi$  induces a convolution operator $U_\pphi$ 
defined by $U_\pphi f = f\ast\pphi$,
which we consider as a bounded operator $L^p(\R) \to L^p(\R)$. 
Assuming that \eqref{eq:t.a} holds and the system
$\{(e_s \ts_t g, g_{ts}^*)\}$, $(t,s) \in  \TTT \times \SSS$, forms
an unconditional Schauder frame in $L^p(\R)$,
we will show that for any weakly null sequence 
$\{ f_n \}$ in $L^p(\R)$, we have
$\|U_\pphi f_n\|_p \to 0$.
(Since the space $L^p(\R)$ is reflexive, this means that 
$U_\pphi$ is a compact operator, see
\cite[Chapter VI, Proposition 3.3]{Con90},
but we will not use this fact).
This will clearly lead us to a contradiction, since we can e.g.\ take $\pphi$ 
nonnegative and not identically zero, 
and choose $f_n = \1_{[n,n+1]}$, and in this case 
we get that $U_\pphi f_n = \ts_n (U_\pphi f_0)$, and so 
$\| U_\pphi f_n \|_p$  does not tend to zero.

We therefore suppose that $\{ f_n \}$ is an arbitrary
 weakly null sequence in $L^p(\R)$. 
An application of the uniform boundedness principle yields that
 $\{ f_n \}$ must be a bounded sequence  in $L^p(\R)$, so we may assume
with no loss of generality that $\|f_n\|_p \le 1$. 
Since the system 
$\{(e_s \ts_t g, g_{ts}^*)\}$, $(t,s) \in  \TTT \times \SSS$, forms
an unconditional Schauder frame in $L^p(\R)$,
each $f_n$ admits an unconditionally convergent series expansion
\begin{equation}
\label{eq:fn.ucs}
f_n =  \sum_{k\in\Z} \sum_{t\in \TTT} \sum_{s\in \SSS_k}  
 g_{ts}^* (f_n)  e_s \ts_t g,
\end{equation}
where the convergence is in the $L^p(\R)$ norm.

Now, fix an arbitrarily small  $\eps > 0$, and
let $N$ be a positive integer. We write
\begin{equation}
\label{eq:def.an.bn.1} 
U_\pphi f_n  = f_n\ast\pphi   = A^N_n + B^N_n,
\end{equation}
where $A^N_n, B^N_n$ are defined by
\begin{equation}
 \label{eq:def.an.bn.2}
A^N_n = \Big(  \sum_{|k| \le N} \sum_{t\in \TTT}  \sum_{s\in \SSS_k}
 g_{ts}^*(f_n)  e_s \ts_t g \Big) \ast \pphi  
\end{equation}
and
\begin{equation}
 \label{eq:def.an.bn.3}
B^N_n =   \Big(  \sum_{|k| > N} \sum_{t\in \TTT}  \sum_{s\in \SSS_k}
 g_{ts}^*(f_n)  e_s \ts_t g \Big) \ast \pphi.
\end{equation}
We will estimate $A^N_n$ and $B^N_n$ separately. On one hand, we will 
show that if $N$ is sufficiently large (independently of $n$)
then  $ \|B^N_n\|_p \le \eps$.
 On the other hand, we will also show that if $N$ is fixed, then
for all sufficiently large $n$ we have $\|A^N_n\|_p \le\eps$.
Once we prove these two claims we are 
done: we first fix $N$ large enough, and then conclude 
from \eqref{eq:def.an.bn.1}  that 
for all sufficiently large $n$ we have $\|U_\pphi f_n\|_p \le 2\eps$.
Since $\eps $ is an arbitrarily small number,
 this shows that $\|U_\pphi f_n\|_p \to 0$.

It remains to prove the two required estimates of $A^N_n$ and $B^N_n$.

\subsection{The estimate of $A^N_n$}
First, let us assume that $N$ is fixed. We then need to 
show that for all sufficiently large $n$ we have
$\|A^N_n\|_p \le\eps$ . 
 By applying the inequality 
$\|\pphi \ast \psi \|_p \le \|\pphi\|_1 \|\psi\|_p$
to \eqref{eq:def.an.bn.2}, and using
 the triangle inequality  in  $L^p(\R)$, we get
\begin{equation}
\label{eq:an.c.1}
\|A^N_n \|_p \le  \|\pphi\|_1 \sum_{|k| \le N} \Big\|  \sum_{t\in\TTT} \sum_{s\in \SSS_k}
 g_{ts}^*(f_n) e_s \ts_t g \Big\|_p.
\end{equation}
Hence, it would be enough to show that for each $k \in \{-N, \dots, N\}$, the inequality
\begin{equation}
\label{eq:6.78.1}
\Big\|  \sum_{t\in\TTT} \sum_{s\in \SSS_k}  
g_{ts}^*(f_n) e_s \ts_t g \Big\|_p \le\frac{\eps}{2N+1}
\end{equation}
holds for all sufficiently large $n$.

To this end, we fix a large interval $J\subset\R$ such that
\begin{equation}
\label{eq:6.30}
\sup_{|t| \le a} \|\ts_t g\|_{L^p(\R\setminus J)} \le
\frac{\eps}{2M   (2N+1)},
\end{equation}
where $M$ is the constant from the inequality
\eqref{eq:keyest}. We then have
\begin{align}
&\Big\| \sum_{t\in\TTT}  \sum_{s\in \SSS_k}
 g_{ts}^*(f_n) e_s \ts_t g \Big\|_p 
\le \Big\|  \sum_{t\in\TTT} \sum_{s\in \SSS_k}
 g_{ts}^*(f_n) e_s \ts_t g \Big\|_{L^p(J)} \label{eq:6.77.1} \\
& \qquad  \qquad + \Big\|  \sum_{t\in\TTT} \sum_{s\in \SSS_k}
 g_{ts}^*(f_n) e_s \ts_t g \Big\|_{L^p(\R\setminus J)}. \label{eq:6.77.2}
\end{align}
It follows from \eqref{eq:t.a}, \eqref{eq:6.30} that  for every $n$, the 
quantity in \eqref{eq:6.77.2} does not exceed
\begin{align}
&  \sum_{t\in\TTT} \sum_{s\in\SSS_k} 
\|g_{ts}^*(f_n) e_s \ts_t g\|_{L^p(\R\setminus J)} 
=  \sum_{t\in\TTT} \sum_{s\in\SSS_k}
  |g_{ts}^*(f_n)|\cdot \|\ts_t g\|_{L^p(\R\setminus J)} \\
& \qquad \qquad \le  \frac{\eps}{2M (2N+1)} 
\Big( \sum_{t\in\TTT} \sum_{s\in \SSS_k} |g_{ts}^*(f_n)|\ \Big),
\end{align}
and, due to \eqref{eq:keyest} and since $\|f_n\|_p \le 1$,
 this is not greater than  $\frac1{2} (2N+1)^{-1} \eps$.

Hence, in order to establish the estimate
\eqref{eq:6.78.1}, it remains to show that
\begin{equation}
\label{eq:6.34}
\Big\| \sum_{t\in\TTT} \sum_{s\in \SSS_k} 
g_{ts}^*(f_n) e_s \ts_t g \Big\|_{L^p(J)}\le  \frac{\eps}{2(2N+1)}
\end{equation}
for all sufficiently large $n$. To show this, we will
use the assumption that $\{ f_n \}$ forms a weakly
null sequence in the space $L^p(\R)$. The main idea is that the double sum 
on the left hand side of \eqref{eq:6.34}, when
considered as an operator acting on $f_n$, 
can be arbitrarily well approximated by finite rank operators.

We now turn to prove the estimate \eqref{eq:6.34}.
We choose a small number $\eta > 0 $ and 
partition the (bounded) set $T \times S_k$ into a finite number
of pairwise joint nonempty subsets $\Omega_1, \dots, \Omega_\nu$ 
such that each $\Omega_i$ has diameter at most $\eta$.
For each $i = 1,\dots, \nu$, we then choose and fix an arbitrary point
$(t_i,s_i) \in \Omega_i$. Similarly to \lemref{lem:d.es.tt}, by
choosing $\eta  >  0 $ small enough  
(depending on $g$, $J$, $\eps$, $N$ and $M$)
we get
\begin{equation}\label{eq:6.36}
\| e_s \ts_t g  - e_{s_i} \ts_{t_i} g\|_{L^p(J)}
\le \frac{\eps}{4M (2N+1)}, \quad (t,s) \in \Omega_i.
\end{equation}

We now estimate the left hand side of \eqref{eq:6.34} as 
\begin{align}
& \Big\| \sum_{t\in\TTT} \sum_{s\in \SSS_k} 
 g_{ts}^*(f_n) e_s \ts_t g \Big\|_{L^p(J)}
\le \Big\| \sum_{i=1}^{\nu} \sum_{(t,s) \in \Omega_i}
 g_{ts}^*(f_n) e_{s_i} \ts_{t_i} g \Big\|_{L^p(J)} \label{eq:6.38.1} \\
& \qquad \qquad + \Big\|   \sum_{i=1}^{\nu} \sum_{(t,s) \in \Omega_i}
 g_{ts}^*(f_n) ( e_s \ts_t g -e_{s_i} \ts_{t_i} g) \Big\|_{L^p(J)}. \label{eq:6.38.2}
\end{align}
The quantity in \eqref{eq:6.38.2} is not greater than
\begin{align}
&  \sum_{i=1}^{\nu} \sum_{(t,s) \in \Omega_i}
 |g_{ts}^*(f_n)|\cdot\|e_s \ts_t g-e_{s_i} \ts_{t_i} g\|_{L^p(J)} \label{eq:6.39.1} \\
& \qquad  \qquad  
\le \frac{\eps}{4M(2N+1) } \sum_{t\in\TTT} \sum_{s\in\SSS_k}
 |g_{ts}^*(f_n)|\le \frac{\eps}{4(2N+1)}, \label{eq:6.39.2}
\end{align}
which follows from \eqref{eq:6.36} and our estimate \eqref{eq:keyest},
 using that $\|f_n\|_p \le 1$.

Next, we can write the term on the right hand side of \eqref{eq:6.38.1} as
\begin{equation}
\label{eq:6.77.6}
 \Big\| \sum_{i=1}^{\nu} \Big( \sum_{(t,s) \in \Omega_i}
 g_{ts}^*(f_n) \Big) e_{s_i} \ts_{t_i} g \Big\|_{L^p(J)}
 \le  \sum_{i=1}^{\nu} \Big| \sum_{(t,s) \in \Omega_i}
 g_{ts}^*(f_n)  \Big| = 
   \sum_{i=1}^{\nu} | h_{i}^*(f_n) |,
\end{equation}
where the inequality in \eqref{eq:6.77.6} holds
since $\|g\|_p = 1$, and where $h_i^*$ are defined by
\begin{equation}
\label{eq:6.41}
h_i^*(f) = \sum_{(t,s) \in \Omega_i}
 g_{ts}^*(f), \quad f \in L^p(\R).
\end{equation}
We observe that due to \eqref{eq:keyest}, the series in \eqref{eq:6.41}
converges absolutely and defines a
bounded linear functional $h_i^*$ on $L^p(\R)$. 
Since $\{f_n\}$ is a weakly null sequence, each summand
on the right hand side of   \eqref{eq:6.77.6} therefore
tends to zero as $n \to \infty$.
Since there are only  a finite number of these summands,
we conclude that
for all sufficiently large $n$ this sum is smaller than 
$\tfrac1{4} (2N+1)^{-1} \eps$. 

It remains to combine this estimate with
\eqref{eq:6.38.1}--\eqref{eq:6.38.2} and
 \eqref{eq:6.39.1}--\eqref{eq:6.39.2}
 in order to arrive at the desired inequality \eqref{eq:6.34}.
 The   estimate for $\| A^N_n \|_p$ is thus proved.

\subsection{The estimate of $B^N_n$}
It therefore remains to show that if $N$ is   sufficiently large 
(independently of $n$)
then $\|B^N_n\|_p \le \eps$. Here we will use our 
assumption that 
$\ft{\pphi}$ has compact support. The fact 
that $\{f_n\}$ is a weakly null sequence in $L^p(\R)$
will not play a role here;  the only property of $f_n$
that we will use is that $\|f_n\|_p\le 1$. 

We start by rewriting the expression \eqref{eq:def.an.bn.3} for $B^N_n$ as 
\begin{equation}
\label{eq:b2.d.1}
B^N_n(x) = \sum_{|k| > N} \sum_{t\in \TTT} \sum_{s\in \SSS_k}
g_{ts}^*(f_n)  \int_\R  g(y-t) e^{2\pi i s y} \pphi(x-y) dy,
\end{equation}
and we observe that the exchange of summation and integration in \eqref{eq:b2.d.1}
is justified using the 
unconditional convergence of the series \eqref{eq:fn.ucs}
in the $L^p(\R)$ norm, and the fact that $\pphi$ is a Schwartz function.

If $I \sbt \R$ is an interval, let $\Delta_I$
be the operator defined 
on Schwartz functions $f$ by
\begin{equation}
\label{eq:hm.def}
(\Delta_I  f)(x) = \int_{I}
\ft{f}(\xi) e^{2 \pi i \xi x} d \xi,
\end{equation}
that is, the ``Fourier partial sum'' operator associated with the interval $I$.
It is well known that $\Del_I$ extends uniquely to a bounded operator
$L^p(\R) \to L^p(\R)$, $1 <  p < \infty$;
this is a consequence
 of Riesz's theorem on the boundedness
of the Hilbert transform on $L^p(\R)$, 
see e.g.\ \cite[Chapter IV, Section 4.1, Theorem 4]{Ste70}.

We will use the following property of the operator
$\Delta_I$: Let  $f \in L^p(\R)$, $1 < p < \infty$, and let
$\psi$ be a Schwartz function with $\supp(\ft{\psi}) \sbt - I$.
Then for every $t \in \R$,
\begin{equation}
\label{eq:h.4}
\int_{\R} f  (y-t)  \psi(y)  dy = 
\int_{\R} (\Del_I f)  (y-t)  \psi(y)  dy.
\end{equation}
Indeed, \eqref{eq:h.4} follows from
\eqref{eq:hm.def} in the case where $f$ is a Schwartz function,
and it extends to all   $f \in L^p(\R)$
by the continuity of
the operator $\Del_I$ on $L^p(\R) $.

We now continue with our proof, again assuming that $p \ge 2$.
By our  assumption, the Fourier transform
 $\ft{\pphi}$ has compact support, so we may choose
$l = l (\pphi) > 0 $  such that $\supp( \ft{\pphi}) \sbt 
[- l +  1, l-1]$. Hence, the function $\psi_{s,x}(y) =  e^{2\pi i s y} \pphi(x-y)$
is a Schwartz function whose Fourier transform is supported
on $[s-l+1, s+l-1]$. In particular, if  $s \in \SSS_k$  then
\eqref{eq:tn.sk.1} implies that the latter
interval is contained in $ [k \del - l, k \del + l]$.
So  applying \eqref{eq:h.4} with $f = g$ and
$\psi = \psi_{s,x}$, we conclude from 
\eqref{eq:b2.d.1} that
\begin{equation}
\label{eq:b2.d.2}
B^N_n(x) = \sum_{|k| > N} \sum_{t\in \TTT} \sum_{s\in \SSS_k}
g_{ts}^*(f_n)  \int_\R  (\Del_{I_k} g ) (y-t) e^{2\pi i s y} \pphi(x-y) dy
\end{equation}
holds with the interval $I_k = [-k \del - l, -k \del + l]$.

We now invoke 
Rubio de Francia's Littlewood--Paley inequality \cite{Rub85},
which can be stated as follows: Let $2 \le p < \infty$,
and suppose that $\{I_k\}$ is a sequence of disjoint intervals on $\R$. 
Then for every $f \in L^p(\R)$, 
\begin{equation}
\label{eq:rdf.1}
\Big\| \Big(\sum_{k} 
|\Del_{I_k} f |^2 \Big)^{1/2} \Big\|_p \le C_p \|f\|_p,
\end{equation}
where $C_p$ is a constant that  depends only on $p$.
(In fact, we will only need a weaker version, for intervals $I_k$ of
equal lengths,  which was proved earlier by L.\ Carleson). 

In our case, we have
$I_k = [-k \del - l, -k \del + l]$, $k \in \Z$, 
which do not form a system of
disjoint intervals. However,
our system $\{I_k\}$ can be partitioned into a finite number
of subsystems, such that each subsystem is composed of disjoint intervals.
Hence, by applying \eqref{eq:rdf.1} to each subsystem, 
 it is easy to infer that 
\begin{equation}
\label{eq:rdf.5}
\Big\| \Big(\sum_{k \in\Z} | \Del_{I_k} g|^2\Big)^{1/2} \Big\|_p 
\end{equation}
is finite. In turn, this implies
(e.g.\ by an application of the dominated convergence theorem)
 that  if we fix an arbitrarily small 
 $\eta> 0 $, then for all sufficiently large $N$ (depending on 
 $g$, $p$, $\del$, $\pphi$ and $\eta$,
but not depending on $n$) we have
\begin{equation}
\label{eq:6.56}
\Big\| \Big(\sum_{|k| > N} | \Del_{I_k}  g |^2\Big)^{1/2} \Big\|_p   \le \eta.
\end{equation}

Now, using the expansion \eqref{eq:b2.d.2}, we can estimate $|B^N_n(x)|$ as 
\begin{align}
&|B^N_n (x) | \le  \sum_{|k| > N} \sum_{t\in \TTT} \sum_{s\in \SSS_k}
|g_{ts}^*(f_n)|  \int_\R  | (\Del_{I_k} g ) (y-t) | \cdot | \pphi(x-y) | dy \label{eq:c1.h.1} \\
& =  \int_\R \Big( \sum_{|k| > N}  | (\Del_{I_k} g ) (y) |  \cdot 
\Big(\sum_{t\in\TTT} \sum_{s\in\SSS_k} 
|g_{ts}^*(f_n)|\cdot |\pphi(x-y-t)|\Big) \Big) dy \label{eq:c1.h.2}  \\
&\le \int_\R \Big( \sum_{|k|>N}  | (\Del_{I_k} g ) (y) |^2 \Big)^{1/2}
 \Big( \sum_{k \in \Z} \Big( \sum_{t\in\TTT} \sum_{s\in\SSS_k}
 |g_{ts}^*(f_n)|\cdot |\pphi(x-y-t)| \Big)^2 \Big)^{1/2} dy. \label{eq:c1.h.3}
\end{align}
As a consequence, if we denote
\begin{equation}
\label{eq:c2.0}
\Phi_{n}(y) =  \Big( \sum_{k\in\Z} 
\Big( \sum_{t\in\TTT} \sum_{s\in\SSS_k} 
|g_{ts}^*(f_n)|\cdot |\pphi(y-t)| \Big)^2 \Big)^{1/2},
\end{equation}
then we get from \eqref{eq:c1.h.1}--\eqref{eq:c1.h.3} 
and the estimate \eqref{eq:6.56} that
\begin{align}
\| B^N_n \|_p & \le \Big\| \Big(\sum_{|k| > N} |  \Del_{I_k}  g |^2\Big)^{1/2} 
\ast \Phi_{n} \Big\|_p \label{eq:c2.1} \\
& \le \Big\| \Big(\sum_{|k|>N} | \Del_{I_k} g|^2\Big)^{1/2}
 \Big\|_p \cdot \| \Phi_{n} \|_1 \le \eta \| \Phi_{n} \|_1. \label{eq:c2.2} 
\end{align}
Hence, it  only remains to show that there is a finite constant $C > 0$
which does not depend on $n$, such that 
\begin{equation}\label{eq:6.62}
 \| \Phi_{n} \|_{1}  \le C, \quad n=1,2,3,\dots.
\end{equation}
Indeed, in this case we may choose $\eta=C^{-1} \eps$, 
and the estimate \eqref{eq:c2.1}--\eqref{eq:c2.2} 
 implies that for all sufficiently large $N$ 
 (independently of  $n$) we have 
 $\|B^N_n\|_p \le \eps$.

The required bound \eqref{eq:6.62} is relatively easy to prove.
Due to \eqref{eq:t.a}, \eqref{eq:c2.0}  we have
\begin{equation}
\label{eq:c3.1}
\Phi_{n}(y) \le \Big( \sum_{k\in\Z} \Big( \sum_{t\in\TTT} \sum_{s\in\SSS_k} 
|g_{ts}^*(f_n)| \Big)^2 \Big)^{1/2}  \sup_{|t| \le a} |\pphi(y-t)|.
\end{equation}
The first term on the right hand side 
does not exceed $M$,
due to our estimate \eqref{eq:keyest} and since
$\|f_n\|_p\le 1$. The second term on the right hand side
of \eqref{eq:c3.1} satisfies
\begin{align}
 \int_{\R}  \Big( \sup_{|t| \le a} |\pphi(y-t)| \Big) dy < + \infty,
\end{align}
since the Schwartz function $\pphi$ has fast decay. 
These observations establish the required estimate \eqref{eq:6.62}, 
and this completes the proof of \thmref{thm:framesp>2}.
\qed

\emph{Remark}. We could have just as well 
considered $U_\pphi$ as an operator
$L^p(\R) \to L^q(\R)$, for any exponent $q$
in the range  $p \le q \le \infty$. The only
necessary change is to replace the inequality 
$\|\pphi \ast \psi \|_p \le \|\pphi\|_1 \|\psi\|_p$
used in the estimates \eqref{eq:an.c.1} and
\eqref{eq:c2.1}--\eqref{eq:c2.2} 
with the more general Young's convolution inequality 
$\|\pphi \ast \psi \|_q \le \|\pphi\|_r \|\psi\|_p$,
where the exponent $r$ is determined by the relation
$p^{-1} + r^{-1} = 1 + q^{-1}$.
In this case, we get that for any weakly null sequence 
$\{ f_n \}$ in $L^p(\R)$, we have
$\|U_\pphi f_n\|_q \to 0$, which again gives
a contradiction.

% =======================================

\section{Balian--Low type result: Nonexistence of nice window functions}
\label{sec:nicewind}

We finally turn to the question of existence of 
unconditional Gabor frames in $L^p(\R)$ such that
the window function $g$ is  ``nice'', i.e.\ enjoying 
certain smoothness and decay properties. 
In this section we prove \thmref{thm:nicegen},
which is a Balian--Low type result giving a negative answer in this direction.

Recall that the \emph{Wiener amalgam space} $W(L^\infty, \ell^1)$, that
we  shortly denote by $W$, consists of all measurable
functions $g$ on $\R$ satisfying
\begin{equation}
\label{eq:am.def.2}
\|g\|_W =\sum_{k\in\Z} \|g|_{[k,k+1]}\|_{\infty} < \infty,
\end{equation}
see e.g.\ \cite[Section 11.4]{Hei11}.
In particular, any function $g \in W$ is bounded and vanishes at infinity.
Moreover, $g \in L^p(\R)$ for any $1 \le p < \infty$.

Let $p \ne 2$, and suppose that $g$ is a continuous function in $W$.
Our goal is to show that if $\Lam \sbt \TTT \times \R$
for some uniformly discrete set $\TTT\subset \R$, then 
there do not exist any 
coefficient functionals $\{ g^*_{ts} \}$ in $(L^p(\R))^*$,
such that the system $\{(e_s \tau_t g, g^*_{ts})\}$, $(t,s) \in \Lam$, 
forms an unconditional Schauder frame in  $L^p(\R)$.

As we mentioned in the introduction, the assumption of continuity of the function $g$
can be relaxed. Our proof below works for any function $g\in W$
which can be uniformly approximated by compactly supported
step functions.

We also note that the proof works in the more general case where 
$\TTT \sbt \R$ is a set of bounded density, i.e.\ a finite union of uniformly discrete sets.

The exponent $p$ will be assumed below to satisfy $p \ne 2$. 
However, note that for $1<p<2$ we proved a stronger
result, namely, \thmref{thm:gabframesp<2}.

The remainder of the section is devoted to the proof of \thmref{thm:nicegen}.

\subsection{}
We start with a simple lemma which shows that the
elements of the space $W$ satisfy a somewhat more general
condition that the definition \eqref{eq:am.def.2}.

\begin{lem}
\label{lem:amalgam}
Let $I \subset \R$ be a bounded interval, 
and let $\TTT\subset\R$ be a finite union of uniformly discrete sets.
Then for any  $g \in W$, 
\begin{equation}
\label{eq:am.w.1}
\sum_{t\in \TTT} \|(\ts_tg)|_{I}\|_{\infty} \le A \|g\|_W,
\end{equation}
where $A$ is a constant that depends on $I$ and $\TTT$,
but does not depend on $g$.
\end{lem}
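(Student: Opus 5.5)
The plan is to reduce to a single uniformly discrete set and then bound each term by a sum of block suprema of $g$, using a counting argument. Since $\TTT$ is a finite union $\TTT = \TTT^{(1)} \cup \dots \cup \TTT^{(m)}$ of uniformly discrete sets, it suffices to prove \eqref{eq:am.w.1} for each $\TTT^{(j)}$ and add the resulting constants. So we assume from now on that $\TTT$ is uniformly discrete, with separation constant $\rho > 0$, and that $I \sbt [-R, R]$ for some positive integer $R$.

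For a fixed $t \in \TTT$, we have $\|(\ts_t g)|_I\|_\infty = \|g|_{I-t}\|_\infty$. The interval $I - t$ is contained in $[-R-t, R-t]$, which is covered by the unit intervals $[k,k+1]$ with $k \in \Z$ and $|k + t| \le R+1$. Hence
\[
\|(\ts_t g)|_I\|_\infty \le \sum_{k \in \Z, \, |k+t| \le R+1} \|g|_{[k,k+1]}\|_\infty .
\]

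Summing over $t \in \TTT$ and exchanging the order of summation gives
\[
\sum_{t\in\TTT} \|(\ts_t g)|_I\|_\infty \le \sum_{k\in\Z} \#\{t \in \TTT : |k+t| \le R+1\} \cdot \|g|_{[k,k+1]}\|_\infty .
\]
The only point needing care is a uniform bound on the counting factor. For each $k$, the set in question consists of points of $\TTT$ lying in the interval $[-k-R-1, -k+R+1]$, which has length $2R+2$. Since the points of $\TTT$ are $\rho$-separated, the number of such points is at most $(2R+2)/\rho + 1$, and this bound does not depend on $k$.

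Therefore \eqref{eq:am.w.1} holds with $A = (2R+2)/\rho + 1$, which depends only on $I$ and $\TTT$. For a finite union of $m$ uniformly discrete sets, the constant is multiplied by at most $m$. There is no genuine obstacle: all the content is in the counting bound above.
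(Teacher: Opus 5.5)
Your argument is correct. Covering $I-t$ by the unit intervals $[k,k+1]$ with $|k+t|\le R+1$ works, the exchange of summation is legitimate because all terms are nonnegative, and the counting bound $(2R+2)/\rho+1$ on the number of points of $\TTT\cap[-k-R-1,-k+R+1]$ is right; for a finite union, you simply add the constants of the individual uniformly discrete pieces. The paper omits this proof as straightforward, and your covering-and-counting argument is the routine verification it has in mind.
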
 

The reader should notice that the definition 
\eqref{eq:am.def.2} of the space $W$ corresponds to
the special case  where $I = [0,1]$ and $\TTT = \Z$
in the condition \eqref{eq:am.w.1}.

Since the proof of \lemref{lem:amalgam} is straightforward, we omit it.

\subsection{}
As before, by adding extra elements to the system 
with zeros as coefficient functionals,
we may assume that $\Lambda = \TTT \times \SSS$ 
where $\TTT\subset\R$ is a  uniformly discrete set (or, more generally, a finite union of uniformly discrete sets), and $\SSS\subset\R$ is an arbitrary countable set.
Assume that $\{(e_s \ts_tg, g_{ts}^*)\}$,
$(t,s) \in \TTT \times  \SSS$, is a $K$-unconditional Schauder frame in the
space $L^p(\R)$. Then every function $f\in L^p(\R)$ admits an expansion
\begin{equation}
\label{eq:f.ex.bl}
f = \sum_{t\in \TTT} \sum_{s\in \SSS} g_{ts}^*(f) e_s \ts_t g,
\end{equation}
where the series is unconditionally convergent in $L^p(\R)$.

Since we assume that
$g$ can be uniformly approximated by compactly supported step functions,
and $g$ is a nonzero function, we can choose and fix 
an interval $I\subset\R$ and a constant $c > 0$ such that $|g(x)|\ge c$
for every  $x\in I$.

For each $t \in T$, we then denote
\begin{equation}
\label{eq:ut.th}
(u_{t} f)(x) = \sum_{s\in \SSS} g_{ts}^*(f) e_s(x), \quad x \in I.
\end{equation}

\begin{lem}
\label{lem:estim_exp}
The series \eqref{eq:ut.th} converges unconditionally in the space $L^p(I)$, and
\begin{equation}
\label{eq:estim_exp}
\| u_t f\|_{L^p(I)}\le M \|f\|_{L^p(\R)}
\end{equation}
where $M$ is a constant which depends neither on $t$ nor on $f$.
\end{lem}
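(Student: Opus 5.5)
The plan is to derive everything from the unconditional expansion \eqref{eq:f.ex.bl}, isolating the single translate $t$. The key point is that $|g|\ge c$ on $I$. By \lemref{lem:unc_constant}, for every finite (or arbitrary) set $F \sbt \SSS$ and each fixed $t\in\TTT$ we have
\begin{equation*}
\Big\| \sum_{s\in F} g_{ts}^*(f)\, e_s \ts_t g \Big\|_p \le K \|f\|_p ,
\end{equation*}
using coefficients $\theta_{t's}=\1_{\{t'=t,\ s\in F\}}$. More generally, the series $\sum_{s\in\SSS} g_{ts}^*(f) e_s \ts_t g$ converges unconditionally in $L^p(\R)$, since it is a subseries of an unconditionally convergent series. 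The difficulty is that the multiplier $\ts_t g$ depends on $t$, so dividing by it requires a lower bound on a set tied to $t$, not to $I$.

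To handle this, I would restrict to $I+t$ instead of $I$. On $I+t$ we have $|g(x-t)|\ge c$. Hence for any finite $F$,
\begin{equation*}
\Big\|\sum_{s\in F} g_{ts}^*(f) e_s\Big\|_{L^p(I+t)} \le c^{-1} K\|f\|_p .
\end{equation*}
Division by $g(x-t)$ is a bounded operator $L^p(I+t)\to L^p(I+t)$ with norm at most $c^{-1}$. Therefore unconditional convergence of $\sum_s g_{ts}^*(f) e_s\ts_t g$ in $L^p(I+t)$ transfers to unconditional convergence of $\sum_s g_{ts}^*(f)e_s$ in $L^p(I+t)$.

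The remaining step is to move from $I+t$ back to $I$, which is where the definition \eqref{eq:ut.th} lives. Translating by $-t$ gives
\begin{equation*}
\Big(\sum_s g_{ts}^*(f) e_s\Big)(x+t)=\sum_s g_{ts}^*(f)e^{2\pi i s t} e_s(x)
\end{equation*}
for $x\in I$. Thus the unimodular constants $e^{2\pi i st}$ appear. For unconditional convergence these do not matter: unconditional convergence is equivalent to convergence of $\sum \theta_s x_s$ for all bounded $\theta$, by \lemref{lem:unc_constant} in the basic-sequence setting, or directly via the bounded multiplier property of unconditionally convergent series in Banach spaces. The norm bound also survives: for unconditionally convergent series, $\|\sum \theta_s x_s\|\le 4\sup_{F}\|\sum_{F}x_s\|$ for $|\theta_s|\le1$ (with constant $2$ for real scalars). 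Applying this with $\theta_s=e^{-2\pi i s t}$ to the series in $L^p(I)$ given by $x\mapsto\sum_s g_{ts}^*(f)e^{2\pi i st}e_s(x)$ yields $u_t f$, convergent unconditionally in $L^p(I)$ with
\begin{equation*}
\|u_tf\|_{L^p(I)}\le 4c^{-1}K\|f\|_p .
\end{equation*}
This gives $M=4K/c$, independent of $t$ and $f$.

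The main obstacle is this last step: making sure the phase factors $e^{2\pi i st}$ do not destroy the bound. A cleaner alternative avoids it entirely by applying \lemref{lem:unc_constant} from the start with $\theta_{t's}=\1_{\{t'=t\}}\,e^{-2\pi i st}\,\1_{F}(s)$, which are admissible coefficients of modulus at most one. Then
\begin{equation*}
\Big\|\sum_{s\in F}g_{ts}^*(f)e^{-2\pi i st}e_s\ts_tg\Big\|_p\le K\|f\|_p ,
\end{equation*}
and after dividing by $g(x-t)$ on $I+t$ and translating back to $I$, this becomes exactly $\|\sum_{s\in F}g_{ts}^*(f)e_s\|_{L^p(I)}\le c^{-1}K\|f\|_p$. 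Unconditional convergence in $L^p(I)$ follows in the same way from the unconditional convergence asserted in \lemref{lem:unc_constant}, via the bounded operator $h\mapsto (h/\ts_t g)(\cdot+t)$ from $L^p(I+t)$ to $L^p(I)$. This yields $M=K/c$.
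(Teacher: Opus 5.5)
Your proposal is correct, and your ``cleaner alternative'' is exactly the paper's proof. The paper takes $\theta_{ts}=e^{-2\pi i st}$ in Lemma~\ref{lem:unc_constant} to get $\|v_t f\|_p\le K\|f\|_p$, then applies the operator $(w_tF)(x)=F(x+t)/g(x)$ from $L^p(\R)$ to $L^p(I)$, which has norm at most $c^{-1}$ and sends the terms of $v_tf$ to those of $u_tf$, giving $M=K/c$. Your first route (restricting to $I+t$, translating back, then removing the phases by the bounded multiplier property) is also valid, but it is a detour that yields the weaker constant $4K/c$.
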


\begin{proof}
Denote $\theta_{ts} = e^{- 2 \pi i st}$. The series
$(v_t f)(x) = \sum_{s\in \SSS} \theta_{ts} g_{ts}^*(f) e_s (x) g(x-t)$
converges unconditionally in $L^p(\R)$, and satisfies
$\|v_t f\|_p \le K \|f\|_p$. Let us consider an operator
$w_t: L^p(\R) \to L^p(I)$ defined by
$(w_t f)(x) = f(x+t) / g(x)$, $x \in I$.
Then  $w_t$ is a bounded operator,
$\|w_t\| \le c^{-1}$, and $w_t$ maps the terms of the series
$v_t f$  to the corresponding terms of the series $u_t f$.
This implies the assertion of the lemma
with $M = K c^{-1}$.
\end{proof}

\emph{Remark}. A similar argument shows
that the series \eqref{eq:ut.th} also
converges unconditionally in $L^p$ on any translate
of  $I$. Hence, the series converges unconditionally locally in $L^p$.

\subsection{}
Let $\eta = (4M)^{-1}$, where $M$ is the constant from \eqref{eq:estim_exp}. 
It follows from \lemref{lem:amalgam} that  we can
choose a sufficiently large $r  >0$ such that
\begin{equation}
\label{eq:am.w.2}
\sum_{|t| > r} \| (\ts_tg)|_I \|_{\infty} < \eta,
\end{equation}
where, here and below, $t$ runs through the elements of the set $\TTT$.  We have
\begin{equation}
 \Big\|   \sum_{|t|>r}  \sum_{s\in \SSS}  
g_{ts}^*(f) e_s  \ts_t g \Big\|_{L^p(I)}
\le  \sum_{|t|>r} \|  (u_t f) \cdot (\ts_t g) \|_{L^p(I)} 
\le \sum_{|t| > r} \| (\ts_tg)|_I \|_{\infty}  \|u_t f \|_{L^p(I)}.
\end{equation}
Using the estimates \eqref{eq:estim_exp} and \eqref{eq:am.w.2},
this implies that
\begin{equation}
\label{eq:bl.est.1}
 \Big\|   \sum_{|t|>r}  \sum_{s\in \SSS}  g_{ts}^*(f) e_s  \ts_t g \Big\|_{L^p(I)}
\le M \eta \|f\|_p = (1/4) \|f\|_p
\end{equation}
for every $f \in L^p(\R)$.

\subsection{}
Since $\TTT$ is a finite union of uniformly discrete sets, the
set $\TTT \cap [-r,r]$ is finite. The number of
elements of this set will be denoted by $N$. Let $\eps = (4MN)^{-1}$,
then by our assumption, we can 
find a compactly supported step function $h$ such that 
\begin{equation}
\label{eq:cr.5} 
\|g - h \|_{L^\infty(\R)} < \eps.
\end{equation}
We have
\begin{equation}
\label{eq:cr.1} 
 \Big\|   \sum_{|t| \le r}  \sum_{s\in \SSS}  
g_{ts}^*(f) e_s  \ts_t (g -  h) \Big\|_{L^p(I)} 
\le  \sum_{|t| \le r} \|  (u_t f) \cdot \ts_t (g - h) \|_{L^p(I)}.
\end{equation}
Due to \eqref{eq:estim_exp} and \eqref{eq:cr.5}, 
each summand on the right hand side of \eqref{eq:cr.1} 
does not exceed $M \eps$, and there are $N$ such 
summands. Hence,
\begin{equation}
\label{eq:bl.est.2}
 \Big\|   \sum_{|t| \le r}  \sum_{s\in \SSS}  
g_{ts}^*(f) e_s  \ts_t (g -  h) \Big\|_{L^p(I)} 
\le MN \eps \|f\|_p = (1/4) \|f\|_p,
\end{equation}
which again holds for every $f \in L^p(\R)$.

\subsection{}
Now, for each $f \in L^p(\R)$, we denote
\begin{equation}
\label{eq:sf.op.bl}
Sf =   \sum_{|t| \le r}  \sum_{s\in \SSS}  
g_{ts}^*(f) e_s   \ts_t h.
\end{equation}
This series converges unconditionally in the space $L^p(I)$,
due to \lemref{lem:estim_exp} and the fact that
$h$ is a bounded function.
Let us estimate the quantity
$\|f - Sf\|_{L^p(I)}$.
From the expansions \eqref{eq:f.ex.bl},  \eqref{eq:sf.op.bl}, 
and using the estimates
\eqref{eq:bl.est.1}, \eqref{eq:bl.est.2}, we get
\begin{align}
& \|f - S f \|_{L^p(I)} \le 
 \Big\|   \sum_{|t|>r}  \sum_{s\in \SSS}  
g_{ts}^*(f) e_s  \ts_t g \Big\|_{L^p(I)} \label{eq:sf.op.es.1} \\
&    \qquad  + 
\Big\|  \sum_{|t| \le r}  \sum_{s\in \SSS}  
g_{ts}^*(f) e_s  \ts_t (g -  h) \Big\|_{L^p(I)} 
\le  (1/2) \|f\|_p. \label{eq:sf.op.es.2} 
\end{align}

\subsection{}
The elements $\{\tau_t h\}$, $t \in \TTT \cap [-r,r]$, form a finite
system of  compactly supported step functions.
Hence, we can find a subinterval $J \sbt I$ such that each
one of these functions is constant on $J$, say,
$(\tau_t h)(x) = c_t$ for every $x \in J$, 
where $c_t$ is a constant.

Let us consider the space $L^p(J)$ as a closed subspace of
$L^p(\R)$, by extending each  function $f \in L^p(J)$ to
be zero outside the interval $J$.
For each $s \in \SSS$, we denote
\begin{equation}
\label{eq:bl.hts.1}
h_{s}^*(f) = \sum_{|t| \le r} c_t g_{ts}^*(f).
\end{equation}
Then, if we restrict the series
\eqref{eq:sf.op.bl} to the interval $J$, we obtain
\begin{equation}
(Sf)(x) = \sum_{s\in \SSS}   h_{s}^*(f) e_s(x), \quad x \in J,
\end{equation}
where the latter series converges unconditionally in $L^p(J)$.
Since $\{h_{s}^*\}$, $s \in \SSS$, are continuous linear functionals
on the space $L^p(J)$, the estimate \eqref{eq:sf.op.es.1}--\eqref{eq:sf.op.es.2}
therefore implies that the system
$\{ (e_s, h_s^*) \}$, $s \in \SSS$, forms
an unconditional approximate Schauder frame in $L^p(J)$.
In turn, by an application of  \lemref{lem:approxframe},
we conclude that there exist functionals $e_s^* \in (L^p(J))^*$ 
such that the system
$\{ (e_s, e_s^*) \}$, $s \in \SSS$, forms
an unconditional Schauder frame in the space $L^p(J)$.

The last conclusion now  provides  us with the desired contradiction: we 
recently proved \cite[Theorem 1.2]{LT25b} that if $p \neq 2$,
then there does not exist 
any unconditional Schauder frame consisting of exponential
functions in the space $L^p(\Om)$, for any open set
$\Om \sbt \R$ of finite measure.
This contradiction concludes the proof of \thmref{thm:nicegen}.
\qed

% =======================================

\end{document}